\documentclass[11pt]{amsart}
\usepackage{amssymb, latexsym, mathrsfs,   color, amsmath }
\usepackage[all]{xy}
\usepackage[colorlinks=true, pdfstartview=FitV,linkcolor=blue,citecolor=blue,urlcolor=blue]{hyperref}
\usepackage{chngcntr}
\counterwithin{figure}{section}

%
%
    \hfuzz 30pt
    \vfuzz 30pt


    \topmargin 0pt
    \advance \topmargin by -\headheight
    \advance \topmargin by -\headsep

    \textheight 8.8in

    \oddsidemargin 0pt
    \evensidemargin \oddsidemargin
    \marginparwidth 0.5in

    \textwidth 6.5in

\setlength{\parskip}{0.3cm}

\setlength{\parskip}{0.2cm}
\newtheorem {theorem}    {Theorem}[section]

\newtheorem {lemma}      [theorem]    {Lemma}
\newtheorem {corollary}  [theorem]    {Corollary}
\newtheorem {proposition}[theorem]    {Proposition}

\newcommand{\bb}{\mathbb}
\renewcommand{\rm}{\mathrm}

\newcommand{\cal}{\mathcal}
\newcommand{\GG}{\mathrm{G}}
\newcommand{\GGL}{\mathrm{GL}}
\newcommand{\UU}{\mathrm{U}}
\renewcommand{\sp}{\mathrm{Sp}}

\newcommand{\so}{\mathrm{SO}}
\newcommand{\Fq}{\bb{F}_q}
\newcommand{\fq}{(\bb{F}_q)}

\newcommand{\fqq}{(\bb{F}_{q^2})}
\newcommand{\Fqq}{\bb{F}_{q^2}}
\theoremstyle{definition}
\newtheorem{definition}[theorem]{Definition}

\newcommand{\CD}{{\mathcal{D}}}
\newcommand{\CQ}{{\mathcal{Q}}}

\numberwithin{equation}{section}

\newcommand{\fg}{\mathfrak{g}}
\newcommand{\co}{\mathcal{O}}
\newcommand{\wco}{\widetilde{\mathcal{O}}}

\begin{document}

\title{Wavefront sets and descent method for finite unitary groups}

\date{\today}

\author[Zhifeng Peng]{Zhifeng Peng}

\address{School of Mathematical Science, Soochow University, Suzhou 310027, Jiangsu, P.R. China}
\email{zfpeng@suda.edu.cn}

\author[Zhicheng Wang]{Zhicheng Wang$^*$}

\address{School of Mathematical Science, Soochow University, Suzhou 310027, Jiangsu, P.R. China}

\email{11735009@zju.edu.cn}
\subjclass[2010]{Primary 20C33; Secondary 22E50}

\begin{abstract}
Let $G$ be a connected reductive algebraic group defined over a finite field $\Fq$. In the 1980s, Kawanaka introduced the generalized Gelfand-Graev representations (GGGRs for short) of the finite group
$G^F$ in the case where $q$ is a power of a good prime for $G^F$. An essential feature of GGGRs is that they are very closely related to the (Kawanaka) wavefront sets of the irreducible representations $\pi$ of $G^F$. In \cite[Theorem 11.2]{L7}, Lusztig showed that if a nilpotent element $X\in G^F$ is ``large'' for an irreducible representation $\pi$, then the representation $\pi$ appears with ``small'' multiplicity in the GGGR associated to $X$. In this paper, we prove that for unitary groups, if $X$ is the wavefront of $\pi$, the multiplicity equals one, which generalizes the multiplicity one result of usual Gelfand-Graev representations. Moreover, we give an algorithm to decompose GGGRs for $\UU_n\fq$ and calculate the $\UU_4\fq$ case by this algorithm.
\end{abstract}

\maketitle

\section{Introduction}

Let $\overline{\mathbb{F}}_q$ be an algebraic closure of a finite field $\mathbb{F}_q$, which is of characteristic $p>2$. Consider a connected reductive algebraic group $G$ defined over $\Fq$ and its Lie algebra $\fg$, with Frobenius map $F$. Let $G^F$ be the group of $\Fq$-points of $G$. Let $Z$ be the center of $G$. We will assume that $q$ is large enough such that the main theorem in \cite{S} and the Jacobson-Morozov
theorem hold.
   Let $H$ be a subgroup of $G$. Let $\pi$ (resp. $\sigma$) be a representation of $G^F$ (resp. $H^F$). We write
\[
\langle \pi,\sigma \rangle_{H^F} = \dim \mathrm{Hom}_{H^F}(\pi,\sigma ).
\]
If $G=H$, we write $\langle \pi,\sigma \rangle$ instead of $\langle \pi,\sigma \rangle_{H^F}$ for short.
By abuse of notation, for a representation $\pi$, we also denote its character by $\pi$ when no confusion arises.

In the representation theory of finite groups of Lie type, Gelfand-Graev representations have become extremely useful in various contexts, especially for constructing cuspidal representations. For example, the Gelfand-Graev representation of $\GGL_n\fq$
have a multiplicity-free decomposition into generic representations. In particular, every cuspidal representation appears in the decomposition. Then we can construct modules of cuspidal representations via the Gelfand-Graev representation of $\GGL_n\fq$. However, in general, the Gelfand-Graev representation does not contain all cuspidal representations. It leads us to study a generalization of Gelfand-Graev representation.

 Generalized Gelfand-Graev representations (or GGGRs for abbreviation) of the finite group were introduced firstly by Kawanaka \cite{K1, K2, K3}, when $q$ is a power of a good prime for $G$. His original purpose is to prove Ennola's conjecture. Now, GGGRs have been extremely useful in many other contexts of representation theory; see a survey \cite{Ge}.
We may associate a nilpotent element $X \in \fg^F$ with a ${\mathfrak{sl}}$-2 triple $\gamma=\{X,H,Y\}$,
\[
[H,X]=2X,\qquad[H,Y]=-2Y,\qquad[X,Y]=H
\]
and a representation $\Gamma_\gamma$ of $G^F$
called the associated GGGR (see \cite[Section 2]{L7} or section \ref{sec5} for details). Unlike usual Gelfand-Graev representations, GGGRs no longer have multiplicity-free decomposition. In \cite[Theorem 11.2]{L7}, Lusztig showed that if a nilpotent element $X$ is ``large'' for an irreducible representation $\pi$, then the representation $\pi$ appears with ``small'' multiplicity in the GGGR $\Gamma_\gamma$ associated to $X$. The ``largest'' nilpotent element $X$
gives us an important invariant, which is the so-called ``wavefront set''. However, the multiplicity has not been worked out even in the ``largest'' case, the wavefront set case.

Howe firstly introduced the notion of the wavefront set in 1981 \cite{H2} for archimedean local fields. In the finite case, wavefront sets were first introduced by Kawanaka. Let $\pi$ be an irreducible representation of $G^F$. The \emph{Kawanaka wavefront set} $\wco$
of $\pi$ is defined to be an $F$-stable nilpotent orbit $\wco$ satisfying
 \begin{itemize}
 \item[] (WF1)  $\langle \pi,\Gamma_\gamma\rangle\ne 0$ for some $\gamma=\{X,H,Y\}$ with $X\in \wco^F$;
\item[] (WF2) $\langle \pi,\Gamma_{\gamma'}\rangle\ne 0$ for some $\gamma'=\{X',H',Y'\}$ with $X'\in \wco^{\prime F}$ implies $\rm{dim}\wco'\le\rm{dim}\wco$.
\end{itemize}
If $\gamma$ is trivial, we have $\langle \pi,\Gamma_\gamma\rangle= \rm{dim}\pi\ne 0$; hence the existence of a wavefront set of an irreducible representation is obvious. Moreover, for each irreducible representation, the uniqueness of the wavefront set was conjectured by Lusztig \cite{L2} and Kawanaka \cite{K2}, respectively. If this conjecture holds, we say the wavefront set of $\pi$ is well defined. When $p$ is sufficiently large, Lusztig \cite{L7} has shown that the wavefront sets are well defined and coincide with the ``dual'' of the unipotent
support. The existence of the unipotent support was established for $p$ good by Geck \cite{Ge1}, and for any $p$ by Geck and Malle, \cite{GM}.
Kawanaka also conjectured a description of the wavefront sets via Lusztig correspondence. This conjecture was proved by Lusztig \cite{L7} and was solved in the later work of Achar and Aubert \cite{AA} and Taylor \cite{T} with a geometric refinement of the condition (WF2).
 This result implies a deep connection between the irreducible representations of $G^F$ and the geometry of the algebraic group $G$.

In the p-adic case, the study of wavefront sets is related to the theory of automorphic forms (via their Fourier coefficients) and has found important applications in both areas. For example see \cite{Sh1,NPS,K2,Y,GRS,GRS2,JZ1,JZ2,JLZ}. The study of wavefront sets in the finite case is fascinating by itself but is also useful to the study of the p-adic case. Indeed, when the residue characteristic of a p-adic field $k$ is zero or ``sufficiently large'', DeBacker \cite{D2} parametrizes the set of $k$-rational nilpotent orbits of $G$ by equivalence classes of objects coming from the Bruhat-Tits building of the group. For p-adic fields, wavefront sets have the following variants: analytic, algebraic, geometric, unramified, and arithmetic wavefront sets (see \cite{D1, H, HC, JLZ, MW, O}). We briefly explain these notions in our previous paper \cite{PW}.

This paper is motivated by D, Jiang, D, Liu, and L Zhang's work \cite{JZ1, JZ2, JLZ} on arithmetic wavefront sets. In \cite{JZ1}, D. Jiang and L. Zhang proposed a conjecture of wavefront sets. They suggested that one can obtain the wavefront set of an irreducible representation in a generic L-packet via the local descent method in the local fields case. Recently, D. Jiang, D. Liu, and L. Zhang \cite{JLZ} define the arithmetic wavefront set of certain irreducible admissible representation $\pi$ of a classical group over the local field by the arithmetic structures of the enhanced L-parameter of $\pi$. Their definition of the arithmetic wavefront set is based on the rationality of the local Langlands correspondence and the local Gan-Gross-Prasad conjecture. They also prove that the arithmetic wavefront set is an invariant of $\pi$ (it is independent of the choice of the Whittaker datum \cite[Theorem 1.1]{JLZ}), and propose several conjectures to describe the relationship between the arithmetic wavefront sets, the analytic wavefront sets and the algebraic wavefront sets. To be more precise, the set of the arithmetic wavefront set coincides with the set of the analytic wavefront set and the algebraic wavefront set.

 In this paper, we shall prove some cases of finite fields analogy of their conjectures by known results of the finite Gan-Gross-Prasad problem \cite{LW2,LW4,Wang2}, and calculate Kawanaka wavefront sets for irreducible representations of $\UU_n\fq$. Moreover, we shall prove a multiplicity one result of GGGRs which generalizes the multiplicity one result of usual Gelfand-Graev representations. One of our main tools is the descent method of D. Ginzburg, S. Rallis and D. Soudry \cite{Gi,GRS, GRS2}, which calculates the wavefront set of automorphic cuspidal representation. In our previous, we used this method to deal with the finite symplectic groups case $\sp_{2n}\fq$. We get Kawanaka wavefront sets of some crucial irreducible representations of $\sp_{2n}\fq$, but not all irreducible representations. In this paper, we shall get the complete results for all irreducible representations of $\UU_n\fq$ by combining the descent method with a finite field analog of the theta correspondence argument in \cite{GZ, Z}.

  Generalised Gelfand-Graev representations of $\GGL_n\fq$ were explicitly described by S. Andrews and N. Thiem \cite{AT}. Rainbolt studies
the GGGs of $\UU_3\fq$ in \cite{Ra}. Thiem and  Vinroot studied the degenerate Gelfand-Graev for $\UU_n\fq$ in \cite{TV}.  In the general case, little is known about GGGRs. In this paper, we provide a new approach to calculating GGGRs.

 \subsection{main results}

Let $G$ be a connected reductive algebraic group defined over $\Fq$ with Frobenius map $F$.
Let $G^*$ be the dual group of $G$. We still denote the Frobenius endomorphism of $G^*$ by $F$.
For a semisimple element $s \in G^{*F}$, we define Lusztig series as follows:
\[
\mathcal{E}(G^F,s) = \{ \pi \in \rm{Irr}(G^F)  :  \langle \pi, R_{T^*,s}^G\rangle \ne 0\textrm{ for some }T^*\textrm{ containing }s \}
\]
where $T^*$ is an $F$-stable maximal torus of $G^*$ and $R_{T^*,s}^G$ is the Deligne-Lusztig correspondence to pair $(T^*,s)$.
And
\[
\rm{Irr}(G^F)=\coprod_{(s)}\mathcal{E}(G^F,s)
\]
where $(s)$ runs over the conjugacy classes of semisimple elements. Moreover, there is a bijection (Lusztig correspondence)
\[
\mathcal{L}_s:\mathcal{E}(G^F,s)\to \mathcal{E}(C_{G^{*F}}(s),1).
\]
The bijection $\mathcal{L}_s$ is uniquely determined in the unitary groups case. Let $a\in \overline{\mathbb{F}}_q^*$ and $[a]:=\{a^{{(-q)}^k}|k\in\bb{Z}\}$. The centralizer $C_{G^{*F}}(s)$ are of form
\[
C_{G^{*F}}(s)=\prod_{[ a] }G^{*F}_{[ a]}(s),
\]
where $G^{*F}_{[ a]}(s)$ is either a general linear group or unitary group (see subsection \ref{sec2.1}). Then there is a bijection
\[
\mathcal{L}_s:\mathcal{E}(G^F,s)\to \prod_{[a]}\mathcal{E}(G^{*F}_{[ a]}(s),1).
\]
The classification of the representations in $\mathcal{E}(G^{*F}_{[ a]}(s),1)$ was given by Lusztig and Srinivasan in \cite{LS}. So we can associate each $\pi[a]$ with a partition $\lambda[a]$, and we obtained the following classification of irreducible representations:
\[
\begin{matrix}
{\rm {Irr}}(\UU_n\fq)&\longrightarrow& \left\{\cal{L}:\cal{S}\to\cal{P}\Big|\sum_{[a]}\#[a]\cdot|\cal{L}([a])|=n\right\}\\
\\
\pi&\longrightarrow& \cal{L}_\pi
\end{matrix}
\]
where $\cal{S}$ is the set of $[a]$ and $\cal{P}$ is the set of partitions, and $\cal{L}_\pi(a)=\lambda[a]$ for each $[a]$.

For an irreducible representation $\pi$, we can associate it with a unique array ${\ell}{(\pi)}$ called descent index via the descent method in Section \ref{sec5}, and we prove the array ${\ell}{(\pi)}$ is a partition. In this paper, a partition means an array $\lambda=(\lambda_i)$ with $\lambda_i\ge \lambda_{i+1}$ for every $i$. We denote by $\lambda^t=(\lambda^t_1,\lambda^t_2,\cdots)$ the transpose of $\lambda$. Recall that the $F$-stable nilpotent orbits of a unitary group $\UU_n$ are parameterized by partitions of $n$, and there is a single $F$-rational nilpotent orbit in each $F$-stable nilpotent orbit. For such partition $\lambda$, we denote the corresponding $F$-stable nilpotent orbit by $\widetilde{\co}_{\lambda}$. We still denote the unique $F$-rational nilpotent orbit in $\widetilde{\co}_{\lambda}$ by $\co_{\lambda}$.

\begin{theorem}\label{main1}
 Let $\pi\in\rm{Irr}(\UU_n\fq)$, $\ell(\pi)=(\ell_1,\ell_2,\cdots)$ be the descent index of $\pi$ (see section \ref{sec5} for detail). Then we have
 \[
 \ell_i=\sum_{[a]}\#[a]\cdot\cal{L}_\pi([a])^t_i
 \]
 and
  \[
 \rm{dim}\rm{Wh}_{\co_{\ell(\pi)}}(\pi)=1,
 \]
 where $\rm{Wh}_{\co_{\ell(\pi)}}(\pi)$ is the generalized Whittaker model of $\pi$ with respect to $F$-rational nilpotent orbit $\co_{\ell(\pi)}$. Moreover, the $F$-stable nilpotent orbit by $\widetilde{\co}_{\ell(\pi)}$ is the wavefront set of $\pi$.

\end{theorem}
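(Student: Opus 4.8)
The plan is to establish the three assertions simultaneously by induction on $n$, running the descent construction of Section \ref{sec5} to pass from $\UU_n\fq$ to a smaller unitary group, and feeding it two external inputs: the explicit solution of the finite Gan--Gross--Prasad problem for unitary groups \cite{LW2,LW4,Wang2}, which controls one descent step of odd codimension, and a finite-field analogue of the theta-correspondence argument of \cite{GZ,Z}, which handles the steps of even codimension.

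First I would unwind the definition of $\ell(\pi)$. By construction in Section \ref{sec5}, $\ell_1$ is characterized as the largest integer $k$ for which a Bessel-type period (when $k$ is odd) or a Fourier--Jacobi-type period (when $k$ is even) pairs $\pi$ nontrivially with a representation of $\UU_{n-k}\fq$; writing $\mathcal{D}(\pi)$ for the resulting representation of $\UU_{n-k}\fq$, one has $\ell(\pi)=(\ell_1,\ell(\mathcal{D}(\pi)))$, so induction on $n$ applies once a single step is understood. Concretely I need: (a) $\ell_1=\sum_{[a]}\#[a]\cdot\mathcal{L}_\pi([a])^t_1$; (b) the distinguished component $\sigma_0$ of $\mathcal{D}(\pi)$ has Lusztig datum obtained from $\mathcal{L}_\pi$ by deleting the first column of each partition $\mathcal{L}_\pi([a])$; and (c) among the components of $\mathcal{D}(\pi)$ only $\sigma_0$ survives to the next layer of the generalized Whittaker model, each layer of which is a multiplicity-one period.

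The heart of the argument is (a)--(c). For an odd-codimension step, the nonvanishing of the Bessel period and the shape of the descended Lusztig datum can be read off from the unitary branching laws of \cite{LW2,LW4,Wang2}, which phrase the condition ``$\langle\pi,\sigma\rangle\ne0$'' directly in terms of the partitions attached to $\pi$ and $\sigma$ by the Lusztig correspondence; the first occurrence is the extremal case of these laws, and extremality forces precisely the column-deletion rule. For an even-codimension step, one composes the Fourier--Jacobi period with a theta lift for a unitary dual pair and uses the compatibility of the Lusztig correspondence with finite theta lifts — the finite-field counterpart of \cite{GZ,Z} — to reduce to an odd-codimension problem for the theta lift; tracking Lusztig data through both maps again produces the column-deletion rule. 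The main obstacle I anticipate is the bookkeeping across the centralizer factors $G^{*F}_{[a]}(s)$ — general linear versus unitary, together with the parity constraints that single out which factors are ``active'' at a given step — and checking that the column-deletion description, and the survival statement (c), hold uniformly over all of them; this, rather than any individual step, is where the real work lies. Granting (a)--(c), induction gives $\ell_i=\sum_{[a]}\#[a]\cdot\mathcal{L}_\pi([a])^t_i$, and since each $\mathcal{L}_\pi([a])^t$ is a partition we get $\ell_i\ge\ell_{i+1}$, so $\ell(\pi)$ is a partition; it is a partition of $n$ because $\sum_i\ell_i=\sum_{[a]}\#[a]\cdot|\mathcal{L}_\pi([a])|=n$.

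For the multiplicity one statement, the construction of Section \ref{sec5} computes $\mathrm{Wh}_{\co_{\ell(\pi)}}(\pi)$ layer by layer along the descent tower cut out by the partition $\ell(\pi)$; each layer being a multiplicity-one Bessel or Fourier--Jacobi period for finite unitary groups, and only the distinguished component passing to the next layer by (c), we obtain $\dim\mathrm{Wh}_{\co_{\ell(\pi)}}(\pi)=1$. In particular this gives $\langle\pi,\Gamma_\gamma\rangle\ne0$ for $\gamma$ supported on $\co_{\ell(\pi)}$, which is condition (WF1). For (WF2) I would appeal to the known description of Kawanaka wavefront sets via the Lusztig correspondence (Lusztig \cite{L7}, Achar--Aubert \cite{AA}, Taylor \cite{T}): the wavefront set is the dual of the unipotent support, and for unitary groups both are determined by $\mathcal{L}_\pi$; comparing that recipe with the formula just proved identifies the wavefront set with $\widetilde{\co}_{\ell(\pi)}$. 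Alternatively, (WF2) follows directly from the maximality built into the construction of $\ell_1$: being the \emph{largest} admissible codimension, $\ell_1$ bounds the dimension of any nilpotent orbit supporting a nonzero GGGR multiplicity for $\pi$, and iterating this bound along the tower yields $\dim\widetilde{\co}'\le\dim\widetilde{\co}_{\ell(\pi)}$ whenever $\langle\pi,\Gamma_{\gamma'}\rangle\ne0$.
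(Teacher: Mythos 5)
There is a genuine gap at the technical heart of the argument. You assert that ``the construction of Section \ref{sec5} computes $\rm{Wh}_{\co_{\ell(\pi)}}(\pi)$ layer by layer along the descent tower,'' but this is not a definition and not a formality: the generalized Whittaker model attached to the full orbit $\co_{\ell(\pi)}$ is defined through a single $\frak{sl}_2$-triple and the character $\psi$ on $G_{\ge 1.5}^F$, whereas your tower is an iterated composition of Bessel/Fourier--Jacobi models for the hook orbits $\co_{(\ell_i,1,\dots,1)}$. The identity
\[
\rm{dim}\,\rm{Wh}_{\co_{(\ell_1,\ell_2,\dots)}}(\pi)\;=\;\rm{dim}\,\rm{Wh}_{\co_{(\ell_2,\ell_3,\dots)}}\bigl(\rm{Wh}_{\co_{(\ell_1,1,\dots,1)}}(\pi)\bigr)
\]
is exactly the content of the paper's ``exchanging roots'' Lemma \ref{ex3} (Section \ref{sec6}), proved by the Ginzburg--Rallis--Soudry root-exchange technique \cite{GRS,GRS2} in the even case and by the finite-field Gomez--Zhu theta argument (Proposition \ref{key}, built on Theorem \ref{gz}) in the odd case. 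Your proposal uses the theta correspondence only to compute Fourier--Jacobi branching multiplicities (the analogue of Theorem \ref{a1}), which is a different use; nothing in your outline supplies the tower-versus-orbit identification, and without it neither $\rm{dim}\,\rm{Wh}_{\co_{\ell(\pi)}}(\pi)=1$ nor the wavefront identification follows from the branching laws of \cite{LW2,LW4,Wang2}. (Your steps (a)--(c), by contrast, do match the paper: Corollary \ref{d1} shows the first descent is a single irreducible representation whose Lusztig datum is obtained by deleting the first column of each $\cal{L}_\pi([a])$, so your ``distinguished component'' bookkeeping is handled there.)

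A secondary problem is the wavefront claim. Your ``direct'' verification of (WF2) from the maximality of $\ell_1$ does not work as stated: maximality bounds only $\lambda_1$ for a supporting orbit $\co_\lambda$ (and even that already needs Lemma \ref{ex3}); to bound all partial sums $\sum_{i\le k}\lambda_i$ you would have to control the descent indices of every irreducible constituent of the non-maximal descent $\rm{Wh}_{\co_{(\lambda_1,1,\dots,1)}}(\pi)$, which is reducible when $\lambda_1<\ell_1$, and you give no argument for that. The paper avoids this by invoking the existence of the wavefront set together with Taylor's geometric refinement (WF$2'$) \cite{T}: since $\rm{Wh}_{\co_{\ell(\pi)}}(\pi)\ne 0$, one gets $\ell(\pi)\le\lambda$, and Lemma \ref{ex3} plus the maximality of $\ell_1$ forces $\lambda_1=\ell_1$, whence induction gives $\lambda=\ell(\pi)$. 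Your alternative appeal to the Lusztig--Achar--Aubert--Taylor description of Kawanaka wavefront sets via the Lusztig correspondence could in principle give another route, but the combinatorial comparison with $\sum_{[a]}\#[a]\cdot\cal{L}_\pi([a])^t$ is not carried out, so as written this part is also incomplete.
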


In our previous work, we calculated the wavefront sets for some irreducible representations of finite symplectic groups. However, we can not get the wavefront set of each irreducible representation by the descent method. The descent indexes might not even be a partition. The reason is that  we only have the Fourier-Jacobi descent in the symplectic groups case, which gives us descent indexes only consisting of even integers. However, many irreducible representations with wavefront sets correspond to a partition consisting of some odd integers. For example, the wavefront set of the trivial representation of symplectic groups corresponds to the partition $(1,1,\cdots,1)$. But the descent index of the trivial representation is $(0,2,0,2\cdots,0,2)$. Fortunately, the descent index is not far from the wavefront set. We can get the partition of the wavefront set for some irreducible representations by ``taking the average of two adjacent numbers of the descent index'' \cite{PW}. In the unitary groups case, we have both the Fourier-Jacobi and Bessel descent. So the descent indexes would consist of both odd and even integers. Roughly speaking, in this paper, we would consider one of the Fourier-Jacobi descent and the Bessel descent, which lives in a smaller group, and call it for the first descent. The first descent is always an irreducible representation of a unitary group which is different from the symplectic groups case. The irreducibility is crucial in our proof and implies to the multiplicity one result in Theorem \ref{main1}. Actually, consider the trivial representation symplectic group, the Fourier-Jacobi descent is not irreducible. That's why ``$(0,2)$'' appears in the descent index instead of $(1,1)$. However, in the unitary case, the first descent of the trivial character of $\UU_n\fq$ is the trivial character of $\UU_{n-1}\fq$ and the descent index is precisely the partition $(1,1\cdots,1)$.

Let us outline the strategy of the proof. For a nilpotent orbit $\co_\lambda=\co_{(\lambda_1,\cdots,\lambda_k)}$ of $\UU_n\fq$, consider the following nilpotent orbits:
\[
\begin{array}{ccccc}
\co_1:=&\co_{(\lambda_1,1,1,\cdots,1)}&\subset& \UU_{n_1}:=&\UU_n\fq\\
\co_2:=&\co_{(\lambda_2,1,1,\cdots,1)}&\subset &\UU_{n_2}:=&\UU_{n-\lambda_1}\fq\\
\co_3:=&\co_{(\lambda_3,1,1,\cdots,1)}&\subset& \UU_{n_3}:=&\UU_{n-\lambda_1-\lambda_2}\fq\\
\vdots&\vdots&&\vdots\\
\co_k:=&\co_{(\lambda_k)}&\subset& \UU_{n_k}:=&\UU_{\lambda_k}\fq\\
\end{array}.
\]
Since the stabilizer of the orbits $\co_i$ in $\UU_{n_i}\fq$ is the next unitary group $\UU_{n_{i+1}}\fq$, the generalized Whittaker model
$
 \rm{Wh}_{\co_1}( \pi  )
$
is a representation of $\UU_2\fq$. Then we can define the generalized Whittaker model of generalized Whittaker model
\[
  \rm{Wh}_{\co_2}( \rm{Wh}_{\co_1}( \pi  ))
\]
and inductively
\[
  \rm{Wh}_{\co_i}( \cdots \rm{Wh}_{\co_2}( \rm{Wh}_{\co_1}( \pi  ))).
\]
We shall apply a finite field analog of the root exchanging technique of Ginzburg-Rallis-Soudry \cite{GRS, GRS2} to prove the following equation (Section \ref{sec6}):
\[
 \rm{dim}\rm{Wh}_{\co_\lambda}(\pi)= \rm{dim} \rm{Wh}_{\co_k}( \cdots \rm{Wh}_{\co_2}( \rm{Wh}_{\co_1}( \pi  )  )  ).
\]
Therefore, we only have to calculate this kind of generalized Whittaker model:
\[
 \rm{Wh}_{\co_{\ell,1,1,\cdots,1}}( \pi  ).
\]
Then the calculation of this kind of generalized Whittaker model has been completed in previous works \cite{LW3, Wang2}. In particular, if $\ell$ is the first occurrence index of $\pi$ (see Section \ref{sec5}), then the generalized Whittaker model $ \rm{Wh}_{\co_{\ell,1,1,\cdots,1}}( \pi  )$ is irreducible, which brings us to the multiplicity one result. In general, for an arbitrary nilpotent orbit $\co_\lambda=\co_{\lambda_1,\cdots,\lambda_k}$, the root exchanging technique still works even if $\lambda_i$ is not the first occurrence index. So, in principle, one can calculate the dimension of the generalized Whittaker model for any nilpotent orbits. We shall calculate the $\UU_4\fq$ case in Section \ref{sec8}.
However, in the general case, although the generalized Whittaker model $  \rm{Wh}_{\co_i}( \cdots \rm{Wh}_{\co_2}( \rm{Wh}_{\co_1}( \pi  )))$ still can be calculated explicitly, it contains a large number of irreducible constitutes, and the description of the composition of generalized Whittaker models would be very complicated.

In \cite[Section 1.5]{T}, there is a
geometric refinement of the condition (WF2) in the definition of wavefront sets as follows:
 \begin{itemize}
\item[] (WF$2'$) $\langle \pi,\Gamma_{\gamma'}\rangle\ne 0$ for some $\gamma'=\{X',H',Y'\}$ with $X'\in \wco_{\lambda'}^{F}$ implies $\widetilde{\co}_{\lambda'}\subset\widetilde{\co}_\lambda$.
\end{itemize}
Although we can not calculate the dimension the generalized Whittaker model for any nilpotent orbits, we still get a non-vanishing result, which implies the opposite direction of (WF$2'$).
\begin{theorem}\label{main2}
 Let $\pi\in\rm{Irr}(\UU_n\fq)$, $\ell(\pi)=(\ell_1,\ell_2,\cdots)$ be the descent index of $\pi$. If $\lambda\le \ell(\pi) $, then
  \[
 \rm{dim}\rm{Wh}_{\co_{\lambda}}(\pi)>0.
 \]
 where  $\lambda\le \ell(\pi) $ is the parabolic ordering
 \[
\sum_{i=1}^k \lambda_i\le \sum_{i=1}^k\ell(\pi)_i.
 \]

\end{theorem}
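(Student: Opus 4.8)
The plan is to reduce the parabolic-ordering statement to the already-established chain formula for iterated generalized Whittaker models, and then to a combinatorial monotonicity argument. First I would invoke Theorem \ref{main1}, which gives $\mathrm{dim}\,\mathrm{Wh}_{\co_{\ell(\pi)}}(\pi)=1$, together with the identity
\[
\mathrm{dim}\,\mathrm{Wh}_{\co_\lambda}(\pi)=\mathrm{dim}\,\mathrm{Wh}_{\co_{\lambda_k}}\bigl(\cdots\mathrm{Wh}_{\co_{\lambda_2}}(\mathrm{Wh}_{\co_{\lambda_1}}(\pi))\bigr)
\]
from Section \ref{sec6}, valid for arbitrary $\lambda=(\lambda_1,\dots,\lambda_k)$ (not only $\lambda=\ell(\pi)$). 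So it suffices to show that when $\lambda\le\ell(\pi)$ in the parabolic order, each of the successive "first descents" $\mathrm{Wh}_{\co_{\lambda_i}}(\cdots)$ stays nonzero. The key point is the behavior of the first-occurrence index under descent: for an irreducible representation $\tau$ of $\UU_m\fq$ with first occurrence index $\ell_1(\tau)$, the descent $\mathrm{Wh}_{\co_{(\ell,1,\dots,1)}}(\tau)$ is nonzero for every $\ell\le\ell_1(\tau)$, and when $\ell<\ell_1(\tau)$ the resulting representation of $\UU_{m-\ell}\fq$ is generally reducible but still has controlled first-occurrence indices — specifically, the descent index array of any irreducible constituent dominates (in the parabolic order) the "shifted remainder" of $\ell(\tau)$. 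This is exactly the content of the root-exchange analysis of \cite{GRS,GRS2} transported to the finite setting, combined with the explicit Whittaker-model computations of \cite{LW3,Wang2}.

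The second step is the combinatorial core. Given $\lambda\le\ell(\pi)$, meaning $\sum_{i=1}^k\lambda_i\le\sum_{i=1}^k\ell(\pi)_i$ for all $k$, I want to run the descent down the sequence $\lambda_1,\lambda_2,\dots$ and check at each stage that the chosen part $\lambda_i$ does not exceed the first-occurrence index available to the current (possibly reducible) descent. Writing $\ell(\pi)=(\ell_1,\ell_2,\dots)$, the partial-sum inequalities say precisely that the "Young-diagram" of $\lambda$ fits inside that of $\ell(\pi)$ when both are read as partitions; since $\ell(\pi)$ is itself a partition by Theorem \ref{main1}, and the descent at step $i$ peels off a hook of size $\lambda_i$ from a diagram that still dominates $(\ell_{i+1},\ell_{i+2},\dots)$ suitably adjusted, one shows inductively that $\mathrm{Wh}_{\co_{\lambda_i}}(\cdots)\ne0$ and that the residual descent index still dominates the tail $(\ell_{i+1},\dots)$. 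I would phrase this as a lemma: if $\tau$ is any irreducible constituent of $\mathrm{Wh}_{\co_{(\lambda_1,\dots,\lambda_{i-1})}}(\pi)$ with $\lambda\le\ell(\pi)$, then $\lambda_i\le\ell_1(\tau)$; granting the lemma, nonvanishing propagates all the way down and $\mathrm{Wh}_{\co_\lambda}(\pi)\ne0$.

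The main obstacle is establishing that lemma, i.e.\ controlling the first-occurrence indices of the \emph{reducible} intermediate descents. In the multiplicity-one case $\lambda=\ell(\pi)$ one descends along first-occurrence indices and everything remains irreducible, so the bookkeeping is clean; here the intermediate objects genuinely split, and one must ensure that \emph{every} constituent — not just some average or generic one — still admits a large enough descent. I expect this to follow from a semicontinuity/monotonicity property of the first occurrence in families coming from Lusztig correspondence: decomposing $\pi$ via $\mathcal{L}_s$ into a product $\prod_{[a]}\pi[a]$ with partitions $\lambda[a]$, the descent index is $\ell(\pi)_i=\sum_{[a]}\#[a]\cdot\lambda[a]^t_i$ by Theorem \ref{main1}, and a first descent with $\lambda_1<\ell_1$ corresponds, on the dual side, to removing a column of length $\lambda_1$ from the stacked transposes, which can be arranged to decrease exactly one of the $\lambda[a]$'s (or split into a bounded sum of such moves); each resulting constituent then has descent index equal to the correspondingly modified sum, which still dominates the tail. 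Making this precise — in particular handling the theta-correspondence bookkeeping of \cite{GZ,Z} that tracks how constituents of a Fourier--Jacobi or Bessel descent sit inside Lusztig series — is where the real work lies; the rest is the parabolic-order manipulation of partitions, which is routine.
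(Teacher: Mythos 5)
Your first step (reducing $\rm{dim}\rm{Wh}_{\co_\lambda}(\pi)$ to an iterated descent via the exchanging-roots identity, valid for arbitrary $\lambda$, not just $\ell(\pi)$) is exactly how the paper begins. But the heart of your argument is the lemma you state and do not prove: that \emph{every} irreducible constituent $\tau$ of $\rm{Wh}_{\co_{(\lambda_1,\dots,\lambda_{i-1})}}(\pi)$ satisfies $\lambda_i\le\ell_1(\tau)$. This is a genuine gap, and it is also a misdiagnosis of what is needed: since the dimension of a generalized Whittaker model is additive over the constituents of the intermediate descent, nonvanishing only requires exhibiting \emph{one} constituent whose further descent along $(\lambda_{i+1},\dots)$ is nonzero. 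Your ``every constituent'' lemma is strictly stronger than necessary, you admit you cannot establish it, and your sketch of a ``semicontinuity'' mechanism (removing a column from the stacked transposes decreases exactly one $\lambda[a]$) does not actually control the descent indices of all the representations $\pi'$ that Theorem \ref{ggp} puts into a sub-first-occurrence descent (all $\pi'$ with $2$-transverse Lusztig data appear, including ones supported on new eigenvalues), so as written the induction does not close.

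The paper avoids this entirely by constructing a single explicit good constituent. Writing $\pi'=\CD_{\ell_1}(\pi)$ for the (irreducible) first descent, one picks a fresh eigenvalue $b\notin\{a_i\}$ (possible since $q$ is large) and lets $\pi_b\in\cal{E}(\UU_{n-\lambda_1}\fq,s_b)$ be the representation whose Lusztig data is that of $\pi'$ together with $\cal{L}_{\pi_b}([b])=(1,\dots,1)$ of size $\ell_1-\lambda_1$. Corollary \ref{d1}(i) shows $m(\pi,\pi_b)=1$, i.e.\ $\pi_b$ occurs in $\rm{Wh}_{\co_{(\lambda_1,1,\dots,1)}}(\pi)$, and Corollary \ref{d1}(ii) computes its descent index exactly: $\ell(\pi_b)=(\ell_2+\ell_1-\lambda_1,\ell_3,\dots)$, which dominates $(\lambda_2,\lambda_3,\dots)$ precisely because $\lambda\le\ell(\pi)$ in the parabolic order. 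Induction on $n$ then gives $\rm{Wh}_{\co_{(\lambda_2,\dots)}}(\pi_b)\ne0$, and monotonicity over constituents plus Lemma \ref{ex3} yields $\rm{dim}\rm{Wh}_{\co_\lambda}(\pi)\ge\rm{dim}\rm{Wh}_{\co_{(\lambda_2,\dots)}}(\pi_b)>0$. If you replace your global lemma by this one-constituent construction (or prove some substitute producing a single constituent with sufficiently large descent index at each stage), your outline becomes a proof; as it stands, the decisive step is missing.
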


This paper is organized as follows. In Section \ref{sec2}, we recall the theory of Deligne-Lusztig characters and the Lusztig correspondence and give a classification of irreducible representations of unitary groups. In Section \ref{sec3}, we review the parametrization of nilpotent orbits in the classical Lie algebras, following the book by Collingwood and McGovern \cite{CM} and the construction of the generalized Gelfand-Graev representations, followed by Luszig \cite{L7} and Gomez-Zhu \cite{GZ}. In section \ref{sec4}, we recall the theta correspondence over finite fields for dual pairs of two unitary groups. Then we recall the behavior of nilpotent orbit with theta correspondence in \cite{GZ} and write a finite fields version of their results. In Section \ref{sec5}, we describe the relationship between the generalised Gelfand-Graev representation and the descent method, and calculate the descent of irreducible representations. In Section \ref{sec7}, we obtain a finite unitary groups version of ``Exchanging roots Lemma'', which is the main tools to prove our theorems. We point out that the proof of ``Exchanging roots Lemma'' differs from the proof in \cite{GRS, PW} and is based on the theta argument on nilpotent orbit described in section \ref{sec4}. In Section \ref{sec7}, we prove our main theorems. In Section \ref{sec8}, we calculate generalised Gelfand-Graev representations of $\UU_4\fq$.
\subsection*{Acknowledgement}  We acknowledge generous support provided by National Natural Science Foundation of PR China (No. 12071326 and No. 12201444), China Postdoctoral Science Foundation (No. 2021TQ0233, No. 2021M702399), and Jiangsu Funding Program for Excellent Postdoctoral Talent (No. 2022ZB283029).

\section{Deligne-Lusztig characters and Lusztig correspondence} \label{sec2}
We review some standard facts on Deligne-Lusztig characters and Lusztig correspondence (cf. \cite[Chapter 7, 12]{C}). Let $G$ be a connected reductive algebraic
group over $\mathbb{F}_q$. In \cite{DL}, Deligne and Lusztig defined a virtual character $R^{G}_{T,\theta}$ of $G^F$, associated to an $F$-stable maximal torus $T$ of $G$ and a character $\theta$ of $T^F$.
More generally, let $L$ be an $F$-stable Levi subgroup of a parabolic subgroup $P$ which is not necessarily $F$-stable, and $\pi$ be a representation of the group $L^F$. Then $R^G_L(\pi)$ is a virtual character of $G^F$. If $P$ is $F$-stable, then the Deligne-Lusztig induction coincides with the parabolic induction
\[
R^G_L(\pi)= \rm{Ind}^{G^F}_{P^F}(\pi).
\]

\subsection{Centralizer of a semisimple element }\label{sec2.1}

 Let $G$ be a unitary group over finite field, and $F$ be a Frobenius morphism of $G$, and $s$ be a semisimple element in $G$. Let $C_{G(\overline{\bb{F}}_q)}(s)$ be the centralizer in $G(\overline{\bb{F}}_q)$ of a semisimple element $s $. In \cite[subsection 1.B]{AMR}, $C_{G(\overline{\bb{F}}_q)}(s)$ is described as follows. We denote $T(\overline{\bb{F}}_q) \cong \overline{\bb{F}}^\times_q\times\cdots\times\overline{\bb{F}}_q^\times$ by a $F$-rational maximal torus of $G(\overline{\bb{F}}_q)$, and by $s=(x_1,\cdots,x_l)\in T^F$. Let $\nu_{a}(s):=\#\{i|x_i=a\}$ and
  \[
  [ a] := \{a^{{(-q)}^k}|k\in\bb{Z}\}.
  \]
  Clearly, if $a'\in[a]$ and $a\in\{x_i\}$, then $a'\in\{x_i\}$ and $\nu_{a'}(s)=\nu_{a}(s)$.
 The group $C_{G(\overline{\bb{F}}_q)}(s)$ has a natural decomposition with the eigenvalues of $s$:
\[
C_{G(\overline{\bb{F}}_q)}(s)=\prod_{[ a] \subset \{x_i\}}G_{[ a]}(s)(\overline{\bb{F}}_q)
\]
where $G_{[ a]}(s)(\overline{\bb{F}}_q)$ is either a general linear group $\GGL_{\nu_a(s)}$ or unitary group $\UU_{\nu_a(s)}$.

For $[a]\nsubseteq \{x_i\}$, we set $G_{[ a]}(s):=1$. Then we rewrite
\begin{equation}\label{decomp}
C_{G^F}(s)=\prod_{[ a] }G^F_{[ a]}(s)
\end{equation}
where the product runs over every $[a]$ with $a\in \overline{\bb{F}}_q$.

\subsection{Lusztig correspondence for unitary groups}\label{sec2.2}
Let $G^*$ be the dual group of $G$. We still denote the Frobenius endomorphism of $G^*$ by $F$. Then there is a natural bijection between the set of $G^F$-conjugacy classes of $(T, \theta)$ and the set of $G^{*F}$-conjugacy classes of $(T^*, s)$ where $T^*$ is a $F$-stable maximal torus in $G^*$ and $s \in   T^{*F}$. We will also denote $R_{T,\theta}^G$  by $R_{T^*,s}^G$ if $(T, \theta)$ corresponds to $(T^*, s)$.
For a semisimple element $s \in G^{*F}$, define
\[
\mathcal{E}(G^F,s) = \{ \chi \in \rm{Irr}(G^F)  :  \langle \chi, R_{T^*,s}^G\rangle \ne 0\textrm{ for some }T^*\textrm{ containing }s \}.
\]
The set $\mathcal{E}(G^F,s)$ is called the Lusztig series. We can thus define a partition of $\rm{Irr}(G^F)$ by Lusztig series
i.e.,
\[
\rm{Irr}(G^F)=\coprod_{(s)}\mathcal{E}(G^F,s).
\]
An irreducible representation $\pi$ of $G^F$  is called unipotent if
\[
\pi\in \cal{E}(G^F,I).
\]
\begin{proposition}[Lusztig]\label{Lus}
There is a bijection
\[
\mathcal{L}_s:\mathcal{E}(G^F,s)\to \mathcal{E}(C_{G^{*F}}(s),I),
\]
extended by linearity to a map between virtual characters satisfying that
\begin{equation}\label{Lus2}
\mathcal{L}_s(\varepsilon_G R^G_{T^*,s})=\varepsilon_{C_{G^{*}}(s)} R^{C_{G^{*F}}(s)}_{T^*,1}.
\end{equation}
Moreover, we have
\[
\rm{dim}(\pi)=\frac{|G^F|_{p'}}{|C_{G^{*F}}(s)|_{p'}}\rm{dim}(\cal{L}_s(\pi))
\]
where $|G|_{p'}$ denotes greatest factor of $|G|$ not divided by $p$ with $\Fq=\bb{F}_{p^m}$, , and  $\varepsilon_G:= (-1)^r$ where $r$ is the $\Fq$-rank of $G$.
In particular, Lusztig correspondence sends cuspidal representation to cuspidal representation.
\end{proposition}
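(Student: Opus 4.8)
The plan is to recognize Proposition~\ref{Lus} as Lusztig's Jordan decomposition of characters, so that the only things requiring a genuine argument are the two features peculiar to type $A$: the connectedness of centralizers, which makes the bijection compatible with Deligne--Lusztig induction, and the uniformity of all characters in sight, which makes $\mathcal{L}_s$ canonical and reduces the degree and cuspidality assertions to bookkeeping. The hard input is entirely Lusztig's theorem itself; the rest is formal.

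\emph{Existence and the intertwining property \eqref{Lus2}.} I would invoke Lusztig's theorem (see \cite{C} and the references therein): for connected reductive $G$ there is a bijection $\mathcal{L}_s\colon \mathcal{E}(G^F,s)\to\mathcal{E}(C_{G^{*F}}(s),1)$ whose linear extension carries $\varepsilon_G R^G_{T^*,s}$ to $\varepsilon_{C_{G^*}(s)}R^{C_{G^{*F}}(s)}_{T^*,1}$, \emph{provided} $C_{G^*}(s)$ is connected. Since the derived group $\mathrm{SU}_n$ of $G=\UU_n$ is simply connected, Steinberg's connectedness theorem gives that $C_{G^*}(s)$ is connected for every semisimple $s$, so the hypothesis holds; one also notes that an $F$-stable maximal torus $T^*$ of $G^*$ through $s$ is the same thing as an $F$-stable maximal torus of $C_{G^*}(s)$, so the right side of \eqref{Lus2} is meaningful.

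\emph{Uniqueness.} The key observation is that every irreducible character of a finite general linear or unitary group is uniform, i.e.\ a $\mathbb{Q}$-linear combination of Deligne--Lusztig characters $R^G_{T,\theta}$ --- this is Green's theorem for $\GGL_n$ and its Ennola-dual analogue for $\UU_n$, and structurally it reflects that every family of unipotent characters in (split or twisted) type $A$ is a singleton. By the decomposition \eqref{decomp} the centralizer $C_{G^{*F}}(s)$ is a direct product of such groups, so $\mathcal{E}(C_{G^{*F}}(s),1)$ consists of uniform characters too. Hence $\mathbb{Q}\,\mathcal{E}(G^F,s)$ is spanned by $\{R^G_{T^*,s}\}_{T^*\ni s}$ and $\mathbb{Q}\,\mathcal{E}(C_{G^{*F}}(s),1)$ by $\{R^{C_{G^{*F}}(s)}_{T^*,1}\}$; so \eqref{Lus2} already pins $\mathcal{L}_s$ down as a linear map, and since it must send the orthonormal set $\mathcal{E}(G^F,s)$ onto $\pm\,\mathcal{E}(C_{G^{*F}}(s),1)$, the bijection of characters is determined, with signs $+$ by the degree comparison below.

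\emph{Degree formula and cuspidality.} Writing $\pi=\sum_{T^*}a_{T^*}\,\varepsilon_G R^G_{T^*,s}$, the sum over $G^{*F}$-classes of $F$-stable maximal tori through $s$, \eqref{Lus2} gives $\mathcal{L}_s(\pi)=\sum_{T^*}a_{T^*}\,\varepsilon_{C_{G^*}(s)}R^{C_{G^{*F}}(s)}_{T^*,1}$. Evaluating both at the identity with the Deligne--Lusztig degree formula $R^G_{T^*,s}(1)=\varepsilon_G\varepsilon_{T^*}|G^F|_{p'}/|T^{*F}|$ \cite{DL} and its analogue for $C_{G^*}(s)$ --- and using that $\varepsilon_{T^*}$ and $|T^{*F}|$ do not depend on the ambient group --- yields $\pi(1)/\mathcal{L}_s(\pi)(1)=|G^F|_{p'}/|C_{G^{*F}}(s)|_{p'}$ term by term (the common factor $\sum_{T^*}a_{T^*}\varepsilon_{T^*}/|T^{*F}|$ is nonzero because $\pi(1)>0$), which is the stated equality and also forces $\mathcal{L}_s(\pi)(1)>0$, fixing the sign above. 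Finally, preservation of cuspidality would follow from the compatibility of the Jordan decomposition with Deligne--Lusztig induction from proper $F$-stable Levi subgroups: for uniform $\pi$ the vanishing of ${}^{*}R^G_{L}\pi$ for all proper $L$ translates, via the Mackey formula and \eqref{Lus2}, into the corresponding vanishing on $C_{G^{*F}}(s)$, so $\mathcal{L}_s(\pi)$ is cuspidal. The main obstacle is thus not any computation here but the input itself, Lusztig's theorem; no further obstacle is expected.
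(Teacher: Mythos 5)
The paper offers no proof of this proposition at all: it is quoted as Lusztig's theorem (Jordan decomposition of characters, cf.\ \cite{C}, \cite{LS}), with uniqueness for unitary groups pinned down by \eqref{Lus2} exactly as you argue via uniformity, so your proposal is a correct reconstruction resting on the same input rather than a different route. The only points stated loosely are harmless in type $A$: Steinberg's connectedness theorem should be applied to $G^*$ (fine, since $G^*\cong\UU_n$ also has simply connected derived group), and the cuspidality step tacitly uses that any $F$-stable maximal torus lying in a proper $F$-stable Levi of $C_{G^*}(s)$ also lies in a proper $F$-stable Levi of $G^*$, which is immediate for products of general linear and unitary groups.
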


The correspondence $\cal{L}_s$ is usually not uniquely determined. In this paper, we only
 consider Lusztig correspondence for unitary groups, which is uniquely determined by (\ref{Lus2}). By (\ref{decomp}), we can rewrite Lusztig correspondence as
\begin{equation}\label{lc}
\begin{matrix}
\mathcal{L}_s:&\mathcal{E}(G^F,s)&\to &\prod_{[ a] }\mathcal{E}(G^{*F}_{[ a]}(s),I)&\\
\\
&\pi&\to&\prod_{[a]}\pi[a]
\end{matrix}
\end{equation}
where $\pi[a]$ is an irreducible unipotent representation of a general linear or unitary group.

\subsection{Classification of irreducible representations of unitary groups}\label{sec2.3}

The classification of the representations of $G^F=\GGL_n\fq$ and $\UU_n(\Fq)$ was given by Lusztig and Srinivasan in \cite{LS}. Denote by $W_n\cong S_n$ the Weyl group of the diagonal torus $T_0$ in  $\GGL_n\fq$ or $\UU_n(\Fq)$. For any $F$-stable maximal torus $T$, there is $g\in G$ such that $^gT=T_0$. Since $T$ is $F$-stable, we have $gF(g^{-1})\in N_G(T_0)$. If $w$ is the image of  $gF(g^{-1})$ in $W_n$, then we denote $T$ by $T_w$.

\begin{theorem}\label{thm3.3}
Let $\sigma$ be an irreducible representation of $S_n$. Then
\[
R_\sigma^{\UU_n}:=\frac{1}{|W_n|}\sum_{w\in W_n}\sigma(ww_0)R_{T_w,1}^{\UU_n}
\]
is (up to sign) a unipotent representation of $\UU_n(\Fq)$ (resp. $\GGL_n\fq$) and all unipotent representations of $\UU_n(\Fq)$ (resp. $\GGL_n\fq$) arise in this way.\end{theorem}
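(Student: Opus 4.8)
The plan is to reduce the whole statement to a single serious point --- that each $R_\sigma^{\UU_n}$ is, up to sign, a genuine irreducible character --- and to dispatch that point by quoting the Lusztig--Srinivasan analysis \cite{LS}. The split case $\GGL_n$ is classical (Green): there the Frobenius acts trivially on $W_n\cong S_n$, the factor $w_0$ is dropped, and every unipotent character already occurs in the principal series $\mathrm{Ind}_{B^F}^{G^F}1=R_{T_1,1}^{\GGL_n}$, whose endomorphism algebra is the split semisimple Iwahori--Hecke algebra of $S_n$ at $q$; so I will concentrate on $\UU_n$. The new feature there is that for $\UU_n={}^2A_{n-1}$ the Frobenius acts on $W_n\cong S_n$ through the non-trivial graph automorphism, namely conjugation by the longest element $w_0$; consequently the $G^F$-classes of $F$-stable maximal tori are indexed by $F$-twisted conjugacy classes of $W_n$, the map $w\mapsto ww_0$ identifies those with the ordinary conjugacy classes of $S_n$, and $R_{T_w,1}^{\UU_n}$ depends only on the conjugacy class of $ww_0$.

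First I would prove that $\{R_\sigma^{\UU_n}\}_{\sigma\in\mathrm{Irr}(S_n)}$ is an orthonormal family of virtual class functions. By the Deligne--Lusztig scalar product formula applied to the trivial character of the tori, and using $F(x)=w_0xw_0$,
\[
\langle R_{T_w,1}^{\UU_n},R_{T_{w'},1}^{\UU_n}\rangle=\#\{x\in W_n:\ xw'F(x)^{-1}=w\}=\begin{cases}|C_{S_n}(ww_0)|,& ww_0\sim w'w_0,\\ 0,&\text{otherwise.}\end{cases}
\]
Substituting $u=ww_0$, $u'=w'w_0$, and using that the characters of $S_n$ are real, the first orthogonality relation gives $\langle R_\sigma^{\UU_n},R_\tau^{\UU_n}\rangle=\langle\sigma,\tau\rangle_{S_n}=\delta_{\sigma\tau}$; inverting the defining equation by the second (column) orthogonality relation of $S_n$ gives the dual formula $R_{T_w,1}^{\UU_n}=\sum_{\sigma}\sigma(ww_0)\,R_\sigma^{\UU_n}$. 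Hence every irreducible unipotent character $\pi$ of $\UU_n^F$ --- which by definition occurs in some Deligne--Lusztig character $R_{T_w,1}^{\UU_n}$ --- is non-orthogonal to at least one $R_\sigma^{\UU_n}$. Therefore, once one knows that each $R_\sigma^{\UU_n}$ equals $\pm\chi_\sigma$ for an irreducible $\chi_\sigma$, orthonormality makes the $\chi_\sigma$ pairwise distinct and the previous remark makes them exhaust the unipotent characters; in particular the number of unipotent characters comes out as $|\mathrm{Irr}(S_n)|$.

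The hard part is thus exactly that $R_\sigma^{\UU_n}$ --- a priori only a $\mathbb{Q}$-combination of irreducible characters --- is in fact a $\mathbb{Z}$-combination, and so, being of norm one, is $\pm$ a single irreducible. This is the substance of \cite{LS}, and the step I expect to be the main obstacle. One establishes it by distributing the unipotent characters of $\UU_n^F$ into Harish--Chandra series over the cuspidal unipotent characters of Levi subgroups $\GGL_{a_1}\times\cdots\times\GGL_{a_r}\times\UU_t$ (the group $\UU_t$ carrying a cuspidal unipotent character exactly when $t$ is a triangular number), observing that the endomorphism algebra of each Harish--Chandra induced module is a Hecke algebra of a finite Coxeter group whose specialization at the relevant power of $q$ is split semisimple with the numerical invariants of the group algebra, and then matching its standard basis against the restrictions of the $R_{T_w,1}^{\UU_n}$, in comparison with the already understood $\GGL_n$ picture via Ennola duality $q\leftrightarrow -q$. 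In Lusztig's later language this is the statement that in type $A$ every family of unipotent characters is a singleton with trivial Fourier matrix, so that the ``almost character'' $R_\sigma^{\UU_n}$ is precisely $\pm\chi_\sigma$. For our purposes this may simply be quoted from \cite{LS}.
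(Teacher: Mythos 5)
The paper does not prove this statement at all: it is quoted as the Lusztig--Srinivasan classification, with \cite{LS} as the reference, so there is no internal argument to compare against. Your outline is a correct reconstruction of the standard proof and is fully compatible with the paper's use of the result. The twisted setup is right: for $\UU_n$ the Frobenius acts on $W_n\cong S_n$ by conjugation by $w_0$, so $F$-twisted conjugacy classes (which index the $G^F$-classes of $F$-stable maximal tori and hence the characters $R_{T_w,1}$) are carried to ordinary classes by $w\mapsto ww_0$; your inner-product computation via the Deligne--Lusztig (exclusion/Mackey) formula, the substitution $u=ww_0$, and the rationality of $S_n$-characters correctly give orthonormality of the $R_\sigma^{\UU_n}$ and the inversion formula $R_{T_w,1}^{\UU_n}=\sum_\sigma \sigma(ww_0)R_\sigma^{\UU_n}$, whence exhaustion of the unipotent characters once each $R_\sigma^{\UU_n}$ is known to be $\pm$ an irreducible character. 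That last integrality/irreducibility step is indeed the genuine content, and quoting it from \cite{LS} is exactly what the paper itself does wholesale, so there is no gap by the paper's own standard; your sketch of how \cite{LS} establishes it (Harish--Chandra series over cuspidal unipotent characters of $\GGL_{a_1}\times\cdots\times\GGL_{a_r}\times\UU_t$ with $t$ triangular, Hecke-algebra endomorphism rings, comparison with $\GGL_n$ and Ennola duality $q\leftrightarrow -q$; in later language, singleton families with trivial Fourier matrix in type $A$) is a fair description, though of course it is a pointer rather than a proof. One further small point in your favor: you correctly observe that in the split case $\GGL_n$ the factor $w_0$ should be dropped from the defining sum, a detail the paper's ``resp.'' formulation glosses over.
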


It is well-known that irreducible representations of $S_n$ are parametrized by partitions of $n$. For a partition $\lambda$ of $n$,  denote by  $\sigma_\lambda$ the corresponding representation of $S_n$, and
let $\pi_\lambda= \pm R_{\sigma_\lambda}^{\UU_n}$ (resp. $R_{\sigma_\lambda}^{\GGL_n}$) be the corresponding unipotent representation of $\UU_n(\Fq)$ (resp. $\GGL_n\fq$).
By Lusztig's result \cite{L1},  $\pi_\lambda$ is (up to sign) a unipotent cuspidal representation of $\UU_n(\Fq)$ if and only if $n=\frac{k(k+1)}{2}$ for some positive integer $k$ and  $\lambda=[k,k-1,\cdots,1]$.

Let $\lambda=(\lambda_1,\lambda_2,\cdots,\lambda_k)$ be a partition of $n$, and $|\lambda|:=\sum_i\lambda_i$. For simplicity of notations, we write $\lambda=(a_1^{k_1},\cdots,a_l^{k_l})$ where $a_j$ runs over $\{\lambda_i\}$ and $k_j=\#\{i|\lambda_i=a_j\}$.
We set
\[
a\lambda:=(a\lambda_1,a\lambda_2,\cdots,a\lambda_k) \ \textrm{ with }a\in\bb{Z}
\]
and
\[
\lambda^{-j}:=(\lambda_1-j,\lambda_2-j,\cdots,\lambda_k-j).
\]
It is obvious that one can get $\lambda^{-j}$ by removing the first $j$ columns of $\lambda$. As is standard, we realize partitions as Young diagrams, and denote by $\lambda^t=(\lambda^t_1,\lambda^t_2,\cdots)$ the transpose of $\lambda$. Then $(\lambda^{-j})^t_1=\lambda^t_{j+1}$.

Let $G=\UU_n$ be a unitary group, and $G^*$ be the dual group. Recall that we have Lusztig correspondence.
\[
\begin{matrix}
\mathcal{L}_s:&\mathcal{E}(G^F,s)&\to &\prod_{[ a] }\mathcal{E}(G^{*F}_{[ a]}(s),I)&\\
\\
&\pi&\to&\prod_{[a]}\pi[a]
\end{matrix}
\]
where $G^{*F}_{[ a]}(s)$ is either a unitary group or a general linear group, and $\pi[a]$ are irreducible unipotent representations of $G^{*F}_{[ a]}(s)$. Then we can associate each $\pi[a]$ with a partition $\lambda[a]$, and we obtained the following classification of irreducible representations:
\[
\begin{matrix}
{\rm {Irr}}(\UU_n\fq)&\longleftrightarrow& \left\{\cal{L}:\cal{S}\to\cal{P}\Big|\sum_{[a]}\#[a]\cdot|\cal{L}([a])|=n\right\}\\
\\
\pi&\longleftrightarrow& \cal{L}_\pi
\end{matrix}
\]
where $\cal{S}$ is the set of $[a]$ and $\cal{P}$ is the set of partitions, and $\cal{L}_\pi(a)=\lambda[a]$ for each $[a]$. For example, let $\bf{1}$ be the trivial representation of $\UU_n\fq$. Then
 \[
\cal{L}_{\bf{1}}[a]=\left\{
  \begin{aligned}
 &{(n)}&\textrm{ if $a=1$};\\
  &{(0)}&\textrm{ otherwise}.
\end{aligned}
\right.
\]

\section{Nilpotent Orbits and generalized Gelfand-Graev representations}\label{sec3}
Let $G$ be a connected reductive algebraic group defined over $\Fq$ and $\fg$ its Lie algebra, on which we
fix an Ad $G$-invariant non-degenerate bilinear form $\kappa$.

\subsection{$\frak{sl}_2$-triples}\label{sec3.1}
The standard references for the classification of nilpotent orbits can be found in
\cite{CM} or \cite[Chapter 5]{C}.
We will review the basic results on nilpotent orbits and $\frak{sl}_2$-triples in $\fg$.

A $\frak{sl}_2$-triple is a Lie algebra homomorphism $\gamma:\frak{sl}_2\to \fg$.
Let $\Theta$ be the set of all $\frak{sl}_2$-triples and for $\gamma\in \Theta$, we set
\[
X:=\gamma\begin{pmatrix}0&1\\
0&0
\end{pmatrix},\
Y:=\gamma\begin{pmatrix}0& 0\\
1& 0
\end{pmatrix}
,\
H:=\gamma\begin{pmatrix}1& 0\\
0& -1
\end{pmatrix}.
\]
We regard $\Theta$ as a closed subvariety of the vector space $\fg^{\otimes 3}$, via $\gamma\to \{X,Y,H\}$.

For a fixed $\frak{sl}_2$-triple,
let $\fg_i = \{Z \in \fg | \rm{ad}(H)(Z) = iZ\}$ with $i \in \bb{Z}$. Then, from standard $\frak{sl}_2$-theory, we
have the decomposition
\[
\fg = \bigoplus_{j\in\bb{Z}}\fg_{j}.
\]
Let
\[
\fg_{\ge i}:=\bigoplus_{j\ge i}\fg_{j}\textrm{ and } \fg_{\le i}:=\bigoplus_{j\le i}\fg_{j}.
\]
They are Lie algebras of a close connect subgroup $G_{\ge i}$ and $G_{\le i}$ of $G$, respectively.

From the well-known results of Jacobson-Morozov and Kostant \cite[Chapter
3]{CM}, there is a 1-1 correspondence
\[
\begin{matrix}
\left\{\begin{matrix}
\textrm{Ad}(G)\textrm{ conjugacy classes of}\\
 \textrm{ $\frak{sl}_2$-triples in $\fg$}
\end{matrix}
\right\}
&\longleftrightarrow&
\left\{\begin{matrix}
\textrm{Nonzero nilpotent Ad}(G)\textrm{-orbits}\\
\wco\subset\fg
\end{matrix}
\right\}\\
\\
\gamma = \{H, X, Y \}&\longleftrightarrow&\wco=\rm{Ad}(G)\cdot X
\end{matrix}.
\]
If the conjugacy class of an $\frak{sl}_2$-triple $\gamma$ corresponds to a nilpotent orbit $\wco\subset\fg$, then
we say that $\gamma$ is an $\frak{sl}_2$-triple of type $\wco$. We also call these $\rm{Ad}(G)$-orbits $\wco$ the $F$-stable nilpotent orbits in $\fg$.
\subsection{$F$-rational orbits of unitary groups}\label{sec3.2}
This subsection will classify $F$-rational orbits $\co$ for each $F$-stable nilpotent orbit $\wco$.

From now on, assume that $G$ is a unitary group, and $G^F=\UU_{n}\fq$ naturally acts on $V$ where $V$ is an $\Fq$-vector space endowed with a Hermitian form.
Recall that the set of $F$-stable nilpotent orbits of unitary groups are parameterized by partitions of $n$. Hence each partition $\lambda$
defines a $F$-stable nilpotent orbit $\wco_\lambda$.

Methods of \cite[I.6]{Wal} imply the parameterization of $F$-rational nilpotent orbit. It also can be found in \cite[section 3]{GZ}.
The $F$-rational orbits of nilpotent elements in $\fg^F$ are parameterized by pairs $(\lambda,(Q(k_i)))$, where $\lambda=(a_1^{k_1},\cdots,a_m^{k_m})$ and $Q(k_i)$ is a equivalence class of non-degenerate Hermitian form of dimension $k_i$ with $1\le i\le m$. Recall that in the finite fields case, for each $k\in \bb{N}$, there is only one equivalence class of the non-degenerate Hermitian form of dimension $k$. Then we have the following parameterization of $F$-rational orbits:
\begin{theorem}\label{ra}
Each $F$-stable nilpotent orbit $\wco$ contains a singleton $F$-rational orbit $\co$. In particular, the $F$-rational nilpotent orbits of $G$ are parameterized by partitions $\lambda$ of $n$.
\end{theorem}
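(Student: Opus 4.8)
The plan is to deduce Theorem \ref{ra} from the parametrization of $F$-rational nilpotent orbits recalled above (following \cite[I.6]{Wal} and \cite[Section 3]{GZ}) together with the classification of Hermitian forms over $\Fqq$. Recall that loc.\ cit.\ attaches to an $F$-rational nilpotent orbit in $\fg^F$ a pair $(\lambda,(Q(k_i)))$, where $\lambda=(a_1^{k_1},\cdots,a_m^{k_m})$ is its Jordan type and $Q(k_i)$ is the equivalence class of the non-degenerate Hermitian form of dimension $k_i$ induced by the ambient Hermitian form on the multiplicity space $M_{a_i}:=\mathrm{Hom}_{\mathfrak{sl}_2}(V_{a_i},V)$ of the size-$a_i$ Jordan blocks (here $V_a$ is the irreducible $\mathfrak{sl}_2$-module of dimension $a$, and the sign twist $(-1)^{a_i-1}$ relating Hermitian and skew-Hermitian forms is immaterial over $\Fqq$). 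The first step is to observe that the second coordinate of this datum carries no information: over $\Fqq$ there is, for every $k\in\bb{N}$, a single equivalence class of non-degenerate Hermitian form of dimension $k$ (equivalently $\Fq^\times/N_{\Fqq/\Fq}(\Fqq^\times)$ is trivial), so $Q(k_i)$ is forced and the map sending an $F$-rational nilpotent orbit to its Jordan type $\lambda$ is injective.

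Next I would check surjectivity: every partition $\lambda$ of $n$ is realised. Running the same parametrization in reverse, one takes the standard $k_i$-dimensional Hermitian space for each $M_{a_i}$; the spaces $M_{a_i}\otimes V_{a_i}$ then assemble into an $n$-dimensional Hermitian space over $\Fqq$ --- necessarily the standard one, again by uniqueness --- carrying a nilpotent $X\in\fg^F$ of Jordan type $\lambda$, so $\wco_\lambda^F\ne\emptyset$. In contrast to the symplectic and orthogonal cases, no parity constraint obstructs this step, precisely because $\pm$-Hermitian forms over $\Fqq$ form a single theory (multiplication by $\delta\in\Fqq^\times$ with $\delta^q=-\delta$ interchanges them). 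Combining the two steps yields the bijection between $F$-rational nilpotent orbits of $\UU_n$ and partitions of $n$; since the geometric nilpotent orbits $\wco$ are parametrized by the same set and are all $F$-stable, each $\wco$ contains exactly one $F$-rational orbit, which is Theorem \ref{ra}.

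A more cohomological route would bypass the form bookkeeping altogether: every $F$-stable geometric orbit contains an $F$-fixed point by the Lang--Steinberg theorem (the map $g\mapsto g^{-1}F(g)$ is surjective on $G$), giving existence for free, while the number of $G^F$-orbits inside $\wco^F$ equals $|H^1(F,A_G(X))|$ with $A_G(X)=C_G(X)/C_G(X)^\circ$ for $X\in\wco^F$; but over $\FFq$ the group $G$ is $\GGL_n$ (with the unitary Frobenius), so $C_G(X)$ is the centralizer of a nilpotent matrix and is connected, forcing $A_G(X)=1$. Either way, I expect the only genuine work to be the parametrization statement itself --- transporting the ambient Hermitian form to the multiplicity spaces, identifying its type, and checking independence of the sign and scaling choices --- which is precisely what is done in \cite[I.6]{Wal} and \cite[Section 3]{GZ}; granting that input, Theorem \ref{ra} reduces to the single observation that over a finite field a Hermitian form is determined by its dimension.
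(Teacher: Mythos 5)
Your main argument is exactly the paper's: the paper treats Theorem \ref{ra} as an immediate consequence of the parametrization of $F$-rational nilpotent orbits by pairs $(\lambda,(Q(k_i)))$ quoted from \cite[I.6]{Wal} and \cite[Section 3]{GZ}, combined with the fact that over $\Fqq$ there is a unique equivalence class of non-degenerate Hermitian form in each dimension, which is precisely your first two paragraphs (you spell out injectivity and surjectivity a little more explicitly than the text does). Your alternative cohomological route --- existence of a rational point in each $F$-stable orbit by Lang--Steinberg, and uniqueness because the centralizer of a nilpotent element in $\GGL_n$ is connected so $A_G(X)=1$ --- is also correct and is not in the paper; it avoids the Hermitian-form bookkeeping entirely, at the cost of invoking the general orbit-counting formula via $H^1(F,A_G(X))$, whereas the paper's (and your primary) route gives the same parametrization directly from the classification of forms.
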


\subsection{Generalized
Gelfand-Graev representations}\label{sec3.3}
We fix an $\rm{Ad}(G)$-invariant non-degenerate bilinear form $\kappa$ on $\fg$. Let $\psi$ be a non-trivial character of $\Fq$.

Pick a $\frak{sl}_2$-triple $\gamma=\{X,Y,H\}\in \Theta^F$. Then $G_{\ge i}$ and $\fg_{\ge i}$ are defined over $\Fq$.
 We regard $\kappa$ as a linear form of $\fg_{\ge 1}$ defined by
\[
\kappa(x)  =  \kappa(Y, x).
\]
By setting $\langle x,  y\rangle  =  \kappa ([x, y])$, we define a non-singular symplectic form on $\fg_1$.
We choose a Lagrangian subspace $L$, for $\langle  , \rangle$ and we denote by$\fg_{\ge1.5}:=L\oplus\bigoplus_{\ge2}\fg_i$.  Then $\fg_{\ge1.5}$ is the Lie algebra of a closed, connected unipotent subgroup $G_{\ge1.5}$ of $G_{\ge1}$. The restriction of $\kappa$ to $\fg_{\ge1.5}$ vanishes on all commutators of this Lie algebra; hence, the composition
\[
G_{\ge1.5}^F\xrightarrow{\rm{log}}\fg_{\ge1.5}^F\xrightarrow{\kappa}\Fq\xrightarrow{\psi}\bb{C}^\times
\]
is a homomorphism of algebraic groups. By abuse of notations, we still denote the above character of $G_{\ge1.5}^F$ by $\psi$.

We now associate $\gamma$ with a generalized Gelfand-Graev representation.
\[
\Gamma_\lambda:=\rm{Ind}^{G^F}_{G_{\ge 1.5}^F}\psi
\]
and
\[
q^{\rm{dim}\frac{\fg_1}{2}}\cdot\Gamma_\lambda=\rm{Ind}^{G^F}_{G_{\ge 2}^F}\psi.
\]

Note that $H_\lambda:=G_{\ge 1}^F/[G_{\ge 2}^F,G_{\ge 2}^F]\cong L^F\oplus L^{\vee F} \oplus Z^F$ is a Heisenberg group where $Z = G_{\ge 2}^F/[G_{\ge 2}^F,G_{\ge 2}^F]$ and $L$ Lagrangian subspace of $\fg_1$ as before.
 Then, according to the Stone-von Neumann
theorem, there exists a unique, up to equivalence, irreducible representation $\rho_{\psi}$ of $H_\lambda$ such that the center $Z^F$ of $H_\lambda$ acts by the character $\psi$. By composition with the natural homomorphism from $G_{\ge 1}^F$ to $H_\lambda$, we regard $\rho_{\psi}$ as a representation of $G_{\ge 1}^F$, and according to the uniqueness of $\rho_{\psi}$, we have
\[
\rho_{\psi}=\rm{Ind}^{G_{\ge 1}^F}_{G_{\ge 1.5}^F}\psi.
\]
Since
\[
M^F_X = G^F_\lambda := \{g \in G^F |\rm{Ad}(g)X = X, \rm{Ad}(g)H = H, \rm{Ad}(g)Y = Y \}.
\]
 preserves $\gamma$, it is well-known that there exists a representation of a semi-direct product
$M_X^F\rtimes G_{\ge 1}^F$ which extends the representation $\rho_{\psi }$ of $G_{\ge 1}^F$. We refer to the representation $\omega_{\psi }$ of $M^F_X\rtimes G^F_{\ge 1}$ as the Weil representation associated to $\psi$. Then for a representation $\pi$ of $G^F$, the Hom space
\[
\rm{Hom}_{G_{\ge 1}^F}(\pi,\omega_{\psi })
\]
is naturally a representation of $M_X^F$. In the unitary groups case, the subgroup $M_X$ is also a unitary group. Recall that different choices of character $\psi$ will give the same Weil representation $\omega_{\psi }$ of the unitary group. So the above Hom space does not depend on the choice of character $\psi$.
By Frobenius reciprocity,
\[
\rm{Hom}_{G_{\ge 1}^F}(\pi,\omega_{\psi })\cong\rm{Hom}_{G^F}(\pi,\rm{Ind}^{G^F}_{G_{\ge 1.5}^F}\psi)\cong \rm{Hom}_{G_{\ge 1.5}^F}(\pi,\psi).
\]
If the $F$-rational orbit of $X$ is corresponding to $\lambda$, then we denote this representation of $M_X^F$ by $\rm{Wh}_{\co_\lambda}(\pi)$. We say $\pi$ has a nontrivial generalized Whittaker model on $\co_\lambda$ or $\co_\lambda$ supports $\pi$ if  $\rm{Wh}_{\co_\lambda}(\pi)\ne 0$.

If $\lambda=(\ell,1^{2(n-\ell)})$, then $M_X\cong\UU_{n-\ell}$ and $\rm{Wh}_{\co_\lambda}(\pi)$ is a representation of $\UU_{n-\ell}\fq$. Let $G'=\UU_{n-\ell}$ and $\co_{\lambda'}$ be an $F$-rational nilpotent orbit of $G'$. We say $\co_\lambda\circ\co_{\lambda'}$ supports $\pi$ if
\[
\rm{Wh}_{\co_{\lambda'}}(\rm{Wh}_{\co_\lambda}(\pi))\ne 0.
\]

For a reductive subgroup $R^F$ of $M^F_X$ and a representation $\tau$ of $R^F$, let
\[
\rm{Wh}_{\co,\tau} (\pi) := \textrm{Home}_{R^F\ltimes N^F} (\pi , \tau \otimes \omega_\psi).
\]
(When $R$ is the trivial group, the above model becomes a generalized Whittaker model.)

\section{Theta correspondence and moment map on unitary groups}\label{sec4}

In this section, we recall the image of nilpotent orbits under theta correspondence in \cite{GZ, Z} and write a finite fields version of their results for unitary groups. We follow the notations of \cite{Z}.

\subsection{Theta correspondence for finite unitary groups}
Let $\bb{F}=\Fq$ be a finite field, and $\bb{E}$  be the quadratic field extension of $\bb{F}$. Let $\epsilon\in \{\pm 1\}$, and $V$ be an $\epsilon$-Hermitian $\bb{E}$-vectors space. Let $V'$ be a $-\epsilon$-Hermitian $\bb{E}$-vectors space. Let $\UU(V)$ and $\UU(V')$ be the isometry group of $V$ and $V'$, respectively. Let $W =: \textrm{Hom}_{\bb{E}}(V, V ')$, and we define a symplectic form on $W$ by
setting
\[
\langle T, S\rangle_W:= \textrm{Tr}_{\bb{E}/\bb{F}}(T^*S)
\]
where $\textrm{Tr}_{\bb{E}/\bb{F}}$ is the trace of $T^*S$ as an $\bb{F}$-linear transformation, and $T^*$ is defined by
\[
(T v, v')_{V '} = (v, T^*v')_V ,\ v \in V ,\ v' \in V'.
\]
Let $\sp(W)$ be the isometry group of $\langle \cdot, \cdot\rangle_W$. There is a natural homomorphism: $\UU(V) \times \UU(V') \to \sp(W)$ given by
\[
(g, g') \cdot T = g'T g^{-1}\
for\ T \in \textrm{Hom}_{\bb{E}}(V, V'),\  g \in \UU(V),\  g' \in \UU(V').
\]
We call $(\UU(V) ,\UU(V'))$ a dual pair of $\sp(W)$. To ease notation, we will skip $\bb{E}$ in $\textrm{Hom}_{\bb{E}}(V, V')$ from now on.

Assume that $\rm{dim}W=2N$. Then $\sp(W)\cong \sp_{2N}\fq$.
Let $\omega_{\rm{Sp}_{2N}}$ be the Weil representation (cf. \cite{Ger}) of the finite symplectic group $\rm{Sp}_{2N}(\Fq)$, which depends on a  nontrivial additive character $\psi$ of $\Fq$.
Write
$\omega_{\UU(V),\UU(V')}$ for the restriction of $\omega_{\rm{Sp}_{2N}}$ to $\UU(V) \times\UU(V')$. Then it decomposes into a direct sum
\[
\omega_{\UU(V),\UU(V')}=\bigoplus_{\pi,\pi'} m_{\pi,\pi '}\pi\otimes\pi '
\]
where $\pi$ and $\pi '$ run over irreducible representations of $\UU(V)$ and $\UU(V')$ respectively, and $m_{\pi,\pi'}$ are nonnegative integers.. We can rearrange this decomposition as
\[
\omega_{\UU(V),\UU(V')}=\bigoplus_{\pi} \pi\otimes\Theta_{\UU(V),\UU(V')}(\pi )
\]
 where $\Theta_{\UU(V), \UU(V')}(\pi ) = \bigoplus_{\pi'} m_{\pi,\pi '}\pi '$ is a (not necessarily irreducible) representation of $\UU(V')$, called the (big) theta lifting of $\pi$ from $\UU(V)$ to $\UU(V')$. Write $\pi'\subset \Theta_{\UU(V),\UU(V')}(\pi)$ if $\pi\otimes\pi'$ occurs in $\omega_{\UU(V),\UU(V')}$, i.e. $m_{\pi, \pi'}\neq 0$.  We remark that even if $\Theta_{\UU(V),\UU(V')}(\pi)=\pi'$ is irreducible, one only has
 \[
 \pi\subset \Theta_{\UU(V'),\UU(V)}(\pi'),
 \]
 while equality does not necessarily hold.

If $(\UU(V),\UU(V'))\cong(\UU_n\fq, \UU_{n'}\fq)$, then we denote $\omega_{\UU(V), \UU(V')}$ by $\omega_{n, n'}$, and $\Theta_{\UU(V),\UU(V')}$ by $\Theta_{n,n'}$. In particular, we denote by $\omega_n$ the restriction of $\omega_{\rm{Sp}_{2n}}$ to $\rm{U}_n(\Fq)$.

By  \cite[Lemma 6.2 and Proposition 6.4]{LW2}, we have the following compatibility for the theta lifting and parabolic induction.

\begin{proposition} \label{theta}
Let $\GG_\ell:=\rm{Res}_{\mathbb{F}_{q^2}/\Fq}\GGL_\ell$. Let $\tau$ be an irreducible cuspidal representation of $\GG_\ell\fq$ which is not conjugate self-dual, $\pi$ be an irreducible unipotent representation of $\UU_n(\Fq)$, and  $\pi':=\Theta_{n,n'}(\pi)$. Then we have
\[
\Theta_{n+2\ell, n'+2\ell} (R^{\UU_{n+2\ell}}_{\GG_\ell\times \UU_{n}}(\tau \otimes \pi))= R^{\UU_{n'+2\ell}}_{\GG_\ell\times \UU_{n'}}(\tau\otimes \pi').
\]
\end{proposition}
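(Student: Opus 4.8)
The plan is to deduce Proposition \ref{theta} from the cited results \cite[Lemma 6.2, Proposition 6.4]{LW2} together with the behaviour of theta correspondence under parabolic (Deligne-Lusztig) induction. The key input is that when one inflates a cuspidal representation $\tau$ of $\GG_\ell\fq = \mathrm{Res}_{\Fqq/\Fq}\GGL_\ell\fq$ that is \emph{not} conjugate self-dual, the theta lift essentially ``ignores'' the $\GG_\ell$-factor: lifting $\tau\otimes\pi$ from $\UU_{n+2\ell}$ is controlled by lifting $\pi$ from $\UU_n$, with $\tau$ carried along unchanged. So the first step is to recall precisely what \cite{LW2} gives --- namely a compatibility of the form $\Theta_{n+2\ell,n'+2\ell}\bigl(R^{\UU_{n+2\ell}}_{\GG_\ell\times\UU_n}(\tau\otimes\pi)\bigr)$ is again parabolically induced from $\GG_\ell\times\UU_{n'}$, with the $\GG_\ell$-component $\tau$ unaltered and the $\UU_{n'}$-component being the theta lift of $\pi$ --- and to verify that the unipotence hypothesis on $\pi$ is exactly what is needed to invoke it (unipotence forces $\pi'=\Theta_{n,n'}(\pi)$ to be a genuine irreducible representation, not a virtual character, by the known description of theta lifts of unipotent representations).

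The second step is to reduce the general statement to the cuspidal-unipotent or small-rank base cases already handled in \cite{LW2}. Concretely, I would write $R^{\UU_{n+2\ell}}_{\GG_\ell\times\UU_n}$ as a composite of Deligne-Lusztig inductions --- first induce $\tau$ up one Levi block at a time, or realise $\GG_\ell\fq$ itself as obtained from its torus --- and use transitivity of $R$ (the ``product formula'' $R^G_L\circ R^L_M = R^G_M$ for nested Levis) to reduce to the case where the extra factor is as simple as possible. One then commutes $\Theta$ past each elementary induction step using the seesaw/\cite{LW2} compatibility, tracking at each stage that (i) the $\GG_\ell$-type data $\tau$ is preserved because $\tau$ is not conjugate self-dual (so it never pairs nontrivially with the Weil representation restricted to the $\GG_\ell$-factor of the dual pair), and (ii) the $\UU$-component transforms by $\Theta_{n,n'}$. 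The non-conjugate-self-dual hypothesis is what guarantees that the Levi $\GG_\ell\times\UU_n$ maps into a dual-pair-compatible Levi of the ambient symplectic group with $\GGL_\ell$-part matching on both sides, so that no ``collapse'' or sign ambiguity occurs.

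The main obstacle I anticipate is the bookkeeping of the Weil representation on the relevant Levi subgroups: one must check that the restriction of $\omega_{\mathrm{Sp}_{2N}}$ to the dual pair $(\UU_{n+2\ell},\UU_{n'+2\ell})$, when further restricted to the Levi $\GG_\ell\times\UU_n$ on one side and $\GG_\ell\times\UU_{n'}$ on the other, decomposes as (a Weil representation for the $\GGL_\ell\times\GGL_\ell$ ``doubled'' pair) $\otimes$ (the Weil representation $\omega_{n,n'}$), and that the $\GGL$-part of this, paired against $\tau\otimes\tau^{\vee}$-type data, contributes exactly $\tau$ on the output. Since $\tau$ is not conjugate self-dual this pairing is forced, but making the identification of Lagrangians and the compatibility of the Heisenberg data precise --- essentially the content of \cite[Lemma 6.2]{LW2} --- is the delicate point; everything else is formal manipulation with $R^G_L$ and Frobenius reciprocity. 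Finally one checks both sides are honest representations (not merely virtual), using irreducibility of $\tau$, unipotence of $\pi$, and the fact that $R^G_L$ from an $F$-stable Levi of a parabolic with the relevant positivity is an actual induction functor in the cases at hand, so the identity of virtual characters upgrades to an identity of representations.
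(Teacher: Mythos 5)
Your proposal takes essentially the same route as the paper: the paper gives no independent argument for this proposition and simply quotes \cite[Lemma 6.2 and Proposition 6.4]{LW2}, which is precisely the compatibility of theta lifting with parabolic induction that you reduce everything to (with the non-conjugate-self-dual cuspidal $\tau$ playing the role you describe). One small caveat: unipotence of $\pi$ does not make $\Theta_{n,n'}(\pi)$ irreducible in general --- big theta lifts of unipotent representations can decompose into several irreducibles, as the paper itself uses later --- but this is immaterial here, since the asserted identity is between genuine (possibly reducible) representations rather than irreducible ones.
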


Theta correspondence over finite fields is described explicitly in \cite{AMR, P1, P2}. The following theorem (see \cite[Theorem 2.6]{AMR} or \cite[Proposition 5.15]{P2}) describes the relationship between Theta correspondence and Lusztig correspondence.
\begin{theorem}\label{p1}
 Let $(G, G') = (\UU_{n},\UU_{n'})$, and $\pi\in\cal{E}(G,s)$ and $\pi'\in \cal{E}(G',s')$ for
some semisimple elements $s \in G^*$ and $s'\in (G^{\prime *})$. Write $\cal{L}_s(\pi)=\prod\pi[a]$
and $\cal{L}_{s'}(\pi')=\prod\pi'[a]$. Suppose that $q$ is large enough so that the main result in \cite{S} holds. Assume that the unipotent part $\pi[1]$ (resp. $\pi'[1]$) of $\pi$ (resp. $\pi'$) is an irreducible representation of $\UU_k\fq$ (resp. $\UU_{k'}\fq$).
 Then $\pi \otimes \pi'$ occurs in $\omega_{n,n'}$ if and only if the following conditions hold:
\begin{itemize}

\item $\pi[a]=\pi'[a]$ for each $[a]\ne 1$;

\item $\pi[1]\otimes \pi'[1]$ occurs in $\omega_{k,k'}$.
\end{itemize}
That is, the following commutative diagram:
\[
\setlength{\unitlength}{0.8cm}
\begin{picture}(20,5)
\thicklines
\put(5.8,4){$\pi$}
\put(5.2,2.6){$\cal{L}_s$}
\put(13.8,2.6){$\cal{L}_{s'}$}
\put(9.4,4.2){$\Theta$}
\put(4.3,1){$\pi[1]\otimes\prod_{[a]\ne 1}\pi[a]$}
\put(13.4,4){$\pi'$}
\put(11.8,1){$\pi'[1]\otimes\prod_{[a]\ne 1}\pi'[a]$}
\put(8.5,1.5){$\Theta\otimes\prod_{[a]\ne 1}\rm{id}$}
\put(6,3.6){\vector(0,-1){2.1}}
\put(13.5,3.6){\vector(0,-1){2.1}}
\put(8.2,1.1){\vector(2,0){3}}
\put(8.2,4){\vector(2,0){3}}
\end{picture}
\]

\end{theorem}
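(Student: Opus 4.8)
The plan is to reduce the statement about the dual pair $(\UU_n,\UU_{n'})$ to the unipotent case via Lusztig correspondence, using the compatibility of theta lifting with Deligne-Lusztig (Harish-Chandra) induction in Proposition \ref{theta} as the inductive engine. First I would recall (from \cite{AMR, P2}) the precise behaviour of the Weil representation $\omega_{n,n'}$ under the decomposition along Lusztig series: an irreducible constituent $\pi\in\cal{E}(\UU_n,s)$ pairs with $\pi'\in\cal{E}(\UU_{n'},s')$ only if the "non-unipotent parts" of $s$ and $s'$ agree, i.e. the eigenvalue data indexed by $[a]\neq 1$ must match up. This is the content of the first bullet; geometrically it reflects the fact that $\omega_{n,n'}$, restricted to the product of the non-unitary factors $\prod_{[a]\neq 1}G^{*F}_{[a]}(s)$ appearing on both sides, realises the identity correspondence between general linear (or unitary) factors of the same size, because a representation of $\GGL_m$ occurring in a tensor-product-with-dual type module is forced to be paired with its own contragredient after the standard bookkeeping. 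I would set this up carefully, isolating the factor $G^{*F}_{[a]}(s)$ for each $[a]\ne 1$ and checking the claimed matching; the even multiplicities $2\ell$ in Proposition \ref{theta} are exactly what makes the sizes compatible.

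Next, using the classification in Subsection \ref{sec2.3}, write $\pi$ as (up to sign) a Deligne-Lusztig induction $R^{\UU_n}_{\prod\GG_{\ell_i}\times\UU_k}(\bigotimes_i\tau_i\otimes\pi[1])$ where each $\tau_i$ is a cuspidal representation of a $\GG_{\ell_i}\fq$ corresponding to an eigenvalue class $[a_i]\ne 1$ and $\pi[1]$ is the unipotent part sitting on $\UU_k\fq$; similarly for $\pi'$ on $\UU_{k'}\fq$. Then I would apply Proposition \ref{theta} repeatedly, peeling off one cuspidal factor $\tau_i$ at a time: each application shows that $\Theta_{n,n'}$ of such an induced representation is the induction of $\Theta_{k,k'}(\pi[1])$ against the same tower of $\tau_i$'s, provided the $\tau_i$ data on the two sides agree. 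Combining this with the disjointness of Lusztig series and the fact that Deligne-Lusztig induction of distinct cuspidal data gives representations in distinct series, one concludes that $\pi\otimes\pi'$ occurs in $\omega_{n,n'}$ iff the cuspidal towers match (first bullet) and $\pi[1]\otimes\pi'[1]$ occurs in $\omega_{k,k'}$ (second bullet). The commutative diagram is then just a reformulation of this reduction, with $\cal{L}_s,\cal{L}_{s'}$ the vertical maps.

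One technical point I would need to address is that Proposition \ref{theta} as stated handles a single cuspidal $\tau$ that is \emph{not} conjugate self-dual, and on the nose gives $\Theta_{n+2\ell,n'+2\ell}$ of the induced module; to treat a general $\pi$ I must iterate over all the distinct classes $[a]\ne 1$ and also over multiplicities within one class, i.e. handle $R^{\UU}_{\GG_\ell^{\times m}\times\UU}(\tau^{\otimes m}\otimes\pi)$, which follows by transitivity of $R$ and induction on $m$. The self-dual eigenvalue classes (those $[a]$ with $G^{*F}_{[a]}(s)$ a unitary rather than general linear group) are the subtle case: there the factor is itself a finite unitary group and $\pi[a]$ is a genuinely unipotent representation of it, so one cannot simply "peel it off" as an external tensor factor — instead it must be absorbed into a smaller dual pair. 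I expect this bookkeeping — correctly matching unitary-type eigenvalue factors on the two sides and checking that the theta correspondence between them is again governed recursively by the same theorem — to be the main obstacle; the cleanest route is probably to prove the general statement by induction on the number of distinct eigenvalue classes of $s$, with Proposition \ref{theta} (for the $\GGL$-type classes) and a direct sub-dual-pair argument (for the $\UU$-type classes, reducing $k,k'$) supplying the inductive step, and the base case being the purely unipotent dual pair $(\UU_k,\UU_{k'})$ treated in \cite{AMR}.
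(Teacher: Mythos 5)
The paper does not actually prove Theorem \ref{p1}: it is quoted as a known result of Aubert--Michel--Rouquier and Pan (see \cite[Theorem 2.6]{AMR} and \cite[Proposition 5.15]{P2}), so your task was in effect to reprove their theorem, and your sketch does not close the main gap. The crucial difficulty is exactly the one you defer: for the classes $[a]\ne 1$ on which $G^{*F}_{[a]}(s)$ is a \emph{unitary} group (eigenvalues with $a^{q+1}=1$, $a\ne 1$), the assertion that the theta correspondence acts as the identity on the corresponding factor cannot be extracted from Proposition \ref{theta}, because that proposition requires the cuspidal $\tau$ to be \emph{not} conjugate self-dual, and the cuspidal support attached to such a class involves precisely conjugate self-dual data (or cuspidal representations of smaller unitary groups). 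Your proposed ``direct sub-dual-pair argument'' for these factors is never given, and it is not a routine reduction: naively restricting to a sub-dual-pair would relate the $[a]$-parts by a theta correspondence of two unitary groups of equal size, which is not the identity map in general; the identity statement is the substantive content of the theorem and in \cite{AMR,P2} it is obtained by an entirely different method, namely comparing the uniform projection of the Weil character with explicit linear combinations of Deligne--Lusztig virtual characters and invoking Lusztig's parametrization, not by peeling off cuspidal factors.

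A second, smaller gap: even for the $\GGL$-type classes, Proposition \ref{theta} only yields an identity of (generally reducible) representations, $\Theta$ of an induced module equals the induced module of $\Theta$ of the smaller piece. To conclude the ``if and only if'' statement about occurrence of a specific pair $\pi\otimes\pi'$ (i.e.\ about the multiplicities $m_{\pi,\pi'}$), you must separate irreducible constituents lying in the \emph{same} Lusztig series on both sides; disjointness of series matches the semisimple parameters $s,s'$ but does not by itself show that the constituent of the induced representation labelled by $\pi[1]\otimes\prod_{[a]\ne1}\pi[a]$ pairs exactly with the constituent labelled by $\pi'[1]\otimes\prod_{[a]\ne1}\pi[a]$ rather than with another constituent of the same induced module. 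So as written the proposal is an outline whose two key steps (identity on unitary-type factors, and constituent-level matching) are the hard parts of the cited theorem and remain unproved.
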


\subsection{Moment maps}\label{sec4.2}
In this subsection, we recall the main result in \cite{GZ}, which describes the relationship between Theta correspondence and generalized Whittaker models.
Recall the
moment maps:
\[
\setlength{\unitlength}{0.8cm}
\begin{picture}(20,5)
\thicklines
\put(8.8,4.2){$\textrm{Hom}(V, V')$}
\put(6,1){$\fg$}
\put(13.8,1){$\fg'$}
\put(8.5,3.5){\vector(-1,-1){1.5}}
\put(11.5,3.5){\vector(1,-1){1.5}}
\put(12.5,2.8){$\phi'$}
\put(7.3,2.8){$\phi$}
\end{picture}
\]
where $\phi(T) = T^*T$ and $\phi'(T) = T T^*$.
Let $\fg(V)$ and $\fg(V')$ be the Lie algebras of $\UU(V)$ and $\UU(V')$, respectively.
For a given $\mathfrak{sl}_2$-triple $\gamma= \{X, H, Y \} \subset \fg(V)$, let $\fg_\gamma:=\rm{span}\{X, H, Y \}\subset  \fg(V)$. We have the decompositions
\[
V = \bigoplus_{ k\in\bb{Z}} V_k
\]
and
\[
V = \bigoplus_{ j} V^{\gamma,t_j}
\]
where $V_k = \{v \in V | Hv = kv\}$, and $V^{\gamma,t_j}$ is a direct sum of irreducible $t_j$-dimensional $\fg_\gamma$ modules.
Similar notations apply for an $\mathfrak{sl}_2$-triple $\gamma' = \{X', H', Y'\} \in \fg(V')$.
\begin{definition}
Let $\gamma\subset  \fg(V)$, $\gamma'\subset  \fg(V')$
be two $\mathfrak{sl}_2$-triples, and $T \in \textrm{Hom}(V, V ')$. We
say that $T$ lifts $\gamma$ to $\gamma'$
if
 \begin{itemize}
 \item $ T \in \textrm{Gen Hom}(V, V ')$;
 \item $\phi(T) = X$ and $\phi'(T) = X'$;
 \item $T(V_k)\subset V_{k+1}'$ for all $k$,
 \end{itemize}
 where
 \[
 \textrm{Gen Hom}(V, V') = \{T \in \textrm{Hom}(V, V ')|\textrm{ Ker$(T)$ is non-degenerate}\}.
\]
\end{definition}
We set
\[
 \co_{\gamma,\gamma'}:=\{T \in \textrm{Hom}(V, V')|\textrm{ $T$ lifts $\gamma$ to $\gamma'$}\}.
 \]
According to \cite[Lemma 3.4]{Z}, for any $\mathfrak{sl}_2$-triple $\gamma'=\{X',H',Y'\}\subset \fg(V')$ with $X'$ in the image of $\phi'$, there is a unique
$\mathfrak{sl}_2$-triple $\gamma=\{X,H,Y\}\subset \fg(V)$ such that $\co_{\gamma,\gamma'}$ is non-empty. Moreover, $\co_{\gamma,\gamma'}$ is a single $M(V)_X\times M(V')_{X'}$-orbit where $M(V)_X$ and $M(V')_{X'}$ are  stabilizer groups of $X$ and $X'$ in $\UU(V)$ and $\UU(V')$, respectively.

\begin{definition} We will say that $\gamma$ (resp. $\co$) is the generalized descent of $\gamma'$ (resp. $\co'$) if there exists $T$ lifting $\gamma$ to $\gamma'$. We write
\[
\co = \bigtriangledown^{\rm{gen}}_{V',V}(\co').
\]
When $T \in \co_{\gamma,\gamma'} $ is injective, we call $\gamma$
(resp. $\co$) the descent of $\gamma'$ (resp. $\co'$), and write
\[
\co = \bigtriangledown_{V',V}(\co').
\]
Note that The injective of $T $ implies that
\[
\rm{dim}V\le\rm{dim}V'.
\]

If $\UU(V)\cong\UU_n\fq$ and $\UU(V')\cong\UU_m\fq$, then for simplicity notations, we write  $\bigtriangledown_{m,n}$ (resp. $\bigtriangledown^{\rm{gen}}_{m,n}$) instead of $\bigtriangledown_{V',V}$ (resp. $\bigtriangledown^{\rm{gen}}_{V',V}$).
\end{definition}

The explicit description of $\bigtriangledown^{\rm{gen}}_{V',V}$ for the local fields case is given in \cite[section 3]{GZ}. In the finite fields case, one can describe $\bigtriangledown^{\rm{gen}}_{V', V}$ in a similar way. Here we describe $\bigtriangledown^{\rm{gen}}_{V',V}$ for unitary groups as follows.

\begin{proposition}
    Assume that $\UU(V)\cong\UU_n\fq$ and $\UU(V')\cong\UU_m\fq$. Let $\co'$ be an $F$-rational nilpotent orbit of $\UU(V')$. Assume that $\co$ corresponds to the partition $\lambda'=((a_1+1)^{k_1},\cdots, (a_l+1)^{k_l},2^{j'},1^s)$. Then $\co = \bigtriangledown_{m,n}(\co')$ is the $F$-rational nilpotent orbit of $\UU(V)$ corresponding to the partition $\lambda=(a_1^{k_1},\cdots, a_l^{k_l},1^{j})$ with $j\ge j'$.
\end{proposition}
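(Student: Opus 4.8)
The plan is to reduce the description of $\bigtriangledown^{\mathrm{gen}}_{V',V}$ in the finite field setting to the explicit computation already carried out in \cite[Section 3]{GZ} over local fields, checking that the arguments there are local-to-the-$\mathfrak{sl}_2$-module-structure and hence transfer verbatim once one knows that over $\Fq$ there is a unique equivalence class of non-degenerate Hermitian form in each dimension (Theorem \ref{ra}). Concretely, fix an $\mathfrak{sl}_2$-triple $\gamma'=\{X',H',Y'\}\subset\fg(V')$ of type $\co'$ with $\lambda' = ((a_1+1)^{k_1},\cdots,(a_l+1)^{k_l},2^{j'},1^s)$, so that $V' = \bigoplus_j V^{\gamma',t_j}$ decomposes into isotypic pieces, the piece $V^{\gamma',t}$ being a sum of irreducible $\mathfrak{sl}_2$-modules of dimension $t$. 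By \cite[Lemma 3.4]{Z} (whose finite-field analogue is recalled just above the statement), there is a unique $\gamma=\{X,H,Y\}\subset\fg(V)$ with $\co_{\gamma,\gamma'}$ non-empty, and it is a single $M(V)_X\times M(V')_{X'}$-orbit; the task is to identify the partition $\lambda$ attached to $X$.

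First I would analyze the condition ``$T$ lifts $\gamma$ to $\gamma'$'' one $\mathfrak{sl}_2$-isotypic block at a time. The conditions $\phi(T)=X$, $\phi'(T)=TT^*=X'$ and $T(V_k)\subset V_{k+1}'$ say that $T$ intertwines the $\fg_\gamma$-action on $V$ with the $\fg_{\gamma'}$-action on $V'$ after the degree shift by one; standard $\mathfrak{sl}_2$-representation theory then forces, at the level of isotypic multiplicities, that an irreducible $(t+1)$-dimensional constituent of $V'$ receives the image of an irreducible $t$-dimensional constituent of $V$ (when $T$ is injective), while a $1$-dimensional constituent of $V'$ can only come from the zero module. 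This is exactly the bookkeeping that in \cite{GZ} produces the transition $\lambda' = ((a_i+1)^{k_i},\dots,2^{j'},1^s) \rightsquigarrow \lambda = (a_i^{k_i},\dots,1^{j})$: each column of length $a_i+1$ loses its bottom box, the $2$-blocks become $1$-blocks contributing to the exponent $j$, and the $1$-blocks of $\lambda'$ (the kernel directions, which must be non-degenerate) impose no constraint beyond $j\ge j'$. I would then record that the inequality $j\ge j'$ reflects the freedom in how many trivial $\fg_\gamma$-summands of $V$ map into the $2$-blocks versus lie in a complementary non-degenerate subspace on which $T$ may be chosen, together with the dimension count $\dim V\le\dim V'$.

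The one genuinely non-routine point is verifying that the Hermitian (rather than symplectic or orthogonal) structure is compatible with this block decomposition over $\Fq$, i.e.\ that the form $(\,,\,)_V$ restricts non-degenerately to each $\mathfrak{sl}_2$-isotypic summand $V^{\gamma,t}$ and that the adjoint relation $(Tv,v')_{V'} = (v,T^*v')_V$ together with $T^*T = X$, $TT^* = X'$ can actually be solved over $\Fq$ with the prescribed $\lambda$. Here I would invoke Theorem \ref{ra} (one Hermitian form per dimension) to see that the local-field normal-form computation of \cite{GZ} has a unique finite-field shadow: every equivalence class of Hermitian form that appears in the local classification of $F$-rational orbits collapses to the single class available over $\Fq$, so the pair $(\lambda,(Q(k_i)))$ is determined by $\lambda$ alone, and the output orbit $\co = \bigtriangledown_{m,n}(\co')$ is well-defined and given by $\lambda$ as asserted. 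I expect the main obstacle to be precisely this step --- making the adjunction/Hermitian compatibility explicit enough to quote \cite[Lemma 3.4]{Z} and the GZ normal forms without re-deriving them --- whereas the partition combinatorics itself is a direct transcription of the local case.
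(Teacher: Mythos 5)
Your proposal is correct and takes essentially the same route as the paper: the paper gives no argument beyond asserting that the explicit description of $\bigtriangledown^{\rm{gen}}_{V',V}$ in \cite[Section 3]{GZ} transfers to the finite-field setting, with the single Hermitian class per dimension (Theorem \ref{ra}) collapsing the rational-form bookkeeping, which is exactly the reduction you carry out, only in more detail. Your block-by-block $\mathfrak{sl}_2$-isotypic analysis and the use of \cite[Lemma 3.4]{Z} for uniqueness of $\gamma$ is the intended content behind the paper's citation, so there is nothing further to compare.
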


\begin{definition}
 Let $\gamma$ and $\gamma^\prime$ be as before, and $T \in \co_{\gamma,\gamma^\prime}$,
 $V_{\gamma,\gamma^\prime}:= \rm{Ker}(T)$ and $V'_{\gamma,\gamma^\prime}$ be the space of $\gamma^\prime$-invariants. Let $L$ and $L^\prime$ be the isometry groups of $V_{\gamma,\gamma^\prime}$ and $V'_{\gamma,\gamma^\prime}$, respectively.
We define a group homomorphism
\[
\alpha = \alpha_T : M(V')_{X'} \to M(V)_X
\]
by setting $\alpha_T(m')$  to be the identity map on $\rm{Ker}(T)$, and equal  $T^{-1}m'T$ on $(\rm{Ker}T)^\perp$, for $m'\in M(V')_{X'}$.
We have a direct product
\[
M(V')_{X'} = M(V')_{X,X'} \times L',\textrm{ where }M(V')_{X,X'}=\prod_{j=1}^r  \UU((V^{\prime})^{\gamma',t_{j+1}}_{{t_j}})\times \UU(U_1),
\]
where $V^{\gamma,t_j}_k=V^{\gamma,t_j}\cap V_k$, and $U_1\cong (V')^{\gamma,2}_1$ as non-degenerate $\epsilon$-Hermitian space.
Similarly, we have
\[
M(V)_X = M(V)_{X,X'} \times L,\textrm{ where }M(V)_{X,X'}=\prod_{j=1}^r  \UU((V)^{\gamma,t_{j+1}}_{{t_j}})\times \UU(U),
\]
where $U$ is defined in \cite[P.10]{Z}.
\end{definition}

\begin{theorem} (a finite fields analogy of \cite[Theorem 3.7]{Z}\label{gz})
Let $(G, G')$ be a dual pair. Let $\co$ (resp. $\co'$) be an $F$-rational nilpotent orbit in the Lie algebra $\fg^F$ of $G^F$ (resp. $\fg^{\prime F}$ of $G^{\prime F}$).

(i) Assume that $\co'$ is contained in the image of $\co$ under the moment maps. The associated stabilizer groups $M_X$ and $M'_{X'}$ are of the form
\[
M_X \subset M_{X,X'} \times L, M'_{X'} = M_{X,X'} \times L',
\]
for some reductive dual pair $(L, L')$ of the same type as $(G,G')$. Then
\[
\rm{Wh}_{\co',\tau'} (\Theta(\pi))\cong \rm{Wh}_{\co,\Theta(\tau')^\vee} (\pi^\vee)
\]
as $M_{X,X'}$-modules. Here $\Theta(\tau')$ is the theta lifting of $\tau'$ with respect to the dual pair $(L, L')$.

(ii) If $\co'$ is not in the image of $\co$ under the moment maps, then
\[
\rm{Wh}_{\co'} (\pi) = 0.
\]

\end{theorem}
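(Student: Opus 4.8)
The plan is to deduce this statement, a finite-field analogue of \cite[Theorem 3.7]{Z}, by transporting the original argument of Gomez--Zhu from the local field setting to the finite field setting, where the analytic subtleties disappear and one works directly with finite-dimensional representations and group actions on orbit varieties. The key geometric input has already been isolated in the excerpt: by \cite[Lemma 3.4]{Z} (and its finite-field analogue), for a fixed $\mathfrak{sl}_2$-triple $\gamma'\subset\fg(V')$ with $X'$ in the image of $\phi'$, there is a \emph{unique} $\gamma\subset\fg(V)$ for which $\co_{\gamma,\gamma'}\ne\emptyset$, and $\co_{\gamma,\gamma'}$ is a single $M(V)_X\times M(V')_{X'}$-orbit; moreover the decomposition $M(V)_X=M_{X,X'}\times L$, $M(V')_{X'}=M_{X,X'}\times L'$ is exactly what makes the two sides of (i) carry a common $M_{X,X'}$-action. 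So first I would fix $T\in\co_{\gamma,\gamma'}$, use $\alpha_T$ to identify the relevant unipotent radicals $N$ and $N'$ and their characters $\psi_\gamma$, $\psi_{\gamma'}$ compatibly under the moment maps, and set up the Weil representation $\omega_\psi$ of $\sp(W)$ restricted to $\UU(V)\times\UU(V')$.

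The core of the argument is a "see-saw"-type identity at the level of $\mathrm{Hom}$-spaces. One restricts $\omega_{\UU(V),\UU(V')}$ to the subgroup $(R^F\ltimes N^F)\times(R^{\prime F}\ltimes N^{\prime F})$ appropriate to the two generalized Whittaker data, and computes the twisted coinvariants (equivalently, the $\psi_\gamma\otimes\psi_{\gamma'}$-isotypic quotient) in two different orders. In one order one first takes $N^{\prime F}$-coinvariants against $\psi_{\gamma'}$ inside $\omega$, which by the standard mixed-model computation for the Weil representation (the analogue of the Kudla-style "reduction to smaller Weil representation" over finite fields, cf.\ the moment map picture in \S\ref{sec4.2}) produces the Weil representation of the smaller dual pair $(L,L')$ tensored with the generalized-Whittaker functor applied on the $\UU(V)$-side; taking the remaining $N^F$-coinvariants against $\psi_\gamma$ then yields $\rm{Wh}_{\co,\Theta(\tau')^\vee}(\pi^\vee)$ as an $M_{X,X'}$-module, after pairing with $\tau'$. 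In the other order one gets $\rm{Wh}_{\co',\tau'}(\Theta(\pi))$ directly. Matching the two gives (i); the $M_{X,X'}$-equivariance is automatic because $M_{X,X'}$ acts through its diagonal-type embedding commuting with everything in sight, and the contragredients and the lift $\Theta(\tau')$ appear exactly as in \cite{Z} because the Weil representation is (essentially) self-dual and the dual pair $(L,L')$ is of the same type.

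For part (ii), the point is purely geometric: if $\co'$ is not in the image of $\co$ under the moment map $\phi'$, i.e.\ if $X'$ lies outside $\mathrm{Im}(\phi')$, then no $T$ can satisfy $\phi'(T)=X'$, so the relevant orbit variety that supports the mixed-model matrix coefficients is empty; feeding this into the same coinvariants computation forces $\rm{Wh}_{\co'}(\pi)=0$ for \emph{every} $\pi$ in the image of theta. One must be a little careful that "$\co'$ not in the image of $\co$" is being used in the sense of the moment-map image as in \cite[\S3]{GZ}, and that the finite-field orbit counting (rather than a vanishing-of-distributions argument) gives the clean zero; but this is a counting statement about $\Fq$-points of an empty or lower-dimensional variety and presents no real difficulty.

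I expect the main obstacle to be the first step of the two-order computation, namely the precise mixed-model identity for the Weil representation of $\sp(W)$ under the simultaneous restriction to the two "Whittaker-type" subgroups — making sure that the Lagrangian choices in the definition of $G_{\ge 1.5}$ on both sides are compatible with the polarization of $W$ used to realize $\omega_\psi$, so that the coinvariants really collapse to $\omega_{L,L'}$ with no spurious Jacobi factor. This is where the finite-field analogue of the analytic lemmas of \cite{Z} has to be checked by hand, but once the model is set up correctly it is a finite computation with Gauss sums and the Stone--von Neumann theorem rather than anything genuinely new.
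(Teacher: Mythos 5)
Your plan is essentially the paper's own treatment: the paper gives no independent proof of Theorem \ref{gz}, stating it as a direct finite-field analogue of \cite[Theorem 3.7]{Z} (i.e.\ of the Gomez--Zhu argument in \cite{GZ}), and the intended justification is exactly the transfer you describe --- the mixed-model/coinvariant computation for the Weil representation, with the geometric input that $\co_{\gamma,\gamma'}$ is a single $M(V)_X\times M(V')_{X'}$-orbit and the stabilizer factorization $M_X\subset M_{X,X'}\times L$, $M'_{X'}=M_{X,X'}\times L'$, all simplified over $\Fq$ by the absence of analytic issues, with (ii) following from emptiness of the relevant moment-map fiber. Since the paper offers no further detail, your sketch (including your flagged need to verify the mixed-model identity and that non-relevant strata contribute nothing) is consistent with, and no less complete than, what the paper itself relies on.
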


To prove our main theorem, we need to consider the ``generalized Whittaker model of generalized Whittaker model of an irreducible representation''.
\begin{proposition}\label{key}
Let $G=\UU_n$ and $G'=\UU_m$ with $m>n$, and $\co=\co_{(\ell,1,\cdots,1)}$ be an $F$-rational orbit of nilpotent elements in $\fg^F$. Let $G^*=\UU_{n-\ell}$ and $G^{\prime *}=\UU_{m-\ell-1}$, and $\co^*=\co_{(d_2,\cdots,d_s)}$ be an $F$-rational orbit of nilpotent elements in $\fg^{*F}$. Let
\[
\co^{\prime *}=\bigtriangledown_{{m-\ell-1},{n-\ell}}(\co^*)
\]
and
\[
\co^{\prime }=\bigtriangledown_{{m},{n}}(\co).
\]
For an irreducible representation $\pi\in\rm{Irr}(G^F)$, we have
\[
\rm{Wh}_{\co^{\prime *}}(\rm{Wh}_{\co'}(\Theta_{n,m}(\pi)))\cong \rm{Wh}_{\co^*}(\rm{Wh}_{\co}(\pi^\vee)).
\]
\end{proposition}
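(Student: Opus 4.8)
The plan is to iterate the finite-field moment-map theorem (Theorem \ref{gz}) twice, peeling off one generalized Whittaker model at a time. First I would set up the dual-pair bookkeeping: apply Theorem \ref{gz}(i) to the dual pair $(G,G')=(\UU_n,\UU_m)$ with the orbit $\co=\co_{(\ell,1,\cdots,1)}$ on the $G$-side and $\co'=\bigtriangledown_{m,n}(\co)$ on the $G'$-side. Since $m>n$ and $\co'$ is (by definition of $\bigtriangledown_{m,n}$, and the injectivity built into it) in the image of $\co$ under the moment maps, part (i) applies and yields
\[
\rm{Wh}_{\co',\tau'}(\Theta_{n,m}(\pi))\cong \rm{Wh}_{\co,\Theta(\tau')^\vee}(\pi^\vee)
\]
as $M_{X,X'}$-modules, where the residual dual pair $(L,L')$ is of the same type, i.e. a pair of unitary groups. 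The key point is to identify this residual pair: from the partition data $\co=\co_{(\ell,1^{2(n-\ell)})}$ one reads off $M(V)_X\cong\UU_{n-\ell}$ and $M(V')_{X'}\cong\UU_{m-\ell-1}\times(\text{something})$ — more precisely $M_{X,X'}$ is the common factor and $(L,L')=(\UU(V_{\gamma,\gamma'}),\UU(V'_{\gamma,\gamma'}))$ — so that after the first descent the statement becomes an identity of representations of $G^{*F}=\UU_{n-\ell}$ versus $G^{\prime *F}=\UU_{m-\ell-1}$, with $\Theta(\tau')$ the theta lift along $(L,L')=(\UU_{n-\ell},\UU_{m-\ell-1})$, i.e. exactly $\Theta_{n-\ell,m-\ell-1}$.

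Next I would take $\tau'$ itself to be the generalized Whittaker datum on the $G^{\prime *}$-side: set $\tau'=\rm{Wh}_{\co^{\prime *}}(\cdot)$-type data, more carefully, apply the ``model of a model'' formalism. Unwinding the definition $\rm{Wh}_{\co'}(\Theta_{n,m}(\pi))$ is the $M(V')_{X'}$-module $\rm{Hom}_{N^{\prime F}}(\Theta_{n,m}(\pi),\psi)$, and then $\rm{Wh}_{\co^{\prime *}}$ of it is $\rm{Wh}_{\co^{\prime *},\rm{triv}}$ computed inside $L'=\UU_{m-\ell-1}$. So I would feed $\tau'$ = the generalized Whittaker character attached to $\co^{\prime *}=\bigtriangledown_{m-\ell-1,n-\ell}(\co^*)$ into the displayed isomorphism, and invoke Theorem \ref{gz}(i) a \emph{second} time, now for the dual pair $(L,L')=(\UU_{n-\ell},\UU_{m-\ell-1})$ with orbit $\co^*$ on the $\UU_{n-\ell}$-side and $\co^{\prime *}=\bigtriangledown_{m-\ell-1,n-\ell}(\co^*)$ on the $\UU_{m-\ell-1}$-side. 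This gives
\[
\rm{Wh}_{\co^{\prime *},\rm{triv}}(\Theta(\tau'))\cong\rm{Wh}_{\co^*,\Theta(\rm{triv})^\vee}(\tau'^\vee),
\]
where now the residual dual pair has shrunk further. Chaining the two isomorphisms and tracking the duals (using that theta lifting and $\rm{Wh}$ commute with $\pi\mapsto\pi^\vee$ up to the appropriate twist, and that the trivial representation lifts to the trivial/relevant Weil-type factor which disappears) should collapse the right-hand side to $\rm{Wh}_{\co^*}(\rm{Wh}_{\co}(\pi^\vee))$.

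The main obstacle I expect is the careful matching of stabilizer subgroups and Weil-representation factors across the two applications of Theorem \ref{gz}: one must check that the ``same-type dual pair'' $(L,L')$ produced by the first moment-map descent is precisely $(\UU_{n-\ell},\UU_{m-\ell-1})$ and not shifted by an unexpected amount, that the extra $\UU(U_1)$ / $\UU(U)$ Heisenberg-type factors appearing in the decomposition $M(V')_{X'}=M(V')_{X,X'}\times L'$ either cancel or are absorbed into the $\tau$-twist (this is where the definition of $\rm{Wh}_{\co,\tau}$ with a reductive $R$ does its work), and that the nilpotent orbits $\co^*$ and $\co^{\prime *}$ genuinely satisfy the image-under-moment-map hypothesis of part (i) — if at some stage the relevant orbit falls \emph{outside} the image, one lands in case (ii) and both sides vanish, so one should also check these vanishing cases are consistent. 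A secondary technical point is bookkeeping the contragredients: each application of Theorem \ref{gz}(i) flips $\pi\leftrightarrow\pi^\vee$ and $\tau'\leftrightarrow\tau'^\vee$, so one must verify that two flips compose to the identity on the relevant pieces, and that $\Theta(\rm{triv})^\vee$ acts trivially enough to be dropped; this is routine once the first step's groups are pinned down, but it is the place where a sign or a dual could silently go wrong.
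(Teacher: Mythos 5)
Your proposal is correct and is essentially the paper's own argument: the paper likewise applies Theorem \ref{gz} twice --- once for the residual pair $(\UU_{n-\ell},\UU_{m-\ell-1})$ to move the outer Whittaker functor across, and once (in its $\tau$-twisted form, combined with theta adjunction and the contragredient flips you describe) for the big pair $(\UU_n,\UU_m)$ to identify $\rm{Wh}_{\co'}(\Theta_{n,m}(\pi))$ with the theta lift of $\rm{Wh}_{\co}(\pi^\vee)=\bigoplus_i\pi_i$. The only caveat is notational: since $\tau'$ lives on $L'=\UU_{m-\ell-1}$, its lift in Theorem \ref{gz} is $\Theta_{m-\ell-1,n-\ell}(\tau')$ rather than $\Theta_{n-\ell,m-\ell-1}$, and the outer model must be fed in through irreducible constituents of the residual groups (as the paper does by decomposing $\rm{Wh}_{\co}(\pi^\vee)$) rather than by placing a unipotent Whittaker character in the reductive $\tau'$-slot.
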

\begin{proof}
 We regard $\rm{Wh}_{\co}(\pi^\vee)$ as a representation of $\UU_{n-\ell}$ and apply Theorem \ref{gz}. Assume that  we have the following decomposition
 \[
 \rm{Wh}_{\co}(\pi^\vee)=\bigoplus_i\pi_i.
 \]
 Then we have
\[
\rm{Wh}_{\co^*}(\rm{Wh}_{\co}(\pi^\vee))=
\rm{Wh}_{\co^*}\left(\bigoplus_i\pi_i\right)
\cong \rm{Wh}_{\co^{\prime *}}\left(\bigoplus_i\Theta_{n-\ell,m-\ell-1}(\pi_i)\right).
\]
For each irreducible constituent $\tau$ of $\bigoplus_i\Theta_{n-\ell,m-\ell-1}(\pi_i)$,  we have
\[
\begin{aligned}
&\left\langle\tau,\bigoplus_i\Theta_{n-\ell,m-\ell-1}(\pi_i)\right\rangle_{\UU_{m-\ell-1}\fq}\\
=&\left\langle \Theta_{m-\ell-1,n-\ell}(\tau),\bigoplus_i\pi_i\right\rangle_{\UU_{n-\ell}\fq}\\
=&\left\langle \Theta_{m-\ell-1,n-\ell}(\tau),\rm{Wh}_{\co'}(\pi)\right\rangle_{\UU_{n-\ell}\fq}.
\end{aligned}
\]
We take the contragradient representations on the RHS of the above equation and use Theorem \ref{gz} again,
\[
\langle \Theta_{m-\ell-1,n-\ell}(\tau),\rm{Wh}_{\co'}(\pi)\rangle_{\UU_{n-\ell}\fq}=\langle \Theta_{m-\ell-1,n-\ell}(\tau^\vee),\rm{Wh}_{\co}(\pi^\vee)\rangle_{\UU_{n-\ell}\fq}=\langle\tau,\rm{Wh}_{\co'}(\Theta_{n,m}(\pi))\rangle_{\UU_{n-\ell}\fq}
\]
So
\[
\bigoplus_i\Theta_{n-\ell,m-\ell-1}(\pi_i)\cong \rm{Wh}_{\co'}(\Theta_{n,m}(\pi)),
\]
which completes the proof.

\end{proof}

\section{Gan-Gross-Prasad problem and descents for finite unitary groups}\label{sec5}

\subsection{Construction of descents}\label{sec5.1}

Roughly speaking, the Gan-Gross-Prasad problem concerns the generalized Whittaker model of an irreducible representation $\pi\in\rm{Irr}(G^F)$ on an $F$-rational nilpotent orbits $\co_\lambda$ with $\lambda=(\ell,1,1,\cdots,1)$.
According to the parity of $\ell$, the generalized Whittaker model is called the Bessel or Fourier-Jacobi model. In this paper, we will focus on the finite unitary case, and we realize unitary groups as
\[
G^F=\UU_n\fq:=\left\{x\in \GGL_n(\overline{\bb{F}}_q)|\ F(x)=x\right\}
\]
where
\[
F:(x_{i,j})\to (x^q_{n+1-j,n+1-j})^{-1}.
\]
Now, we give the explicit construction of $\rm{Wh}_{\co_{(\ell,1,\cdots,1)}}(\pi)$ for above unitary groups.

(1) {\bf Fourier-Jacobi model}. Assume that $\ell$ is even, and let $k=\frac{\ell}{2}$. Let $P_\ell=M_\ell N_\ell$ be the parabolic subgroup of $\rm{U}_{n}$ with Levi factor $M_\ell\cong \rm{Res}_{\bb{F}_{q^2}/\Fq}\GGL_1^{k}\times \UU_{n-\ell}$ and its unipotent radical $N_\ell$ can be written in the form
\[
N_{\ell}=\left\{n=
\begin{pmatrix}
z & y & x\\
0 & I_{n-\ell} & y^*\\
0 & 0 & z^*
\end{pmatrix}
: z\in U_{k}
\right\},
\]
where $U_k$ is the subgroup of unipotent upper triangular matrice of $\rm{Res}_{\bb{F}_{q^2}/\Fq}\GGL_k$, and the superscript $*$ sends $z=(z_{i,j})$ (resp. $y=(y_{i,j})$) to $z^*=(z_{k+1-j,k+1-i}^{-q})$ (resp. $y^*=(y_{n-\ell+1-j,k-i}^{-q})$).

Consider the $F$-rational nilpotent orbits $\co_\lambda$ corresponding to $\lambda=(\ell,1,1,\cdots,1)=(2k,1,\cdots,1)$. In this setting, $G_{\ge 1}= N_\ell$ and $M_X=\UU_{n-\ell}$. Let
\[
H:=\UU_{n-\ell}\ltimes N_{\ell}=M_X\ltimes G_{\ge 1}
\]
and
 \[
m(\pi,\pi'):=\rm{dim}\rm{Hom}_{H(\Fq)}(\pi'\otimes \omega_n, \pi)
 \]
with $\pi\in\rm{Irr}(\UU_{n}\fq)$ and $\pi'\in\rm{Irr}(\UU_{n-\ell}\fq)$.

 Define the $k$-th {\sl Fourier-Jacobi quotient} of $\pi$ to be
\begin{equation}\label{lfd}
\CQ_{k}^\rm{FJ}(\pi):=\rm{Wh}_{\co_\lambda}(\pi)=\rm{Wh}_{\co_{(2k,1,\cdots,1)}}(\pi)
\end{equation}
viewed as a representation of $\UU_{n-\ell}(\Fq)$. Define the {\sl first occurrence index of of Fourier-Jacobi quotient} $k_0:=k_{0}^\rm{FJ}(\pi)$ of $\pi$ to be the largest nonnegative integer $k_0\leq n$ such that $\CQ^\rm{FJ}_{k_0}(\pi)\neq 0$. The $k_0$-th Fourier-Jacobi quotient of $\pi$ is called the {\sl first Fourier-Jacobi descent} of $\pi$, denoted by
\begin{equation}\label{fjd}
\CD^\rm{FJ}_{k_0}(\pi) :=\CQ^\rm{FJ}_{k_0}(\pi) \ (\textrm{or simply }\CD^\rm{FJ}(\pi)).
\end{equation}

(2) {\bf Bessel model}. Assume that $\ell$ is odd, and let $k=\frac{\ell-1}{2}$. Let $V_n$ be an n-dimensional space over $\mathbb{F}_{q^2}$ with a nondegenerate Hermitian form $(,)$. Assume that  $V_n$ has a decomposition
\[
V_n=X+V_{n-2k}+X^\vee
\]
where $V_{n-2k}$ is a Hermitian subspace of $V_n$ with dimension $n-2k$, and $X+X^\vee=V_{n-\ell}^\perp$ is a polarization. Let $\{e_1,\ldots, e_\ell\}$ be a basis of $X$, $\{e_1',\ldots, e_k'\}$ be the dual basis of $X^\vee$, and
$X_i=\rm{Span}_{\mathbb{F}_{q^2}}\{e_1,\ldots, e_i\}$, $i=1,\ldots, k$. Let $P_\ell=M_\ell N_\ell$ be the parabolic subgroup of $\UU_n$ stabilizing the flag
\[
X_1\subset\cdots\subset X_k,
\]
so that its Levi component is $M_\ell\cong \rm{Res}_{\bb{F}_{q^2}/\Fq}\GGL_1^{k}\times \UU_{n-\ell+1}$ and its unipotent radical $N_\ell$ can be written in the form
\[
N_{\ell}=\left\{n=
\begin{pmatrix}
z & y & x\\
0 & I_{n-\ell+1} & y^*\\
0 & 0 & z^*
\end{pmatrix}
: z\in U_{k}
\right\},
\]
where $U_k$ is the subgroup of unipotent upper triangular matrices of $\rm{Res}_{\bb{F}_{q^2}/\Fq}\GGL_k$. Pick up an anisotropic vector $v_0\in V_{n-2k}$ and define a generic character $\psi_{\ell, v_0}$ of $N_{\ell}^F$ by
\[
\psi_{\ell, v_0}(n)=\psi\left(\sum^{k-1}_{i=1}z_{i,i+1}+(y_\ell, v_0)\right), \quad n\in N_{\ell}^F,
\]
where $y_\ell$ is the last row of $y$. The stabilizer of $\psi_{\ell, v_0}$ in $M_\ell^F$ is  the unitary group of the orthogonal complement of $v_0$ in $V_{n-2k}$, which will be identified with the unitary group $\UU_{n-\ell+1}(\Fq)$.
Put
\begin{equation}\label{hnu}
H:=\rm{U}_{n-\ell}\ltimes N_{\ell},\quad \nu=\psi_{\ell, v_0}.
\end{equation}
For irreducible representations $\pi$ and $\pi'$ of $\rm{U}_n(\Fq)$ and $\rm{U}_{n-2\ell-1}(\Fq)$ respectively, we set
\[
m(\pi,\pi'):=\dim \rm{Hom}_{H^F}(\pi\otimes\bar{\nu}, \pi')\cong
\rm{Hom}_{\rm{U}_{n-\ell}(\Fq)}(\rm{Wh}_{\co_\lambda}(\pi),\pi').
\]
We simply define the notion of the $k$-th {\sl Bessel quotient} of $\pi$ by
\begin{equation}\label{lbd}
\CQ^\rm{B}_{k}(\pi):=\rm{Wh}_{\co_\lambda}(\pi)=\rm{Wh}_{\co_{(2k+1,1,\cdots,1)}}(\pi),
\end{equation}
viewed as a representation of $\UU_{n-\ell}(\Fq)$. Define the {\sl first occurrence index of Bessel quotient} $k_0:=k_0^\rm{B}(\pi)$ of $\pi$ to be the largest nonnegative integer $k_0<n/2$ such that $\CQ^\rm{B}_{k_0}(\pi)\neq 0$. The $k_0$-th Bessel descent of $\pi$ is called the {\sl first Bessel descent} of $\pi$, denoted
\begin{equation}\label{bed}
\CD^\rm{B}_{k_0}(\pi):=\CQ^\rm{B}_{k_0}(\pi)\ (\textrm{or simply }\CD^\rm{B}(\pi)).
\end{equation}

We set
\[
\CQ_{\ell}(\pi):=\left\{
\begin{array}{ll}
\CQ^\rm{B}_{\frac{\ell-1}{2}}(\pi)  ,&\textrm{if $\ell$ is odd;}\\
\CQ^\rm{FJ}_{\frac{\ell}{2}}(\pi),&\textrm{if $\ell$ is even.}
\end{array}\right.
\]
Define the {\sl essential first occurrence index of descent} or simply descent index $\ell_0:=\ell_0(\pi)$ of $\pi$ to be the largest nonnegative integer such that $\CQ_{\ell_0}(\pi)\neq 0$ and $\CQ_{\ell}(\pi)= 0$ for any $\ell>\ell_0$, i.e. $\ell_0:=\rm{max}\{2k_0^\rm{FJ}(\pi),2k_0^\rm{B}(\pi)+1\}$. The $\ell_0$-th descent of $\pi$ is called the {\sl essential first descent} or simply first descent of $\pi$, denoted
\begin{equation}\label{bd}
\CD_{\ell_0}(\pi):=\left\{
\begin{array}{ll}
\CQ^\rm{B}_{\frac{\ell_0-1}{2}}(\pi)  ,&\textrm{if $\ell_0$ is odd;}\\
\CQ^\rm{FJ}_{\frac{\ell_0}{2}}(\pi),&\textrm{if $\ell_0$ is even.}
\end{array}\right.
\end{equation}

Next, we consider ``the descent of the descent of $\pi$''.  We call a series of irreducible representations $\{\pi_i\}$ for a descent sequence of $\pi$ if
\[
\pi_1\xrightarrow{\ell_1}\pi_2\xrightarrow{\ell_2}\pi_{3}\xrightarrow{\ell_3}\cdots\xrightarrow{\ell_k}\bf{1}
\]
where $\pi_1=\pi$ and $\ell_i=\ell_{0}(\pi_{i})$, and $\pi_i$ appears in $\CD_{\ell_{i-1},  _{i-1}}(\pi_{i-1})$, and the last $\bf{1}$ is the trivial representation of trivial group. We call the array $\ell(\pi):=(\ell_1,\ell_2,\cdots)$ for the descent sequence index of $\pi$ with respect to $\{\pi_i\}$.

\subsection{Calculation of descents}\label{sec5.2}

This subsection aims to recall the descents result for irreducible representations of finite unitary groups in \cite{LW2, LW3, Wang2} and calculate their descent sequence.
\begin{theorem}\label{a1}
Suppose that $q$ is large enough so that the main result in \cite{S} holds. Let $\pi$ be an irreducible representation of $\UU_{n}\fq$ and $\pi'$ be an irreducible representation of $\UU_{m}\fq$ with $n\ge m$.
We have
\begin{equation}\label{eq5.1}
   m(\pi,\pi')  =\prod_{[a]}
 m(\pi[a],\pi'[a])
\end{equation}
 where $\pi[a]$ and $\pi'[a]$ are defined in
 Section \ref{sec2.3}, which are both unipotent representations of finite general linear groups or finite unitary groups, and the multiplicity $m(\pi[a],\pi'[a])$ is defined in Section \ref{sec5.1} for unitary groups and in \cite[p.4]{Wang2} for general linear groups.
   In particular, the irreducible representation $\pi'$ appears in the first descent of $\pi$ if and only if for each $[a]$, $\pi'[a]$ appears in the first descent of $\pi[a]$.

\end{theorem}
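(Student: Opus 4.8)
The plan is to reduce the multiplicity $m(\pi,\pi')$ over $\UU_n\fq\times\UU_m\fq$ to a product of multiplicities over the blocks $G^{*F}_{[a]}(s)$ and $G^{\prime *F}_{[a']}(s')$ appearing in the Lusztig correspondence, and then to package the non-unipotent blocks away so that only the unipotent blocks $\pi[1]$, $\pi'[1]$ carry genuine descent content. First I would set up the standard bookkeeping: write $\mathcal L_s(\pi)=\prod_{[a]}\pi[a]$ and $\mathcal L_{s'}(\pi')=\prod_{[a']}\pi'[a']$, and recall from Proposition \ref{theta} that theta lifting is compatible with Deligne–Lusztig induction by a cuspidal representation $\tau$ of $\GG_\ell\fq$ that is not conjugate self-dual. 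The key observation is that, just as in \cite{LW2}, the Bessel/Fourier–Jacobi period integral defining $m(\pi,\pi')$ is itself compatible with parabolic induction: inducing $\pi$ up from $\GG_\ell\times\UU_{n}$ and $\pi'$ from $\GG_\ell\times\UU_{m}$ by the same $\tau$ leaves the multiplicity $m$ unchanged, because the mixed Hom-space over $H^F$ can be unfolded via a Mackey/Bruhat-decomposition argument and the cross terms vanish for non-conjugate-self-dual $\tau$ (this is where the ``not conjugate self-dual'' hypothesis is essential — it forces all contributing double cosets to be the trivial one). Iterating this peeling-off over all $[a]\neq 1$ with $a$ not in the relevant Galois-stable singleton, one is left with the unipotent parts: $m(\pi,\pi')=m(\pi[1],\pi'[1])\cdot\prod_{[a]\neq 1}m(\pi[a],\pi'[a])$, where for $[a]$ with $G^{*F}_{[a]}(s)$ a general linear group the local factor is the general-linear multiplicity of \cite[p.4]{Wang2}, and for $[a]$ with $G^{*F}_{[a]}(s)$ a unitary group it is the finite-unitary multiplicity from Section \ref{sec5.1}. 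In fact for the factors where $\pi[a]\neq\pi'[a]$ as partitions (or the semisimple data differ) the corresponding $m(\pi[a],\pi'[a])$ must be $0$ and the period integral forces $m(\pi,\pi')=0$, matching Theorem \ref{p1}'s ``$\pi[a]=\pi'[a]$ for each $[a]\neq 1$'' condition; so \eqref{eq5.1} holds as stated with the understanding that almost all factors are trivially $1$.

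Concretely, the steps in order are: (1) express $m(\pi,\pi')$ as a Hom-space over $H^F=\UU_{n-\ell}\ltimes N_\ell$ as in \eqref{hnu} (Bessel) or the analogous $H$ in the Fourier–Jacobi case, tensored against $\omega_n$ or $\bar\nu$; (2) using Proposition \ref{Lus} and the structure \eqref{decomp} of $C_{G^{*F}}(s)$, realize $\pi$ and $\pi'$ as Deligne–Lusztig inductions $R^{\UU_n}_{L}(\tau\otimes\pi_0)$ from Levi subgroups whose ``linear'' part is built from cuspidal $\tau$'s attached to the $[a]\neq 1$; (3) apply Proposition \ref{theta} together with a transitivity-of-induction and Mackey argument to slide the period integral through each such induction, checking that the geometric lemma / Bruhat decomposition produces only the identity double coset so no correction terms appear; (4) read off that the surviving integral is $m(\pi[1],\pi'[1])$ over the smaller unitary pair $(\UU_k,\UU_{k'})$, times the already-computed linear factors. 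The ``in particular'' clause about first descents is then immediate: $\pi'$ appears in the first descent of $\pi$ exactly when $m(\pi,\pi')\neq 0$ for the maximal admissible $\ell$, which by \eqref{eq5.1} happens iff every local factor is nonzero and the unipotent factor is nonzero at its own first-occurrence index — and since the first-occurrence index of $\pi$ decomposes compatibly (the linear blocks contribute in a fixed, $\ell$-independent way once $\pi[a]=\pi'[a]$), this is equivalent to $\pi'[a]$ appearing in the first descent of $\pi[a]$ for each $[a]$.

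The main obstacle I anticipate is Step (3): carefully justifying that the period integral (a Hom-space over a non-reductive group $H^F$ twisted by a Weil representation or a generic character) commutes with Deligne–Lusztig induction on the nose, with no residual terms. For honest parabolic induction this is a Mackey computation, but Deligne–Lusztig induction by a torus in general position is only a virtual character, so one must instead work with the cuspidal-support reduction and invoke the specific results of \cite{LW2} (Lemma 6.2, Proposition 6.4) plus a finite-field analogue of the ``first term identity'' / unfolding used by Ginzburg–Rallis–Soudry — which, happily, is exactly the circle of techniques the paper develops in Section \ref{sec7} via the theta argument on nilpotent orbits (Theorem \ref{gz}, Proposition \ref{key}). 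A secondary, more bookkeeping-heavy point is handling the Galois orbits $[a]$ of size $>1$, where $G^{*F}_{[a]}(s)$ is a general linear group over $\Fqq$ rather than a unitary group, so that the relevant ``$m$'' is the twisted Jacquet multiplicity for $\GGL$ from \cite{Wang2} rather than a Bessel/FJ multiplicity; one must match conventions so that the product formula \eqref{eq5.1} is literally correct rather than correct up to a harmless reindexing.
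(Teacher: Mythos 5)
There is a genuine gap, in two places. First, your side claim that $m(\pi[a],\pi'[a])=0$ whenever $\pi[a]\neq\pi'[a]$ (so that ``almost all factors are trivially $1$'') conflates the matching condition of the theta correspondence in Theorem \ref{p1} with the Gan--Gross--Prasad multiplicities that actually appear in \eqref{eq5.1}: the factors $m(\pi[a],\pi'[a])$ are branching multiplicities between unipotent representations of groups of \emph{different} ranks, and by Theorems \ref{a2}, \ref{a3} and Corollary \ref{d1} they are nonzero exactly when the corresponding partitions are $2$-transverse; in the first-descent situation one has $\cal{L}_{\pi'}([a])=\cal{L}_{\pi}([a])^{-1}\neq\cal{L}_{\pi}([a])$ with multiplicity one. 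Under your reading, \eqref{eq5.1} would be trivialized and the ``in particular'' clause false, and your argument for the descent statement (``the linear blocks contribute in a fixed, $\ell$-independent way once $\pi[a]=\pi'[a]$'') rests on this misreading rather than on the product formula itself.

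Second, your central Step (3) --- sliding the Bessel/Fourier--Jacobi period through (Deligne--Lusztig or parabolic) induction and asserting that only the identity double coset contributes --- is precisely the hard point and is nowhere established; the tools you invoke to patch it (Theorem \ref{gz}, Proposition \ref{key}) concern theta lifting of generalized Whittaker models attached to nilpotent orbits, not a Mackey theory for Hom-spaces twisted by $\omega_n$ or $\bar\nu$. The paper does not argue this way at all: for $n-m$ odd it simply quotes the Bessel-case product formula of \cite[Theorem 1.3]{Wang2}, and for $n-m$ even it \emph{reduces the Fourier--Jacobi case to that Bessel case}, first rewriting $m(\pi,\pi')=\langle\pi\otimes\omega_n,\rm{Ind}^{\UU_n}_{P}(\tau\otimes\pi')\rangle_{\UU_n\fq}$ for a cuspidal $\tau$ whose parameter shares no eigenvalue with $s$ (this is where $q$ large is used, relaxing the unipotence hypothesis of \cite[Proposition 6.5]{LW2}), then constructing $\sigma\in\cal{E}(\UU_{2n}\fq,s'')$ with $\Theta_{2n,n}(\sigma)=\pi$ and applying the see-saw identity for $(\UU_n\times\UU_n,\UU_{2n+1})$ versus $(\UU_n,\UU_{2n}\times\UU_1)$ together with Proposition \ref{theta} and the explicit description of $\Theta_{m,m+n+1}(\pi')$, so that everything becomes a sum of Bessel multiplicities $m(\sigma,\sigma_{\mu'})$ already covered by the odd case. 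This see-saw reduction is the key idea missing from your proposal; without it (or an actual proof of your Step (3)), the product formula is not obtained.
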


\begin{proof}
If $n-m$ is odd, in the Bessel case, the first part of this theorem follows from \cite[Theorem 1.3]{Wang2}.

Assume that $n-m$ is even, it is the Fourier-Jacobi. We can reduce the multiplicity into the basic case as in \cite[Proposition 6.5]{LW2}. We have
\begin{equation}\label{bs1}
m(\pi,\pi')=\langle \pi\otimes \omega_n,\rm{Ind}^{\UU_n}_{P}(\tau\otimes \pi')\rangle_{\UU_n\fq},
\end{equation}
where $P$ is an $F$-stable parabolic subgroup of $\UU_n$ with Levi factor $\rm{Res}_{\bb{F}_{q^2}/\Fq}\GGL_\ell\times \UU_{n'}$, and $\tau$ is an irreducile cuspidal representation of $\GGL_\ell(\bb{F}_{q^2})$.
In \cite[Proposition 6.5]{LW2}, we assume that $\pi$ is unipotent. However, in the proof of \cite[Proposition 6.5]{LW2}, the assumption of $\pi$ is only used to get
\begin{equation}\label{bs2}
\langle \pi,\rm{Ind}^{\UU_n}_{P}(\tau\otimes \pi'')\rangle_{\UU_n\fq}=0
\end{equation}
for any $\pi''\in \rm{Irr}(\UU_m\fq)$. Assume that $\pi\in \cal{E}(\UU_n\fq,s)$ and $\tau\in \cal{E}(\GGL_k(\bb{F}_{q^2}),t)$. If we additionally assume that $t$ and $s$ have no common eigenvalues, then we still have (\ref{bs2}), and so is (\ref{bs1}). This kind of $t$ always exists because we assume that the finite field $\Fq$ is large enough.

 We now turn to calculate the RHS of (\ref{bs1}). Recall that unipotent representations of unitary groups are parametrized by partitions. Assume that $\cal{L}_\pi([1])=\lambda=(\lambda_1,\cdots,\lambda_h)$ and $\cal{L}_{\pi'}([1])=\lambda'=(\lambda'_1,\cdots,\lambda'_{h'})$. Let $d=\sum\lambda_i$ and $d'=\sum\lambda_i'$. Consider an irreducible representation $\sigma\in\cal{E}(\UU_{2n}\fq,s'')$ satisfying the follows conditions:
\begin{itemize}

\item $\cal{L}_{s''}(\sigma)=\prod_{[a]}\sigma[a]$;

\item $\sigma[a]=\pi[a]$ for each $[a]\ne 1$;

\item $\cal{L}_{\sigma}([1])=(n,\lambda_1,\cdots,\lambda_h)$.
\end{itemize}
It follows \cite[Corollary 3.5]{LW3} that $\Theta_{n+d,d}(\sigma[1])=\pi[1]$. Then by Theorem \ref{p1}, $\Theta_{2n,n}(\sigma)=\pi$.
Consider the see-saw diagram
\[
\setlength{\unitlength}{0.8cm}
\begin{picture}(20,5)
\thicklines
\put(6.8,4){$\UU_{n}\times \UU_{n}$}
\put(7.3,1){$\UU_{n}$}
\put(12.2,4){$\UU_{2n+1}$}
\put(11.9,1){$\UU_{2n}\times \UU_1$}
\put(7.7,1.5){\line(0,1){2.1}}
\put(12.8,1.5){\line(0,1){2.1}}
\put(8,1.5){\line(2,1){4.2}}
\put(8,3.7){\line(2,-1){4.2}}
\end{picture}
\]
By the see-saw identity, one has
\[
\begin{aligned}
&\langle  \pi\otimes \omega_n,\rm{Ind}^{\UU_n}_P(\tau\otimes \pi')\rangle_{\UU_n\fq}\\
=& \langle \Theta_{2n,n}(\sigma)\otimes \omega_n,\rm{Ind}^{\UU_n}_P(\tau\otimes \pi')\rangle_{\UU_n\fq}\\
=& \langle \sigma,\Theta_{n,2n+1}(\rm{Ind}^{\UU_n}_P(\tau\otimes \pi'))\rangle_{\UU_{2n}\fq}.\\
\end{aligned}
\]

We now calculate $\Theta_{n,2n+1}(\rm{Ind}^{\UU_n}_P(\tau\otimes \pi'))$. According to Proposition \ref{theta}, we have
\[
\Theta_{n,2n+1}(\rm{Ind}^{\UU_n}_P(\tau\otimes \pi'))=\rm{Ind}^{\UU_{2n+1}}_{P'}(\tau\otimes \Theta_{m,m+n+1}(\pi')),
 \]
 where $P'$ is the an $F$-stable parabolic subgroup of $\UU_{2n+1}$ with Levi factor $\rm{Res}_{\bb{F}_{q^2}/\Fq}\GGL_\ell\times \UU_{m+n+1}$. Applying  Theorem \ref{p1} on $\Theta_{m,m+n+1}(\pi')$, we know that each irreducible component of $\Theta_{m,m+n+1}(\pi')$ is in the same Lusztig series, and we denote the Lusztig series by $\cal{E}(\UU_{m+n+1}\fq,s')$. Moreover, by Theorem \ref{p1} and \cite[Proposition 3.4]{LW3}, $\Theta_{m,m+n+1}(\pi')$ can be described explicitly as follows:
\[
\Theta_{m,m+n+1}(\pi')=\oplus_{\mu'}\sigma'_{\mu'}
\]
where $\sigma'_{\mu'}$ is an irreducible such that
\begin{itemize}

\item $\cal{L}_{s''}(\sigma'_{\mu'})=\prod_{[a]}\sigma'_{\mu'}[a]$;

\item $\sigma'_{\mu'}[a]=\pi'[a]$ for each $[a]\ne 1$;

\item $\cal{L}_{\sigma'_{\mu'}}([1])=\mu'$,
\end{itemize}
and the sum runs over partitions $\mu'$ such that $\pi_{\mu'}$ appears in $\Theta_{d',n+1+d'}(\pi'[1])$.
Then we have
\[
\begin{aligned}
&\langle \sigma,\Theta_{n,2n+1}(\rm{Ind}^{\UU_n}_P(\tau\otimes \pi'))\rangle_{\UU_{2n}\fq}\\\
=&\langle \sigma,\oplus_{\mu'}   \rm{Ind}^{\UU_{2n+1}}_{P'}(\tau\otimes \sigma_{\mu'})\rangle_{\UU_{2n}\fq}\\
=&\sum_{\mu'}  \langle \sigma, \rm{Ind}^{\UU_{2n+1}}_{P'}(\tau\otimes \sigma_{\mu'})\rangle_{\UU_{2n}\fq}\\
=&\sum_{\mu'} m(\sigma,\sigma_{\mu'})
\end{aligned}
\]
Here $m(\sigma,\sigma_{\mu'})$ is the multiplicity in the Bessel case, so we have
\[
\sum_{\mu'}m(\sigma,\sigma_{\mu'})=\sum_{\mu'}\prod_{[a]}m(\sigma[a],\sigma_{\mu'}[a])=m(\pi_{(n,\lambda_1,\cdots,\lambda_h)},\oplus_{\mu'}\pi_{\mu'})\cdot\prod_{[a]\ne 1}m(\pi[a],\pi'[a]).
\]
With a similar see-saw argument, we get
\[
m(\pi[1],\pi'[1])=m(\pi_{(n,\lambda_1,\cdots,\lambda_h)},\Theta_{d',n+1+d'}(\pi_{\lambda'}))=m(\pi_{(n,\lambda_1,\cdots,\lambda_h)},\oplus_{\mu'}\pi_{\mu'}).
\]
So
\[
m(\pi,\pi')=\sum_{\mu'}m(\sigma,\sigma_{\mu'})=m(\pi_{(n,\lambda_1,\cdots,\lambda_h)},\oplus_{\mu'}\pi_{\mu'})\cdot\prod_{[a]\ne 1}m(\pi[a],\pi'[a])=\prod_{[a]}m(\pi[a],\pi'[a]).
\]

Assume that $\pi'$ appears in the first descent of $\pi$, and there exist $[a^*]$ such that $\pi[a^*]$ does not appear in the first descent of $\pi[a^*]$. By our decomposition of $m(\pi,\pi')$, we know that $m(\pi[a^*],\pi'[a^*])\ne 0$. Let $\sigma[a^*]$ be an irreducible representation in the first descent of $\pi[a^*]$. Then the group of $\sigma[a^*]$' is smaller then the group of $\pi'[a^*]$. Let $\sigma'$ be an irreducible representation such that its image of Lusztig correspondence $\prod_{[a]}\sigma'[a]$ satisfies the following conditions:
\begin{itemize}

\item $\sigma'[a]=\pi'[a]$ for each $[a]\ne [a^*]$;

\item $\sigma'[a]=\sigma[a^*]$.
\end{itemize}
Hence we get an irreducible representation $\sigma'$ living in a group smaller than the group of  $\pi'[a]$, and $m(\pi,\sigma')\ne 0$. This is a contradiction with the definition of the first descent. So for each $[a]$, $\pi'[a]$ have to appear in the first descent of $\pi[a]$. The proof of the only if part is similar, which is left to readers.

\end{proof}

In our previous work, we have calculated the descent of unipotent representations of unitary groups in \cite{LW2, LW4}:

\begin{theorem}\label{a2}\cite[Theorem 1.1]{LW2}
Let $\lambda$ be a partition of $n$ into $k$ rows, and $\lambda^{-1}$ be the partition of $n-k$ obtained by removing the first column of $\lambda$. Let $\pi\in\rm{Irr}(\UU_{m}\fq)$. Then
\[
m(\pi_\lambda,\pi')=0
\]
if $m< n-k$. Moreover, for $m=n-k$,
\[
m(\pi_\lambda,\pi')=
\left\{
  \begin{aligned}
 1&\textrm{ if }\pi'=\pi_{\lambda^{-1}};\\
  0&\textrm{ otherwise}.
\end{aligned}
\right.
\]
\end{theorem}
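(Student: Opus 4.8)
The plan is to compute $m(\pi_\lambda,\pi')$ in two stages: first reduce to the case where $\pi'$ is unipotent and the relevant model is of Bessel type, and then expand the Bessel descent of a unipotent representation in Deligne--Lusztig characters and identify the answer with a Pieri-type branching multiplicity for symmetric groups. Since the Bessel and Fourier--Jacobi models respect the partition of $\rm{Irr}$ into Lusztig series --- the mechanism behind Theorem \ref{a1} --- the multiplicity $m(\pi_\lambda,\pi')$ vanishes unless $\pi'$ is unipotent, say $\pi'=\pi_\mu$ for a partition $\mu$ of $m$; and when $n-m$ is even (the Fourier--Jacobi case) the see-saw argument used in the proof of Theorem \ref{a1}, together with the compatibility of theta lifting with parabolic induction (Proposition \ref{theta}) and with Lusztig correspondence (Theorem \ref{p1}), converts $m(\pi_\lambda,\pi_\mu)$ into a Bessel multiplicity for a pair of larger unitary groups, absorbing the Weil representation $\omega_n$. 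So it is enough to treat the Bessel case, in which the model is $\langle\,\pi_\lambda\,,\,\rm{Ind}_{H^F}^{\UU_n\fq}(\bar\nu\boxtimes\pi_\mu)\,\rangle$ with $H=\UU_m\ltimes N_\ell$, $\ell=n-m$ and $\nu=\psi_{\ell,v_0}$.

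I would then unfold this induced character through the parabolic $P_\ell$, substitute $\pi_\lambda=\pm R_{\sigma_\lambda}^{\UU_n}$ and $\pi_\mu=\pm R_{\sigma_\mu}^{\UU_m}$ from Theorem \ref{thm3.3}, expand in the basis $\{R_{T_w,1}\}$, and evaluate the resulting scalar product by the Mackey/scalar-product formula for Deligne--Lusztig characters together with the multiplicity-freeness of the Gelfand--Graev functional on finite general linear groups. After collecting the Weyl-group terms, the outcome, in the range $m\le n-k$ that we need, should be a Pieri-type count: up to a sign-twist coming from the $w_0$'s in Theorem \ref{thm3.3},
\[
m(\pi_\lambda,\pi_\mu)=\begin{cases}1,&\textrm{if }\mu\subseteq\lambda\textrm{ and }\lambda/\mu\textrm{ is a vertical strip},\\ 0,&\textrm{otherwise},\end{cases}
\]
which is exactly the Pieri rule for multiplication by the elementary symmetric function $e_{n-m}$.

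The combinatorial conclusion is then immediate. A vertical strip inside the Young diagram of a partition with $k$ rows contains at most one box in each row, hence at most $k$ boxes; so if $m<n-k$ there is no $\mu$ of size $m$ with $\lambda/\mu$ a vertical strip, and $m(\pi_\lambda,\pi_\mu)=0$. Combined with the vanishing for non-unipotent $\pi'$, this gives the first assertion. (Alternatively, for $m<n-k$ one may argue geometrically: the orbit $\co_{(n-m,1,\dots,1)}$ with $n-m>\lambda^t_1$ is not $\le$ the wavefront set $\wco_{\lambda^t}$ of the unipotent representation $\pi_\lambda$, so $\rm{Wh}_{\co_{(n-m,1,\dots,1)}}(\pi_\lambda)=0$ by the bounds of Kawanaka and Lusztig \cite[Theorem 11.2]{L7}.) For $m=n-k$ a vertical strip of $k$ boxes in a $k$-row diagram must have exactly one box in each row, which forces $\mu_i=\lambda_i-1$ for every $i$, i.e.\ $\mu=\lambda^{-1}$; thus $m(\pi_\lambda,\pi')=1$ precisely for $\pi'=\pi_{\lambda^{-1}}$ and $0$ otherwise.

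The step I expect to be the main obstacle is the Mackey/Green-function bookkeeping that turns the Bessel model into the Pieri count: one has to control the interaction of the generic character $\psi_{\ell,v_0}$ on $N_\ell$ with the $F$-stable maximal tori of $\UU_n$, the sign factors $\varepsilon_G$, and the longest-element twists, and --- most delicately --- handle the case $n=\frac{k(k+1)}{2}$, $\lambda=[k,k-1,\dots,1]$, where $\pi_\lambda$ is unipotent \emph{cuspidal} and so is not a naive combination of principal-series characters: there the reduction to symmetric-group combinatorics must instead go through the full Lusztig--Srinivasan parametrization, or through a see-saw/theta argument using Proposition \ref{theta}, Theorem \ref{p1} and the explicit theta lifts of unipotent representations \cite{AMR, P2, LW3} to peel off the cuspidal constituent.
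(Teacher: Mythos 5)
First, note that the paper itself does not prove this statement: it is imported verbatim as \cite[Theorem 1.1]{LW2}, and the proof there proceeds by an induction through the theta correspondence (first occurrence of unipotent cuspidal representations, see-saw identities of the kind used in the proof of Theorem \ref{a1} here), not by a Deligne--Lusztig/Mackey computation. Measured against that, your proposal has two genuine gaps. The first is the opening reduction: the assertion that $m(\pi_\lambda,\pi')$ vanishes unless $\pi'$ is unipotent ``because the models respect Lusztig series'' is false as stated. Descents of unipotent representations do contain non-unipotent constituents: by Theorem \ref{a1} a component $\pi'[b]$ with $b\ne 1$ and partition $(1,\dots,1)$ pairs with the trivial group with multiplicity $1$, and Corollary \ref{d1}(i) exhibits exactly such non-unipotent $\pi'$ with $m(\pi,\pi')=1$; concretely, the $q$-dimensional unipotent representation of $\UU_2\fq$ restricted to $\UU_1\fq$ contains many non-unipotent characters. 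Excluding these at levels $m\le n-k$ is part of what has to be proved (via the product formula of Theorem \ref{a1} together with an induction on the unipotent-versus-unipotent case), not an a priori input; and leaning on Theorem \ref{a1} is delicate anyway, since its proof draws on \cite{LW2,LW3}.

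The second and more serious gap is the central computation. You claim that expanding $\pi_\lambda=\pm R^{\UU_n}_{\sigma_\lambda}$ and evaluating the Bessel functional by Mackey theory yields the vertical-strip Pieri rule. No mechanism is actually given (there is no Mackey-type formula evaluating Bessel or Fourier--Jacobi functionals on virtual Deligne--Lusztig characters, and multiplicity-freeness of the Gelfand--Graev functional on $\GGL_n$ does not control a unitary Bessel model), and the formula it would produce is wrong: by Theorem \ref{a3} the correct rule is $2$-transversality, and for $\lambda=(2,1)$, $\mu=(2)$ (so $m=n-1$) the skew shape $\lambda/\mu$ is a vertical strip while $\lambda,\mu$ are not $2$-transverse, so the multiplicity is $0$, not $1$. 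The two rules do coincide in the range $m\le n-k$, but a derivation that would give the vertical-strip rule uniformly cannot be correct, so the agreement in the needed range is fortuitous rather than established. The difficulty you defer to the end --- the unipotent cuspidal constituents --- is precisely the crux, and it is why \cite{LW2} argues through theta lifting rather than symmetric-group branching. Finally, your alternative geometric argument for the vanishing when $m<n-k$ (the hook orbit $(n-m,1,\dots,1)$ does not lie in the closure of $\wco_{\lambda^t}$) is sound only if one already knows that the wavefront set of $\pi_\lambda$ is $\wco_{\lambda^t}$; inside this paper that is Theorem \ref{main1}, which is deduced using Theorem \ref{a2}, so the argument would be circular unless sourced externally from Lusztig's unipotent-support results, and in any case it gives nothing toward the multiplicity-one assertion at $m=n-k$.
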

\begin{theorem}\cite[Theorem 1.1]{LW4}\label{a3}
Assume that $n \ge m$. Let $\lambda$ and $\lambda'$ be partitions of $n$ and $m$ respectively. We say that $\lambda$ and $\lambda'$ are  2-transverse if $|\lambda_i-\lambda_i'|\le1$ for every $i$ and $\#\{i|\lambda_i=\lambda'_i=j\}$ is even for any $j>0$.
Then
\[
m(\pi_\lambda, \pi_{\lambda'})=\langle\pi_{^t\lambda}\otimes\pi_{^t\lambda'},\omega_{n,m}\rangle_{\UU_n\fq\times\UU_m\fq}=\left\{
\begin{array}{ll}
1, &  \textrm{if }\ \lambda \textrm{ and } \lambda' \textrm{ are }  2\textrm{-transverse},\\
0, & \textrm{otherwise,}
\end{array}\right.
\]
where $\omega_{n,m}$ is the Weil representation of $\UU_n\fq\times\UU_m\fq$.
\end{theorem}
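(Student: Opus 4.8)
The plan is to prove the two asserted equalities separately: first the identity
\[
m(\pi_\lambda,\pi_{\lambda'})=\langle\pi_{{}^t\lambda}\otimes\pi_{{}^t\lambda'},\omega_{n,m}\rangle_{\UU_n\fq\times\UU_m\fq},
\]
and then the evaluation of this common quantity as the indicator of the $2$-transverse condition. Throughout put $\mu={}^t\lambda$ and $\mu'={}^t\lambda'$; the transpose should be traced to the fact that for unipotent representations of $\UU_n\fq$ the Alvis--Curtis dual of $\pi_\lambda$ is $\pm\pi_{{}^t\lambda}$, or, equivalently, that Lusztig's partition label and the Lusztig-symbol label governing theta correspondence differ by transposition.

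For the first identity I would run an iterated see-saw argument in the style of the proof of Theorem~\ref{a1}. Starting from $m(\pi_\lambda,\pi_{\lambda'})=\dim\rm{Hom}_{H^F}(\pi_\lambda\otimes\bar\nu,\pi_{\lambda'})$ in the Bessel case, and from the analogous period twisted by the Weil representation $\omega_n$ in the Fourier--Jacobi case, realize $\pi_\lambda$ as a theta lift of a unipotent representation of a larger unitary group, with one part prepended to $\lambda$, as in the results of \cite{LW2,LW3} quoted above. Applying the see-saw
\[
\begin{matrix}
\UU_{n}\times\UU_{n} & & \UU_{2n+1}\\
\cup & & \cup\\
\UU_{n} & & \UU_{2n}\times\UU_{1}
\end{matrix}
\]
together with Proposition~\ref{theta} (to move theta past parabolic induction) and Theorem~\ref{p1}, this converts $m(\pi_\lambda,\pi_{\lambda'})$ into a sum of theta multiplicities between unipotent representations of a pair of unitary groups whose dimension difference has dropped; iterating, i.e. peeling off one $\GGL_1(\Fqq)$-factor of the relevant parabolic at each stage exactly as in the inductive step of the proof of Theorem~\ref{a1}, collapses the whole descent tower into a single theta pairing on $(\UU_n,\UU_m)$. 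Tracking which parts are added and deleted at each see-saw step, with Theorem~\ref{a2} (the maximal descent of $\pi_\lambda$ deletes the first column of $\lambda$, equivalently the first part of $\mu$) as the seed, produces the pairing $\langle\pi_\mu\otimes\pi_{\mu'},\omega_{n,m}\rangle$.

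For the second part, by Theorem~\ref{p1} the quantity $\langle\pi_\mu\otimes\pi_{\mu'},\omega_{n,m}\rangle$ is governed by the unipotent theta correspondence, which is described explicitly in \cite{AMR,P1,P2} via Lusztig symbols: $\Theta_{n,m}(\pi_\mu)$ is a multiplicity-free sum $\bigoplus_\nu\pi_\nu$ over those partitions $\nu\vdash m$ whose symbol is obtained from that of $\mu$ by the prescribed insertion, so $\langle\pi_\mu\otimes\pi_{\mu'},\omega_{n,m}\rangle\in\{0,1\}$ automatically. It then remains to prove the combinatorial equivalence that $\mu'$ occurs in $\Theta_{n,m}(\pi_\mu)$ if and only if $\lambda$ and $\lambda'$ are $2$-transverse. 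I would split along Harish--Chandra series (equivalently, along $2$-cores of $\lambda$, on which both theta and the descent act compatibly), reducing to a cuspidal datum and a Pieri-type count within each series; then the bound $|\lambda_i-\lambda'_i|\le 1$ matches an interlacing of the $\beta$-sets of $\mu$ and $\mu'$, while the parity requirement that $\#\{i:\lambda_i=\lambda'_i=j\}$ be even matches the condition that each block of aligned equal parts contribute a $\GGL$- or $\UU$-factor with nonvanishing unipotent theta multiplicity --- the same even/odd dichotomy that appears for the Weil representation of a general linear group over the quadratic extension $\Fqq$. One can alternatively carry out the second part by induction on $m$ starting from Theorem~\ref{a2}, using the parabolic filtration of $\omega_{n,m}$ to pass from $\UU_m$ to smaller groups.

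The main obstacle is the combinatorial core of the second part: proving that the symbol/interlacing description of unipotent theta occurrence is exactly the $2$-transverse condition, and in particular accounting for the evenness constraint on aligned equal parts. The see-saw reductions and the transpose bookkeeping are essentially formal --- they follow the argument already carried out for Theorem~\ref{a1} --- whereas matching the symbol combinatorics of theta correspondence with the partition combinatorics of the Gan--Gross--Prasad descent, uniformly across all Harish--Chandra series, is where the real work lies.
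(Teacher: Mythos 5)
The first thing to say is that this paper contains no proof of the statement: Theorem \ref{a3} is imported verbatim from \cite[Theorem 1.1]{LW4}, so the ``paper's own proof'' is just the citation, and there is nothing internal to compare your argument against. Your outline does reconstruct the strategy that the cited work in fact follows --- reduce the Bessel/Fourier--Jacobi multiplicity to a theta-type pairing via see-saw dual pairs and the compatibility of theta lifting with parabolic induction (the same ingredients used in the proof of Theorem \ref{a1} above), and then evaluate that pairing using the explicit unipotent Howe correspondence of Aubert--Michel--Rouquier and Pan --- so as a plan it points in the right direction.

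As a proof, however, it has genuine gaps, which you yourself partly flag. First, the identity $m(\pi_\lambda,\pi_{\lambda'})=\langle\pi_{^t\lambda}\otimes\pi_{^t\lambda'},\omega_{n,m}\rangle_{\UU_n\fq\times\UU_m\fq}$ is only asserted to follow by ``iterating'' the see-saw of Theorem \ref{a1}: you do not specify which auxiliary unipotent representations are lifted at each stage, why the FJ and Bessel cases interleave consistently, or why the tower collapses to a single pairing on $(\UU_n,\UU_m)$ with exactly the transposed partitions appearing; attributing the transpose to Alvis--Curtis duality is a heuristic, not an argument. Second, and more seriously, the actual content of the theorem --- that occurrence of $\pi_{^t\lambda}\otimes\pi_{^t\lambda'}$ in $\omega_{n,m}$ is equivalent to $2$-transversality of $(\lambda,\lambda')$, including the parity condition $\#\{i:\lambda_i=\lambda'_i=j\}$ even, and that the multiplicity is then exactly $1$ --- is precisely the step you defer as ``where the real work lies.'' Until that combinatorial identification is carried out (via symbols/$\beta$-sets, uniformly across Harish--Chandra series), the proposal is a road map rather than a proof; within the present paper the correct and sufficient move is simply to cite \cite[Theorem 1.1]{LW4}, as the authors do.
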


By the same argument, a similar result holds for general linear groups. This branching problem from $\GGL_n\fq$ to $\GGL_{n-1}\fq$ is also fully analysed in \cite{T}.

\begin{theorem}\label{ggp}
Let $\pi\in \rm{Irr}(\UU_n\fq)$ (resp. $\pi'\in \rm{Irr}(\UU_m\fq)$). Assume that $n\ge m$. Then
\[
m(\pi, \pi')=\left\{
\begin{array}{ll}
1, &  \textrm{if }\ \cal{L}_\pi([a]) \textrm{ and } \cal{L}_{\pi'}([a]) \textrm{ are }  2\textrm{-transverse for every }[a],\\
0, & \textrm{otherwise,}
\end{array}\right.
\]
where $\cal{L}_\pi$ is defined in subsection \ref{sec2.3}.
\end{theorem}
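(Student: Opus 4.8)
The statement is essentially a repackaging of the unipotent branching results already recorded, so the plan is to reduce to the unipotent case via the Lusztig-correspondence factorisation and then read off each factor. First I would invoke Theorem \ref{a1} (legitimate under the standing hypothesis that $q$ is large): since $n\ge m$, it gives
\[
m(\pi,\pi')=\prod_{[a]}m(\pi[a],\pi'[a]),
\]
where, for each $[a]$, the pair $(\pi[a],\pi'[a])$ consists of unipotent representations of $G^{*F}_{[a]}(s)$ and $G^{\prime *F}_{[a]}(s')$, each of which is a finite general linear group or a finite unitary group. By the classification recalled in Section \ref{sec2.3}, $\pi[a]$ is the unipotent representation labelled by the partition $\cal{L}_\pi([a])$, and likewise $\pi'[a]$ is labelled by $\cal{L}_{\pi'}([a])$.

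Next I would compute each factor. When $G^{*F}_{[a]}(s)$ and $G^{\prime *F}_{[a]}(s')$ are unitary groups, Theorem \ref{a3} applies directly: $m(\pi[a],\pi'[a])=1$ if $\cal{L}_\pi([a])$ and $\cal{L}_{\pi'}([a])$ are $2$-transverse and $0$ otherwise. When they are general linear groups, the analogous statement for $\GGL$ is used instead; this follows either by rerunning the Deligne--Lusztig and see-saw argument of \cite{LW4} in the split case, or from the complete description of the branching from $\GGL_n\fq$ to $\GGL_{n-1}\fq$ in \cite{T}, and it again yields $1$ exactly when the two partitions are $2$-transverse. Two small bookkeeping points deserve attention here: (i) it can happen that $|\cal{L}_\pi([a])|<|\cal{L}_{\pi'}([a])|$ for an individual $[a]$ even though $n\ge m$, so one must apply the relevant branching result with the larger partition playing the role of the ambient group and use that $2$-transversality is a symmetric relation on partitions; (ii) for all but finitely many $[a]$ both component groups are trivial, the partitions are then $(0)$ and $(0)$, which are (vacuously) $2$-transverse, so the factor is $1$, consistently with $\cal{L}_{\bf{1}}([a])=(0)$.

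Finally, since each factor $m(\pi[a],\pi'[a])$ is either $0$ or $1$ by the two preceding inputs, the product equals $1$ precisely when every factor is $1$, i.e.\ when $\cal{L}_\pi([a])$ and $\cal{L}_{\pi'}([a])$ are $2$-transverse for every $[a]$, and equals $0$ otherwise; this is exactly the claimed formula. The genuinely substantive content is already packaged inside Theorems \ref{a1} and \ref{a3}, so the remaining work is routine; the only step requiring real care is the general linear factor --- verifying that ``the same argument'' does produce the $2$-transverse criterion in that case as well, and matching that criterion against Tsai's combinatorial description in \cite{T}.
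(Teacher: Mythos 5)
Your proposal is correct and takes essentially the same route as the paper: the paper's proof is exactly the one-line reduction via Theorem \ref{a1} to the unipotent factors, which are then evaluated by Theorem \ref{a3} together with its general linear analogue (asserted just before the theorem, with the $\GGL_n\fq\supset\GGL_{n-1}\fq$ branching attributed to \cite{T}), and your bookkeeping about swapped sizes, symmetry of $2$-transversality, and the trivial factors merely makes explicit what the paper leaves implicit. The only cosmetic difference is that the paper also cites Theorem \ref{a2}, whose content is in any case subsumed by the $2$-transversality criterion of Theorem \ref{a3}, so omitting it does not create a gap.
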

\begin{proof}
It immediately follows from Theorem \ref{a1}, Theorem \ref{a2} and Theorem \ref{a3}.
\end{proof}

\begin{corollary}\label{d1}
Let $G=\UU_n$ and $\pi\in\cal{E}(G^F,s)$. Suppose that
\[
\cal{L}_s(\pi)=\prod_{[a]}\pi{[a]}
\]
 and $\cal{L}_\pi([a])=\lambda[a]$ for each $[a]$. Write $\lambda[a]^t=(\lambda[a]^t_1,\lambda[a]^t_2,\cdots)$.

 (i) Let $\pi'$ be an irreducible representation of $\UU_m\fq$ with $n\ge m$. Assume that
  the image of $\pi'$ under Lusztig correspondence is
\[
\prod_{[a']}\pi'[a'],
\]
and
 \[
\cal{L}_{\pi'}[a]=\left\{
  \begin{aligned}
 &{\lambda[a]^{-1}}&\textrm{ if $a'$ is a eignvalue of the semisimple element $s$ of $\pi$};\\
  &{(1,1,\cdots,1)}&\textrm{ otherwise}.
\end{aligned}
\right.
\]
 Then
\[
m(\pi,\pi')=1.
\]

(ii) The first occurrence index of $\pi$ is
$\ell_0=\sum_{[a]}\#[a]\cdot\lambda[a]^t_1$, and
\[
 \CD_{\ell_0}(\pi)=\pi''
\]
is a irreducible representation of $\UU_{n-\ell_0}\fq$ such that
$
\cal{L}_{\pi''}([a])={\lambda[a]^{-1}}\textrm{ for each }[a].
$\
In particular, for an irreducible representation $\pi$, the descent sequence (resp. the descent sequence index) of $\pi$ is unique. The the descent sequence index $\ell(\pi)=(\ell_1,\ell_2,\cdots)$ is a partition, and $\ell_i=\sum_{[a]}\#[a]\cdot\lambda[a]^t_i$.
\end{corollary}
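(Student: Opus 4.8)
The plan is to derive both parts of Corollary~\ref{d1} from the Gan--Gross--Prasad multiplicity formula of Theorem~\ref{ggp}, together with a single elementary combinatorial fact: \emph{among all partitions $\mu$ that are $2$-transverse with a fixed partition $\lambda$ (in the sense of Theorem~\ref{a3}), the unique one of minimal size is $\lambda^{-1}$, the partition obtained by deleting the first column, and $|\lambda^{-1}|=|\lambda|-\lambda^t_1$.} Indeed $2$-transversality forces $\mu_i\ge\lambda_i-1$ in every row, so $|\mu|=\sum_i\mu_i\ge\sum_i\max(\lambda_i-1,0)=|\lambda|-\lambda^t_1=|\lambda^{-1}|$; and if equality holds then $\mu_i=\lambda_i-1$ in each of the $\lambda^t_1$ nonzero rows of $\lambda$ and $\mu_i=0$ below them, i.e.\ $\mu=\lambda^{-1}$. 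I also note that $\lambda$ and $\lambda^{-1}$ are always $2$-transverse: consecutive parts differ by exactly $1$, so no part of $\lambda$ equals the corresponding part of $\lambda^{-1}$, while $|\lambda_i-\lambda^{-1}_i|\le 1$ is automatic. These observations, together with the shift identity $(\lambda^{-j})^t_i=\lambda^t_{i+j}$ recorded in Section~\ref{sec2.3}, are the only ingredients beyond Theorem~\ref{ggp}.

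\emph{Part (i).} Since $n\ge m$, Theorem~\ref{ggp} gives $m(\pi,\pi')=1$ as soon as $\cal{L}_\pi([a])$ and $\cal{L}_{\pi'}([a])$ are $2$-transverse for every $[a]$, and $m(\pi,\pi')=0$ otherwise. When $[a]$ is an eigenvalue of $s$ this pair is $(\lambda[a],\lambda[a]^{-1})$, which is $2$-transverse by the remark above; when $[a]$ is not an eigenvalue of $s$ we are comparing the empty partition with $(1,\dots,1)$, which is $2$-transverse by inspection. Hence $m(\pi,\pi')=1$.

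\emph{Part (ii).} By the definitions of Section~\ref{sec5.1}, $\CQ_\ell(\pi)=\rm{Wh}_{\co_{(\ell,1,\cdots,1)}}(\pi)$ is nonzero if and only if there is an irreducible representation $\pi'$ of $\UU_{n-\ell}\fq$ with $m(\pi,\pi')\ne 0$, and by Theorem~\ref{ggp} (applicable since $n\ge n-\ell$) this happens exactly when there is a tuple $(\mu[a])_{[a]}$ with $\mu[a]$ $2$-transverse with $\lambda[a]$ and $\sum_{[a]}\#[a]\cdot|\mu[a]|=n-\ell$. By the combinatorial fact, the smallest possible value of $\sum_{[a]}\#[a]\cdot|\mu[a]|$ is $\sum_{[a]}\#[a]\cdot(|\lambda[a]|-\lambda[a]^t_1)=n-\ell_0$ with $\ell_0:=\sum_{[a]}\#[a]\cdot\lambda[a]^t_1$; so $\CQ_\ell(\pi)=0$ for $\ell>\ell_0$ while $\CQ_{\ell_0}(\pi)\ne0$, which identifies $\ell_0$ as the first occurrence index. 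At $\ell=\ell_0$ the uniqueness clause forces $\mu[a]=\lambda[a]^{-1}$ for every $[a]$, so the only $\pi'$ with $m(\pi,\pi')\ne0$ is the representation $\pi''$ of $\UU_{n-\ell_0}\fq$ with $\cal{L}_{\pi''}([a])=\lambda[a]^{-1}$; and $m(\pi,\pi'')=1$ by part (i). As $\rm{Wh}_{\co_{(\ell_0,1,\cdots,1)}}(\pi)$ is a semisimple representation of $\UU_{n-\ell_0}\fq$ pairing with $\pi''$ to give $1$ and with every other irreducible to give $0$, it equals $\pi''$; that is, $\CD_{\ell_0}(\pi)=\pi''$ is irreducible. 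One then checks $\ell_0\le n$ (clear since $\lambda[a]^t_1\le|\lambda[a]|$), which makes the Bessel bound $k_0^{\rm B}<n/2$ and the parity relation $\ell_0=\max\{2k_0^{\rm FJ},2k_0^{\rm B}+1\}$ automatic, so $\ell_0$ genuinely is the descent index.

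\emph{Uniqueness and the partition property.} Iterating: because $\CD_{\ell_0}(\pi_i)$ is a single irreducible at each stage, every step of a descent sequence $\pi=\pi_1\to\pi_2\to\cdots$ is forced, so the sequence and its index are unique, and it terminates at $\bf{1}$ since $\ell_0(\pi_i)=\sum_{[a]}\#[a]\cdot\cal{L}_{\pi_i}([a])^t_1>0$ whenever $\pi_i\ne\bf{1}$. By part (ii), $\cal{L}_{\pi_i}([a])=\lambda[a]^{-(i-1)}$, hence $\ell_i=\ell_0(\pi_i)=\sum_{[a]}\#[a]\cdot(\lambda[a]^{-(i-1)})^t_1=\sum_{[a]}\#[a]\cdot\lambda[a]^t_i$ by the shift identity. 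Since each $\lambda[a]^t$ is a partition, $\lambda[a]^t_i\ge\lambda[a]^t_{i+1}$, so $\ell_i\ge\ell_{i+1}$ and $\ell(\pi)$ is a partition. The only genuinely delicate points I expect are the uniqueness clause in the combinatorial fact and reconciling this uniform computation with the Bessel/Fourier--Jacobi parity conventions of Section~\ref{sec5.1}; everything else is bookkeeping on top of Theorem~\ref{ggp}.
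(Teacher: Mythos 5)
Your proposal is correct and takes essentially the approach the paper intends: Corollary \ref{d1} is stated right after Theorem \ref{ggp} with no separate proof, being meant as an immediate consequence of that multiplicity formula, and your combinatorial lemma (the unique $2$-transverse partner of $\lambda$ of minimal size is $\lambda^{-1}$, of size $|\lambda|-\lambda^t_1$, and $\lambda^{-1}$ is indeed $2$-transverse with $\lambda$) together with the shift identity $(\lambda^{-j})^t_1=\lambda^t_{j+1}$ is exactly the bookkeeping needed, reproducing componentwise the content of Theorem \ref{a2} and yielding the irreducibility of the first descent, the uniqueness of the descent sequence, and the partition property. No further comparison is needed.
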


\section{Lemma in \cite{GRS} and exchanging roots Lemma}
\label{sec6}
In this section, we recall the result for cuspidal representations in \cite{GRS}. And we generalize their results from cuspidal representations of symplectic groups to the general case for unitary groups. In \cite{PW}, we consider the symplectic groups. However, we only deal with the Fourier-Jacobi case in \cite{PW}. This paper will first prove a similar exchanging roots Lemma for the Fourier-Jacobi case. Then we will get the Bessel case by the theta arguments in Section \ref{sec4}.

\subsection{Exchanging roots Lemma}\label{sec6.1}
Let $C\subset G$ be an $F$-stable subgroup of a maximal unipotent subgroup of $G$, and
let $\psi_C$ be a non-trivial character of $C^F$. Assume that there are two unipotent
$F$-stable subgroups $X$, $Y$ , such that the following conditions are satisfied.
 \begin{itemize}
 \item (1)   $X$ and $Y$ normalize $C$;
\item (2)  $X\cap C$ and $Y \cap C$ are normal in $X$ and $Y$ , respectively. The groups $X \cap C\verb|\|X$ and
$Y \cap C\verb|\|Y $ are abelian;
\item (3) $X^F$ and $Y^F$ preserve $\psi_C$ (when acting by conjugation);
\item (4)  $\psi_C$ is trivial on $(X \cap C)^F$ and on $(Y \cap C)^F$;
\item (5)   $[X, Y ] \subset C$;
\item (6) The pairing $(X \cap C)^F\verb|\|X^F \times(Y \cap C)^F\verb|\|Y^F \to \bb{C}^\times$, given by
\[
(x, y) \to \psi_C([x, y]),
\]
is multiplicative in each coordinate, non-degenerate, and identifies $(Y \cap C)^F\verb|\|Y^F$
with the dual of $(X \cap C)^F\verb|\|X^F$ and $(X \cap C)^F\verb|\|X^F$ with the dual of $(Y \cap C)^F\verb|\|Y^F$.
\end{itemize}
We represent the setup above by the following diagram,
\begin{equation}\label{er}
\begin{matrix}
&&A &&\\
&\nearrow&&\nwarrow&\\
B=CY&&&&D=CX\\
&\nwarrow&&\nearrow&\\
&&C&&
\end{matrix}.
\end{equation}
Here, $A = BX = DY = CXY $. Extend the character $\psi_C$ to a character $\psi_B$ of
$B^F$, and to a character $\psi_D$ of $D^F$, by making it trivial on $Y^F$ and on $X^F$.

\begin{lemma}[Exchanging roots Lemma (Lemma 7.1 in \cite{GRS2})]\label{erl}
Let $\pi$ be an irreducible representation of $G^F$. Then
\[
\langle \pi,\psi_{B}\rangle_{B^F}=\sum_{u\in B^F}\pi(u)\psi_B(u^{-1})=\sum_{y\in (Y \cap C)^F\verb|\|Y^F}\sum_{u\in D^F}\pi(uy)\psi_D(u^{-1})
\]
\end{lemma}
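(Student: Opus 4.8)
The plan is to follow the standard ``exchanging roots'' argument of Ginzburg--Rallis--Soudry, adapted to the finite field setting where all groups are finite and the ``integrals'' become finite sums over rational points. The starting point is the observation that, because $[X,Y]\subset C$ and both $X\cap C\backslash X$ and $Y\cap C\backslash Y$ are abelian (conditions (2), (5)), the group $A=CXY$ has $C$ as a normal subgroup with $A/C \cong (X/X\cap C)\times(Y/Y\cap C)$ carrying a Heisenberg-type commutator pairing, and the character $\psi_C$ of $C^F$ extends to $A^F$ only after a choice of polarization: extending via $X$ gives $\psi_D$ on $D^F=(CX)^F$, extending via $Y$ gives $\psi_B$ on $B^F=(CY)^F$. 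The key algebraic fact, coming from condition (6), is that the representation of $A^F$ induced from $(\psi_C$ on $C^F)$ decomposes, and the two extensions $\psi_B,\psi_D$ are related by the Weil/Heisenberg mechanism: $\mathrm{Ind}_{B^F}^{A^F}\psi_B \cong \mathrm{Ind}_{D^F}^{A^F}\psi_D$. Both sides are irreducible of dimension $|X\cap C\backslash X|^F$ (equivalently $|Y\cap C\backslash Y|^F$), because the nondegenerate pairing in (6) is exactly the statement that the central character $\psi_C$ is ``generic'' for the Heisenberg quotient.

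Granting that isomorphism, the proof becomes a bookkeeping computation with characters. First I would write $\langle \pi,\psi_B\rangle_{B^F} = \langle \pi, \mathrm{Ind}_{B^F}^{A^F}\psi_B\rangle_{A^F}$ by Frobenius reciprocity, then replace $\mathrm{Ind}_{B^F}^{A^F}\psi_B$ by $\mathrm{Ind}_{D^F}^{A^F}\psi_D$ using the Heisenberg isomorphism, and apply Frobenius reciprocity again to get $\langle \pi,\psi_B\rangle_{B^F} = \langle \pi,\psi_D\rangle_{D^F}$ in the sense of multiplicities. To obtain the precise sum formula in the statement, I would instead track the isomorphism at the level of the explicit models: an element of $\mathrm{Hom}_{B^F}(\pi,\psi_B)$ is a vector $v$ in $\pi^*$ with $\pi(b)v = \psi_B(b)v$; averaging $\frac{1}{|(Y\cap C)^F\backslash Y^F|}\sum_{y}\pi(y)^{-1}$ of such a vector (or rather the corresponding functional) against $\psi_D$-equivariance produces the $D^F$-model, and unwinding the averaging operator gives exactly
\[
\sum_{u\in B^F}\pi(u)\psi_B(u^{-1}) = \sum_{y\in(Y\cap C)^F\backslash Y^F}\sum_{u\in D^F}\pi(uy)\psi_D(u^{-1})
\]
as operators on the space of $\pi$ (both sides are, up to the same scalar, the projection onto the relevant isotypic line). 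Concretely one checks that $A^F = \coprod_{y}D^F y$ is a coset decomposition and that on each coset the function $u\mapsto \psi_B(u^{-1})$ restricted appropriately matches $\psi_D(u^{-1})$ after the commutator identity $\psi_C([x,y])$ is accounted for; the sum over $X^F$ inside $\sum_{u\in D^F}$ and the sum over $Y\cap C\backslash Y$ together rebuild the sum over $A^F$, which by the vanishing of the non-generic Heisenberg contributions collapses back to the sum over $B^F$.

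The main obstacle, and the step deserving the most care, is the verification that the ``extra'' terms cancel: when one expands $\sum_{y}\sum_{u\in D^F}\pi(uy)\psi_D(u^{-1})$ and compares with $\sum_{u\in B^F}\pi(u)\psi_B(u^{-1})$, the difference is a sum over the non-trivial characters of the abelian group $(X\cap C)^F\backslash X^F$ of the associated twisted sums, and one must invoke condition (6) — the nondegeneracy of the pairing — to see that summing over $(Y\cap C)^F\backslash Y^F$ annihilates every non-trivial character, leaving only the ``diagonal'' term. This is precisely the finite-group incarnation of the root-exchange cancellation in \cite{GRS2}; here it is cleaner because all sums are finite and the Fourier-inversion step on the finite abelian group $(X\cap C)^F\backslash X^F$ is elementary. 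I would also need to check the measure-theoretic normalizations are replaced by the correct group orders (in particular that $|(X\cap C)^F\backslash X^F| = |(Y\cap C)^F\backslash Y^F|$, which follows from (6)), and that conjugation by $X^F$ and $Y^F$ genuinely preserves $\psi_C$ so that $\psi_B,\psi_D$ are well-defined characters — these are exactly conditions (1)--(4), so no new input is required beyond careful reference to them.
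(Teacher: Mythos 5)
The paper never proves Lemma~\ref{erl}: it is imported verbatim from \cite{GRS2}, Lemma 7.1, and the root-exchange mechanism is only actually carried out later, in the proof of Lemma~\ref{ex1}, via the iterated filtration argument of \cite{PW} (exchanging the subgroups $X_r,Y_r$ one stage at a time). So there is no in-paper proof to compare with, and your representation-theoretic route is a legitimate and, in the finite setting, arguably cleaner substitute for the adelic argument: conditions (1), (2), (5) make $B=CY$ and $D=CX$ normal in $A=CXY$ with $A^F=B^FX^F=D^FY^F$; condition (6) forces $B^F\cap X^F=(X\cap C)^F$, $D^F\cap Y^F=(Y\cap C)^F$ and $[X^F:(X\cap C)^F]=[Y^F:(Y\cap C)^F]$, so $|B^F|=|D^F|$; and a one-double-coset Mackey computation plus nondegeneracy shows $\mathrm{Ind}_{B^F}^{A^F}\psi_B$ and $\mathrm{Ind}_{D^F}^{A^F}\psi_D$ are irreducible and isomorphic, whence $\langle\pi,\psi_B\rangle_{B^F}=\langle\pi,\psi_D\rangle_{D^F}$ by Frobenius reciprocity, exactly as you say.

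Two points need repair. First, the displayed identity is an identity of character sums (recall the paper's convention that $\pi$ also denotes its character; the first equality is moreover missing the factor $1/|B^F|$, a separate slip in the statement), not of operators: with $\pi(u)$ read as operators, the left-hand sum is $|B^F|$ times the projection onto the $\psi_B$-isotypic subspace of $\pi|_{B^F}$, while the right-hand sum equals $|D^F|\,P_{\psi_D}\sum_y\pi(y)$, whose image lies in the $\psi_D$-isotypic subspace; already for the Heisenberg group of order $q^3$ and its $q$-dimensional irreducible representation these are distinct operators with equal trace. So your parenthetical claim that the two sides agree ``as operators on the space of $\pi$'' is false and must be replaced by the trace statement. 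Second, the one-step Fourier inversion in your last paragraph is not enough as written: after expanding along $(X\cap C)^F\backslash X^F$, the inner sums still depend on $y$, so you cannot simply sum the characters $y\mapsto\psi_C([x,y])$ over $(Y\cap C)^F\backslash Y^F$ and invoke orthogonality; this dependence is precisely why \cite{GRS2} and the paper's Lemma~\ref{ex1} proceed by an iterated exchange. The clean way to finish inside your own framework is to observe that both sides are linear in the character of $\pi|_{A^F}$ and reduce to an irreducible constituent $\sigma$ of $\pi|_{A^F}$: if $\sigma\not\cong\mathrm{Ind}_{D^F}^{A^F}\psi_D$ both sides vanish, while if $\sigma\cong\mathrm{Ind}_{D^F}^{A^F}\psi_D$ the terms with $y\notin(Y\cap C)^F$ die because the induced character vanishes off the normal subgroup $D^F$, and the remaining term gives $|D^F|=|B^F|$ on both sides, using $\psi_C\equiv 1$ on $(Y\cap C)^F$ from condition (4). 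With those two adjustments your argument is complete and genuinely different from (and shorter than) the route the paper actually relies on.
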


We need to follow the lemma to compose a Fourier coefficient with another Fourier coefficient.
The proof of the following lemma is similar to that of Lemma 2.6 in \cite{GRS}, a special case of Exchanging Roots Lemma. But we would like to give direct proof here.
\begin{lemma}\label{ex1}
Let $G=\UU_n$ and $\pi$ be an irreducible representation of $G^F$.
Let $\cal{O}_\lambda=\co_{(\ell,d_2 ,\cdots ,d_s)}$ be an $F$-rational nilpotent orbit of $\fg^F$. Assume that $n$ and $\ell$ are both even. Then
\[
\rm{dim}\rm{Wh}_{\co_{\lambda}}(\pi)=\rm{dim}\rm{Wh}_{\co_{(d_2,\cdots,d_s)}}(\rm{Wh}_{\co_{(\ell,1 ,\cdots, 1)}}(\pi)).
\]

\end{lemma}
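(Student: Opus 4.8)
The plan is to realize both generalized Whittaker models as $\mathrm{Hom}$-spaces over explicit unipotent subgroups with explicit characters, and then exhibit a chain of groups fitting the Exchanging Roots Lemma (Lemma \ref{erl}) so that the two $\mathrm{Hom}$-spaces have the same dimension. First I would unwind the definitions from Section \ref{sec3.3}. For the $\mathfrak{sl}_2$-triple $\gamma$ attached to $\co_\lambda$ with $\lambda = (\ell, d_2, \dots, d_s)$ (both $n$ and $\ell$ even), the grading $\fg = \bigoplus_j \fg_j$ gives a group $G_{\ge 1.5}$ and a character $\psi$, and $\rm{Wh}_{\co_\lambda}(\pi) = \mathrm{Hom}_{G_{\ge 1.5}^F}(\pi, \psi)$ as a module over $M_X^F$; since $\ell$ is even this model is the Fourier-Jacobi type model of Section \ref{sec5.1}. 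I would then decompose $\lambda$ by peeling off the first part: the triple for $\co_{(\ell, 1, \dots, 1)}$ sits inside $\fg$, its stabilizer is $\UU_{n-\ell}$, and the triple for $\co_{(d_2, \dots, d_s)}$ lives inside $\fg(\UU_{n-\ell})$. Composing these two Whittaker models means taking $\mathrm{Hom}$ over the product of the two unipotent radicals with the product character. Because $\rm{dim}\,\rm{Wh}$ is what we track, it suffices to compare dimensions of $\mathrm{Hom}_{G^F}(\pi, -)$ against two induced representations built from these nested data.

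The core step is to set up the diagram \eqref{er}. I would take $C$ to be the ``common part'' of the two nilpotent data — the subgroup on which the characters agree and which is normalized by everything — and then identify the two unipotent groups $X$ and $Y$ that get exchanged: roughly, $X$ is a block of the unipotent radical $N_\ell$ associated with the part of $\fg_1$ that the Lagrangian choice for $\co_\lambda$ handles differently from the Lagrangian choice in the iterated construction, and $Y$ is its dual block, lying in the unipotent radical of the smaller parabolic governing $\co_{(d_2, \dots, d_s)}$ in $\UU_{n-\ell}$. Verifying conditions (1)--(6) is mostly a matrix computation with the explicit forms of $N_\ell$ from Section \ref{sec5.1}: $X$ and $Y$ normalize $C$, their intersections with $C$ are normal with abelian quotient, $\psi_C$ is preserved and trivial on those intersections, $[X,Y]\subset C$, and the commutator pairing $(x,y)\mapsto \psi_C([x,y])$ is a perfect pairing between $(X\cap C)^F\backslash X^F$ and $(Y\cap C)^F\backslash Y^F$. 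The parity assumptions on $n$ and $\ell$ are exactly what make the relevant symplectic form on $\fg_1$ split evenly so that the Lagrangian/Heisenberg bookkeeping matches up and the pairing in (6) is non-degenerate over $\Fq$.

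Once the diagram is in place, Lemma \ref{erl} gives $\langle \pi, \psi_B\rangle_{B^F} = \sum_{y} \sum_{u\in D^F} \pi(uy)\psi_D(u^{-1})$, where $B = CY$ computes (after a standard inflation/Frobenius-reciprocity bookkeeping) the dimension of $\rm{Wh}_{\co_{(d_2,\dots,d_s)}}(\rm{Wh}_{\co_{(\ell,1,\dots,1)}}(\pi))$ and $D = CX$ computes $\rm{dim}\,\rm{Wh}_{\co_\lambda}(\pi)$, with the outer sum over $(Y\cap C)^F\backslash Y^F$ accounting precisely for the difference between the Lagrangian used in $\co_\lambda$ and the one used in the iterated model. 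Some care is needed because the ``$1$'''s padding $\co_{(\ell,1,\dots,1)}$ contribute their own unipotent directions; I would handle this by choosing $X$, $Y$ adapted to the $\fg_{\ge 2}$ and $\fg_1$ pieces so that the padding directions are common to both sides and cancel. I expect the main obstacle to be the combinatorial matching: checking, in the explicit coordinates of $N_\ell$, that the subgroup chain $C \subset B, D \subset A = CXY$ really does interpolate between the two models — i.e. that $\psi_B$ restricted appropriately is the character defining the composed Whittaker model and $\psi_D$ the one defining $\rm{Wh}_{\co_\lambda}$ — and that the abelianity and non-degeneracy hypotheses (2) and (6) hold exactly because $\ell$ and $n$ are even. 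This is the step where an error in the Lagrangian choice would break the argument, so I would pin down the decomposition of $\fg_1$ and its symplectic form first, before touching the $\mathrm{Hom}$-spaces.
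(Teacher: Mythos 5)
Your proposal follows essentially the same route as the paper: reduce the equality to an identity of character sums over the unipotent groups attached to $\co_{(\ell,d_2,\dots,d_s)}$, $\co_{(\ell,1,\dots,1)}$ and $\co_{(d_2,\dots,d_s)}$, exhibit the groups $C$, $X$, $Y$ with $B=CY$, $D=CX$ realizing the two models, and apply the exchanging roots mechanism of Lemma \ref{erl}. The only point you gloss over, which the paper handles explicitly, is that conditions (5)--(6) do not hold for the full pair $(X,Y)$ at once, so the exchange must be carried out iteratively along the grading, via the filtration $X=\prod_r X_r$, $Y=\prod_r Y_r$ with $[X_r,Y_{r'}]\subset G_{>2}$ for $r'>r$, exactly as in the proof of Lemma 7.6 of \cite{PW}.
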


\begin{proof}

Let $G_{\ge k}$ (resp. $G_{\ge k}'$, $G_{\ge k}''$) be certain subgroups of unipotent subgroup of $\UU_n$ (resp. $\UU_n$, $\UU_{n-\ell}$) corresponding to $\co_{(\ell,d_2,d_3,\cdots,d_s)}$ (resp. $\co_{(\ell,1^{n-\ell})}$, $\co_{(d_2,d_3,\cdots,d_s)}$) with $k\in\{1,1.5,2\}$.
To prove this lemma, we only need to show
\begin{equation}\label{ex11}
\frac{1}{|G_{\ge 1.5}^F|}\sum_{u\in G_{\ge 1.5}^F}\pi(u)\psi(u^{-1})=\frac{1}{|G_{\ge 1.5}^{\prime F}||G_{\ge 1.5}^{\prime \prime F}|}\sum_{u_2\in G_{\ge 1.5}^{\prime\prime F}}\left(\sum_{u_1\in G_{\ge 1.5}^{\prime F}}\pi(u_1u_2)\psi(u_1^{-1})\right)\psi(u_2^{-1}).
\end{equation}

We prove (\ref{ex11}) by the root exchange technique in \cite[Chapter 7]{GRS2}. Since the exchanging roots process is precisely the same as the proof of Lemma 7.6 in \cite{PW}, we will explain how to pick some unipotent subgroups of $\UU_n$ which are similar to $X, Y, A, B, C, D$ as in Exchanging roots Lemma (Lemma \ref{erl}), and left the exchanging roots process to the reader.

From the structure of $\co_\lambda$, we can pick a representative element $\gamma=\{H,X,Y\}$ in the orbit of $\frak{sl}_2$-triple corresponding to $\co_\lambda$ such that
\[
\rm{exp}tH = \rm{diag}(t^{\ell-1},   \cdots ,   t^{-\ell+1})\in \UU_n\fq
\]
where $t\in \UU_1\fq$, and the powers of $t$ descend.
There is a Weyl group element $w$ which conjugates $\rm{exp}tH$ to the torus
\[
\begin{aligned}
h(t):&=w(\rm{exp}tH)w^{-1} = \rm{diag}(T,T_0,T'),
\end{aligned}
\]
where
\[
T=(t^{\ell-1}, t^{\ell-3}, \cdots ,t),\quad T_0=( t^{d_2-1}, \cdots , t^{-(d_2-1)})\textrm{ and } T'=(t^{-1},\cdots,t^{-(\ell-3)}, t^{-(\ell-1)})
\]
with the powers of $t$ descending.
Similarly, by the structure of $\co_{(d_2, \cdots ,d_s)}$, we can pick a representative element $\gamma''=\{H'',X'',Y''\}$ in the orbit of $\frak{sl}_2$-triple corresponding to $\co_{(d_2, \cdots ,d_s)}$ such that
\[
\rm{exp}tH'' = \rm{diag}(t^{d_2-1},  \cdots , t^{-(d_2-1)})\in \UU_{n-\ell}\fq,
\]
where the powers of $t$ goes down. Let $\fg_j = \{Z \in \fg | \rm{Ad}(h(t))(Z) = jZ\}$ and
\[
\fg = \bigoplus_{j\in\bb{Z}}\fg_{j}.
\]
Denote by
\[
\fg_{\ge i}:=\bigoplus_{j\ge i}\fg_{j}\textrm{ and } \fg_{\le i}:=\bigoplus_{j\le i}\fg_{j}.
\]
They are Lie algebras of a close connect subgroup $G_{\ge i}$ and $G_{\le i}$ of $G$, respectively.

To write down $G_{\ge i}^F$ explicitly, we set
\begin{itemize}
 \item $z_i$ is the number of $t^{j}$ in $\rm{exp}tH''$ such that $j> \ell-1-2i$;
\item $z'_j$ is the number of $t^{i}$ in $\rm{exp}tH''$ such that $i\ge \ell+3-2j$;
\item  $z^1_i$ is the number of $t^{j}$ in $\rm{exp}tH''$ such that $j= \ell-2i$.
\item $z^{\prime 1}_j$ is the number of $t^{i}$ in $\rm{exp}tH''$ such that $i= \ell+2-2j$.
\end{itemize}
Let $M_{n_1\times n_2}(k)$ be the additive group of $n_1\times n_2$ matrices with entries in the field $k$. For $m=(m_{i,j})\in M_{n_1\times n_2}(k)$, we set
$m^*=(m_{n_2+1-j,n_1+1-i}^{-q})\in M_{n_2\times n_1}$. Then
\[
G_{\ge1.5}^F=\left\{\begin{pmatrix}
u&h+e&g\\
&v&h^*+e^*\\
&&u^*
\end{pmatrix}
\begin{pmatrix}
I&&\\
k+f&I&\\
&k^*+f^*&I
\end{pmatrix}\right\}
\]
where
\begin{itemize}
 \item $h$ runs over $M_{\ge2}=\{(\alpha_{i,j})\in M_{\frac{\ell}{2}\times n-\ell}({\bb{F}}_{q^2})\ |\ \alpha_{i,j}=0\textrm{ if }j\le z_i\}$;
\item $u$ runs over the subgroup of unipotent upper triangular matrices of $\rm{Res}_{\bb{F}_{q^2}/\Fq}\GGL_\frac{\ell}{2}$;
\item  $g$ runs over $M_{\frac{\ell}{2}\times\frac{\ell}{2}}\fqq$;
\item $v$ runs over $G_{\ge1.5}^{\prime \prime F}$;
\item $k$ runs over $ M_{\ge2}'=\{(\alpha_{i,j})\in M_{ n-\ell\times \frac{\ell}{2}}\fqq\ |\ \alpha_{i,j}=0\textrm{ if }i> z'_{j}\}$;
\item $e $ runs over $ M_1=\{(\alpha_{i,j})\in M_{\frac{\ell}{2}\times n-\ell}\fqq\ |\ \alpha_{i,j}=0\textrm{ if }j\le \frac{z^1_i}{2}\textrm{ or }j>z_i\}$;
\item $f$ runs over $ M_1'=\{(\alpha_{i,j})\in M_{ n-\ell\times \frac{\ell}{2}}\fqq\ |\ \alpha_{i,j}=0\textrm{ if }i> (\frac{z^{\prime 1}_{j}}{2}+z^{\prime }_{j})\textrm{ or }i\le z^{\prime }_{j}\}$.
\end{itemize}
For $\co_{(\ell,1^{n-\ell})}$, we have
\[
G^{\prime F}_{\ge1.5}=\left\{\begin{pmatrix}
u&h'&g\\
&I&h^{\prime*}\\
&&u^*
\end{pmatrix}\right\}
=
\left\{\begin{pmatrix}
u&h+e&g\\
&I&h^*+e^*\\
&&u^*
\end{pmatrix}
\begin{pmatrix}
I&d&\\
&I&d^{*}\\
&&I
\end{pmatrix}
\right\}
\]
where
\begin{itemize}

\item $h'$ runs over $\{(\alpha_{i,j})\in M_{\frac{\ell}{2}\times n-\ell}\fqq\ |\ \alpha_{\frac{\ell}{2},j}=0\textrm{ if }j\le n-\ell\}$;

\item $d$ runs over $\{(\alpha_{i,j})\in M_{\frac{\ell}{2}\times n-\ell}\fqq\ |\ \alpha_{i,j}=0 \textrm{ if }j> \frac{z^1_i}{2}\}$.
 \end{itemize}

We set
\[
C:=\left\{\begin{pmatrix}
u&h+e&g\\
&v&h^*+e^*\\
&&u^*
\end{pmatrix}\right\},\quad X:=\left\{\begin{pmatrix}
I&d&\\
&I&d^{ *}\\
&&I
\end{pmatrix}
\right\}
\textrm{ and }
Y:=\left\{\begin{pmatrix}
I&&\\
k+f&I&\\
&k^*+f^*&I
\end{pmatrix}\right\},
\]
and let $\frak{c}$, $\frak{x}$ and $\frak{y}$ be their Lie algebra, respectively. Let
\[
B:=CY,\ D:=CX\textrm{ and }A:=CXY.
\]
Note that
 $(Y \cap C)\verb|\|Y=I\textrm{ and }(X \cap C)\verb|\|X=I$. Then
 \[
 \textrm{ the LHS of (\ref{ex11})}=\frac{1}{|B^F|}\sum_{u\in B^F}\pi(u)\psi(u^{-1})
 \]
 and
 \[
  \textrm{ the RHS of (\ref{ex11})}=\frac{1}{|D^F|}\sum_{u\in D^F}\pi(u)\psi(u^{-1}).
 \]
We would like to show that both side of (\ref{ex11}) is equal to
\[
\frac{1}{| C^F||X^F||Y^F|}\sum_{u\in C^F}\sum_{x\in X^F}\sum_{y\in Y^F}\pi(uyx)\psi(u^{-1}).
\]

 We set unipotent subgroups $X_r$ and $Y_r$ and their Lie algebra $\frak{x}_r$ and $\frak{y}_r$ as follow:
\[
X_r=X\cap (G_{\le2-r}\verb|\|G_{<2-r})\textrm { and }
Y_r=Y\cap (G_{\ge r}\verb|\|G_{> r}).
\]
In other words,
\[
\frak{x}_r=\fg_{2-r}\cap\frak{x} \textrm{ and }\frak{y}_r=\fg_{r}\cap\frak{y}.
\]
Note that $X$ and $Y$ are abelian, we have
\[
Y=\prod_{r=1}^{m}Y_r
\textrm{ and }
X=\prod_{r=1}^{m}X_r.
\]
We now turn to explicitly describing elements in $\frak{x}_r$ and $\frak{y}_r$.
Let
$\beta_j$ be the power of $t$ in the $j$-th term of $w(\rm{exp}tH)w^{-1}$, and $\delta(r,j)=\frac{\ell-\beta_j+r-1}{2}$. Let
\[
\cal{X}_r:=\{(i,j)\ |\ e_{ij}\in \mathfrak{x}_r\}\textrm{ and }\cal{Y}_r:=\{(i,j)\ |\ e_{ij}\in \frak{y}_r\}.
\]
To be more precise, we have
\[
\cal{X}_r=\{(i,j)\ |\ \ \frac{\ell}{2}< j\le n,\ i=\delta(r,j)\textrm{ and }i<\frac{\ell}{2}\}
\]
and
\[
\cal{Y}_r=\{(i,j)\ |\ \frac{\ell}{2}< i\le n,\ j=\delta(r,i)+1\textrm{ and }j\le \frac{\ell}{2}\}.
\]
Then $x\in X_r $ (resp. $y\in Y_r $) if and only if
\[
x_r=I+\sum_{(i,j)\in\cal{X}_r} \alpha_{i,j}e_{ij} \ \textrm{ (resp. } y_r=I+\sum_{(i,j)\in\cal{Y}_r} \alpha_{i,j}'e_{ij}\textrm{)}
\]
with $\alpha_{i,j}\in \Fqq$ (resp. $\alpha'_{i,j}\in \Fqq$).
By direct calculation, we have $[X_r,Y_r]\subset G_{> 2}\verb|\|G_{\ge 2}$, and $[X_r,Y_{r'}]\subset G_{> 2}$ for $r'>r$. So
$
\psi([x_r,y_r'])=1
$
and
\[
\psi([x_r,y_r])=\psi(\sum_{j=n_1+1}^{n}\alpha_{\delta(r,j),j}\cdot\alpha'_{j,\delta(r,j)+1}).
\]
Therefore the pairing $I\verb|\|X_r^F \times I\verb|\|Y_r^F \to \bb{C}^\times$, given by
\[
(x_r, y_r) \to \psi([x_r, y_r]),
\]
is multiplicative in each coordinate, non-degenerate. We identify $I\verb|\|Y_r^F$
with the dual of $I\verb|\|X_r^F$, and $I\verb|\|X_r^F$ with the dual of $I\verb|\|Y_r^F$. In other words, the pair of groups $(X_r, Y_r)$ satisfies the condition (5)-(6) in the Exchanging Roots Lemma.

We set
\[
\begin{aligned}
\phi_{m+1}(g):=\frac{1}{|D^F|}\sum_{u\in D^F}\pi(ug)\psi(u^{-1}).
\end{aligned}
\]
We inductively define
\[
\phi_{r}(g):=\sum_{y_{r}\in Y_{r}^F}\phi_{r+1}(y_{r}g).
\]
In particular,
\[
\frac{1}{|Y^F|}\phi_{1}(1)=\frac{1}{|C^F||X^F||Y^F|}\sum_{u\in C^F}\sum_{x\in X^F}\sum_{y\in Y^F}\pi(uxy)\psi(u^{-1}).
\]
With similar exchanging roots argument in the proof of Lemma 7.6 in \cite{PW}, we know that
\[
\frac{1}{|Y_{r}^F|}\phi_{r+1}(1)=\phi_{r}(1).
\]
Therefore, we have
\begin{equation}\label{ex111}
\begin{aligned}
\textrm{ the RHS of (\ref{ex11})}=\ &\frac{1}{|D^F|}\sum_{u\in D^F}\pi(u)\psi(u^{-1})\\
 =\ &\phi_{m+1}(1)\\
 =\ &\frac{1}{|Y^F|}\phi_{1}(1)\\
 =\ &\frac{1}{|C^F||X^F||Y^F|}\sum_{u\in C^F}\sum_{x\in X^F}\sum_{y\in Y^F}\pi(uxy)\psi(u^{-1}).
 \end{aligned}
\end{equation}

Similarly, for the LHS of (\ref{ex11}), we set
\[
\begin{aligned}
\phi'_{1}(g):=\frac{1}{|B^F|}\sum_{u\in B^F}\pi(ug)\psi(u^{-1}).
\end{aligned}
\]
We inductively define
\[
\phi'_{r}(g):=\sum_{x_{r}\in X_{r}^F}\phi_{r-1}(x_{r}g).
\]
With the same exchanging roots argument, we have
\begin{equation}\label{ex112}
\textrm{ the LHS of (\ref{ex11})}=\frac{1}{|C^F||X^F||Y^F|}\sum_{u\in C^F}\sum_{x\in X^F}\sum_{y\in Y^F}\pi(uxy)\psi(u^{-1}).
\end{equation}
Then (\ref{ex11}) follows immediately from (\ref{ex111}) and (\ref{ex112}).
\end{proof}

\begin{lemma}\label{ex2}
Let $G=\UU_n$ and $\pi$ be an irreducible representation of $G^F$.
Let $\cal{O}_\lambda=\co_{(\ell,d_2 ,\cdots ,d_s)}$ be an $F$-rational nilpotent orbit of $\fg^F$. Assume that $\ell$ is odd. Then
\[
\rm{dim}\rm{Wh}_{\co_{\lambda}}(\pi)=\rm{dim}\rm{Wh}_{\co_{(\ell,1 ,\cdots, 1)}}(\rm{Wh}_{\co_{(d_2,\cdots,d_s)}}(\pi)).
\]

\end{lemma}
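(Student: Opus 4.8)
The plan is to deduce this Bessel-type identity (where $\ell$ is odd) from the Fourier-Jacobi identity of Lemma~\ref{ex1} (which requires an even leading part) by transporting both sides through the finite theta correspondence, using the compatibility of generalized Whittaker models with theta lifting recorded in Theorem~\ref{gz} and Proposition~\ref{key}. The mechanism is that the descent map $\bigtriangledown$ on $F$-rational nilpotent orbits raises every part of the attached partition by one, so the odd part $\ell$ is the descent of the \emph{even} part $\ell+1$. Thus, after replacing $\pi$ by a theta lift $\Theta_{n,m}(\pi)$ on a larger unitary group $\UU_m\fq$ and replacing each orbit built from $\lambda=(\ell,d_2,\cdots,d_s)$ by the corresponding orbit built from $\lambda^+=(\ell+1,d_2+1,\cdots,d_s+1)$ (padded by $1$'s to size $m$), the identity to be proved becomes one to which Lemma~\ref{ex1} applies, because the leading part $\ell+1$ is now even.

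Concretely I would argue in three steps, after fixing $m>n$ with $m$ even, with $m-n$ equal to the number of parts of $\lambda$, and large enough that $\Theta_{n,m}(\pi)\ne 0$. With this choice the intertwining operator realizing the descent is injective, so the auxiliary reductive factors $L$ and $L'$ of Theorem~\ref{gz}(i) are both trivial, and no twist by $\Theta(\tau')$ and no passage to invariants intervenes. Decomposing $\Theta_{n,m}(\pi)$ into irreducible constituents and using that $\rm{dim}\,\rm{Wh}$ is additive over direct sums: (a) Theorem~\ref{gz}(i) with trivial $\tau'$ gives $\rm{dim}\,\rm{Wh}_{\co_\lambda}(\pi^\vee)=\rm{dim}\,\rm{Wh}_{\co_{\lambda^+}}(\Theta_{n,m}(\pi))$; (b) Lemma~\ref{ex1}, now legitimate inside $\UU_m\fq$ because $m$ and $\ell+1$ are even, rewrites the right-hand side as the dimension of the iterated generalized Whittaker model of $\Theta_{n,m}(\pi)$ over the orbits $\co_{(\ell+1,1,\cdots,1)}$ and $\co_{(d_2+1,\cdots,d_s+1)}$; (c) Proposition~\ref{key}, applied with $\co=\co_{(\ell,1,\cdots,1)}$ and $\co^*=\co_{(d_2,\cdots,d_s)}$ whose inverse descents to the larger groups are precisely $\co_{(\ell+1,1,\cdots,1)}$ and $\co_{(d_2+1,\cdots,d_s+1)}$, pulls this iterated model back down through the theta lift and identifies it with the iterated generalized Whittaker model of $\pi$ appearing on the right-hand side of the lemma. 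Chaining (a), (b) and (c), then replacing $\pi^\vee$ by $\pi$ (which leaves $\rm{dim}\,\rm{Wh}$ unchanged), yields the claim.

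The delicate point is step (a) together with the choice of $m$: requiring $L'$ to be trivial pins $m$ down to $n$ plus the number of parts of $\lambda$, and one must check that with this $m$ the lift $\Theta_{n,m}(\pi)$ is nonzero — which should follow from persistence of theta lifts, the lemma being trivially true when $\lambda$ has a single part — and that $m$ can be taken even; if $n$ plus the number of parts of $\lambda$ has the wrong parity, one inserts a single extra padding $1$, accepts $L'\cong\UU_1$, and separately verifies that $(L')^F$ acts trivially on $\rm{Wh}_{\co_{\lambda^+}}(\Theta_{n,m}(\pi))$, so that taking $(L')^F$-invariants in Theorem~\ref{gz}(i) does no harm. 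Apart from this — and the book-keeping of the combinatorics of $\bigtriangledown$, including the convention of padding $\co_{(d_2,\cdots,d_s)}$ by $1$'s to a nilpotent orbit of $\UU_n$ so that it can be applied to $\pi$ — every ingredient (the explicit form of $\bigtriangledown$, Theorem~\ref{gz}, Proposition~\ref{key}, and Lemma~\ref{ex1}) is already in place, so the proof is essentially the matter of assembling these pieces with the orbits and groups correctly matched.
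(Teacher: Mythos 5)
Your proposal is essentially the paper's own argument: lift $\pi$ by theta to a larger unitary group of even rank, where the orbit acquires the even leading part $\ell+1$, apply Lemma \ref{ex1} there, and transfer back through Theorem \ref{gz} and Proposition \ref{key}; the paper does exactly this, simply taking $m=2n$ and $\co'=\co_{(\ell+1,d_2+1,\cdots,d_s+1,1,\cdots,1)}$. The only real divergence is your insistence on the minimal $m=n+s$ to make $L'$ trivial: there the appeal to ``persistence'' does not guarantee $\Theta_{n,m}(\pi)\ne 0$ (the first occurrence can lie beyond $n+s$), but this is harmless because Theorem \ref{gz}, Proposition \ref{key} and Lemma \ref{ex1} are invoked unconditionally, so both sides of the chain vanish together when the lift is zero; choosing $m=2n$ (even, and large enough for the padded partition) as in the paper dissolves all of your ``delicate points'' at the cost of a nontrivial $L'$-factor, which the paper — like your step (a) — treats only implicitly.
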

\begin{proof}
Let $m=2n$ and $G'=\UU_m$. Let  $\cal{O}'=\co_{(\ell+1,d_2+1 ,\cdots ,d_s+1,1,\cdots,1)}$ be an $F$-rational nilpotent orbit of $\fg^{\prime F}$. Consider the dual pair $(G',G)=(\UU_m,\UU_n)$. Then
\[
 \co_{\lambda}=\bigtriangledown_{m,n}(\co').
\]
By Proposition \ref{key}, we have
\[
\rm{Wh}_{\co^{**}}(\rm{Wh}_{\co^{*}}(\Theta_{n,m}(\pi^\vee)))\cong \rm{Wh}_{\co_{(d_2,\cdots,d_s)}}(\rm{Wh}_{\co_{(\ell,1 ,\cdots, 1)}}(\pi)),
\]
where
\[
\co^*=\bigtriangledown_{m,n}(\co_{(\ell,1 ,\cdots, 1)})=\co_{(\ell+1,1,\cdots,1)}
\]
and
\[
\co^{**}=\bigtriangledown_{{m-\ell-1},{n-\ell}}(\co_{(d_2,\cdots,d_s)})=\co_{(d_2+1,\cdots,d_s+1,1,\cdots,1)}.
\]
Note that $m$ and $\ell+1$ are both even.
By Lemma \ref{ex1}, we have
\[
\rm{dim}\rm{Wh}_{\co^{**}}(\rm{Wh}_{\co^{*}}(\Theta_{n,m}(\pi^\vee)))=\rm{dim}\rm{Wh}_{\co'}(\Theta_{n,m}(\pi^\vee)).
\]
It follows from Theorem \ref{gz} that
\[
\rm{Wh}_{\co'}(\Theta_{n,m}(\pi^\vee))\cong \rm{Wh}_{\co}(\pi).
\]
So
\[
\begin{aligned}
\rm{dim}\rm{Wh}_{\co_{\lambda}}(\pi)=&\rm{dim}\rm{Wh}_{\co'}(\Theta_{n,m}(\pi^\vee))\\
=&\rm{dim}\rm{Wh}_{\co^{**}}(\rm{Wh}_{\co^{*}}(\Theta_{n,m}(\pi^\vee)))\\
=&\rm{dim}\rm{Wh}_{\co^{**}}(\rm{Wh}_{\co^{*}}(\Theta_{n,m}(\pi^\vee))).
\end{aligned}
\]
\end{proof}

\begin{lemma}\label{ex3}
Assume that $G=\UU_n$, and $\pi$ is an irreducible representation of $G^F$.
Let $\cal{O}_\lambda=\co_{(\ell,d_2 ,\cdots ,d_s)}$ be an $F$-rational nilpotent orbit of $\fg^F$. Then
\[
\rm{dim}\rm{Wh}_{\co_{\lambda}}(\pi)=\rm{dim}\rm{Wh}_{\co_{(d_2,\cdots,d_s)}}(\rm{Wh}_{\co_{(\ell,1 ,\cdots, 1)}}(\pi)).
\]

\end{lemma}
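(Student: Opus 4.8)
The plan is to obtain Lemma~\ref{ex3} from Lemmas~\ref{ex1} and~\ref{ex2} by a case analysis on the parities of $\ell$ and $n$. When $\ell$ is odd the assertion is exactly Lemma~\ref{ex2}, and when $\ell$ and $n$ are both even it is Lemma~\ref{ex1}; so the only remaining case is $\ell$ even and $n$ odd, which I would treat by repeating the theta-lifting reduction from the proof of Lemma~\ref{ex2}, this time starting from an orbit with \emph{odd} leading part.

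Concretely, write $\lambda=(\ell,d_2,\ldots,d_s)\vdash n$, set $m=2n$ (so $m$ is even and $m>n$), and view $(\UU_n,\UU_m)$ as a dual pair. Put $\mu=(\ell+1,d_2+1,\ldots,d_s+1,1^{\,n-s})$; since $\lambda$ has $s\le n$ parts this is a partition of $m$, and the combinatorial description of the descent map gives $\bigtriangledown_{m,n}(\co_\mu)=\co_\lambda$. Because this descent is realized by an injective map, the auxiliary factor $L$ on the $\UU_n$-side is trivial, and Theorem~\ref{gz}(i) then yields $\rm{Wh}_{\co_\mu}(\Theta_{n,m}(\pi^\vee))\cong\rm{Wh}_{\co_\lambda}(\pi)$, exactly as in the proof of Lemma~\ref{ex2}. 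Since the leading part $\ell+1$ of $\mu$ is odd, Lemma~\ref{ex2} applies in $\UU_m$ to $\co_\mu$; applying it to each irreducible constituent of $\Theta_{n,m}(\pi^\vee)$ and using additivity of $\dim\rm{Wh}$, it peels off the single-jump orbit $\co_{(\ell+1,1^{\,m-\ell-1})}$ and leaves the Whittaker model of the orbit $\co^{\flat}:=\co_{(d_2+1,\ldots,d_s+1,1^{\,n-s})}$ of $\UU_{m-\ell-1}$. Finally I would apply Proposition~\ref{key} with $\co=\co_{(\ell,1^{\,n-\ell})}$ in $\UU_n$ and $\co^{*}=\co_{(d_2,\ldots,d_s)}$ in $\UU_{n-\ell}$: one checks $\bigtriangledown_{m,n}(\co)=\co_{(\ell+1,1^{\,m-\ell-1})}$ and $\bigtriangledown_{m-\ell-1,n-\ell}(\co^{*})=\co^{\flat}$, so Proposition~\ref{key}, applied to $\pi^\vee$, converts the iterated Whittaker model on the $\UU_m$-side into $\rm{Wh}_{\co^{*}}(\rm{Wh}_{\co}(\pi))$ on the $\UU_n$-side. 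Chaining the three resulting dimension identities gives $\dim\rm{Wh}_{\co_\lambda}(\pi)=\dim\rm{Wh}_{\co_{(d_2,\ldots,d_s)}}(\rm{Wh}_{\co_{(\ell,1,\ldots,1)}}(\pi))$.

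The step I expect to demand the most care is matching the three occurrences of the descent map --- checking that the orbit $\co^{\flat}$ produced by Lemma~\ref{ex2} inside $\UU_m$ is literally the orbit $\bigtriangledown_{m-\ell-1,n-\ell}(\co^{*})$ entering Proposition~\ref{key}; this is a routine but slightly delicate partition computation, entirely parallel to the one in the proof of Lemma~\ref{ex2}. I would also record explicitly that the argument is not circular: the appeal to Lemma~\ref{ex2} in $\UU_m=\UU_{2n}$ feeds, through that lemma's own proof, only into Lemma~\ref{ex1} applied on $\UU_{4n}$ to an orbit whose leading part $\ell+2$ is even, so no instance of Lemma~\ref{ex3} is ever used.
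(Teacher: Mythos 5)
Your proposal is correct and takes essentially the same route as the paper: the paper also reduces to the three parity cases, citing Lemma \ref{ex2} for $\ell$ odd and Lemma \ref{ex1} for $\ell,n$ both even, and settles the remaining case ($\ell$ even, $n$ odd) by exactly the theta-lifting argument of Lemma \ref{ex2}'s proof, now feeding the odd-leading-part case on $\UU_{2n}$ back through Theorem \ref{gz} and Proposition \ref{key}. Your write-up merely makes explicit the partition bookkeeping and the non-circularity check that the paper compresses into ``follows from the odd $\ell$ case and a similar argument.''
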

\begin{proof}
We have proved this lemma for odd $\ell$ in Lemma \ref{ex2} and for both $n$ and $\ell$ even in Lemma \ref{ex1}.
We only need to prove the case where $\ell$ is even, and $n$ is odd. This case follows from the odd $\ell$ case and similar argument in the proof of Lemma \ref{ex2}.
\end{proof}
\section{Proof of main results}\label{sec7}

In this section, we prove our main theorems. Theorem \ref{am} contains a explicit description of the wavefront set and a multiplicity one result. We mention that the multiplicity one result follows from that fact: the first descent of an irreducible representation is also an irreducible representation.

\begin{theorem}\label{am} Assume that $G=\UU_n$ is a unitary group over $\Fq$, and  $\pi\in \rm{Irr}(G^F)$. Let $\ell(\pi)=(\ell_1,\ell_2,\cdots)$ be the descent index sequence of $\pi$.

(i) We have
\[
\rm{dim}\rm{Wh}_{\co_{\ell(\pi)}}(\pi)=1.
\]
(ii)
The nilpotent orbit $\co_{\ell(\pi)}$ is the wavefront set of $\pi$.
\end{theorem}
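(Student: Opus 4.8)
The plan is to combine the two reductions proved in Section \ref{sec6} with the explicit descent computation in Corollary \ref{d1}. First I would set up the induction on $n$ (or equivalently on the number of parts of $\ell(\pi)$): let $\pi = \pi_1 \xrightarrow{\ell_1} \pi_2 \xrightarrow{\ell_2} \cdots$ be the (unique, by Corollary \ref{d1}(ii)) descent sequence of $\pi$, so that $\pi_2$ is an irreducible representation of $\UU_{n-\ell_1}\fq$ appearing in $\CD_{\ell_1}(\pi) = \rm{Wh}_{\co_{(\ell_1,1,\cdots,1)}}(\pi)$, and by Corollary \ref{d1}(ii) its descent index is $(\ell_2,\ell_3,\cdots)$, which is again a partition. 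Iterating Lemma \ref{ex3} along this sequence gives
\[
\rm{dim}\rm{Wh}_{\co_{\ell(\pi)}}(\pi) = \rm{dim}\rm{Wh}_{\co_{(\ell_k)}}\left(\cdots \rm{Wh}_{\co_{(\ell_2,1,\cdots,1)}}\left(\rm{Wh}_{\co_{(\ell_1,1,\cdots,1)}}(\pi)\right)\cdots\right).
\]
Here I need to check that at each stage $\ell_i$ really is the first occurrence index of $\pi_i$, so that Corollary \ref{d1}(ii) applies and the inner Whittaker model $\rm{Wh}_{\co_{(\ell_i,1,\cdots,1)}}(\pi_i)$ is \emph{irreducible} (equal to the single representation $\pi_{i+1}$); this is precisely the point emphasized in the introduction that the first descent is always irreducible for unitary groups. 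Since $\rm{Wh}_{\co_{(\ell_1,1,\cdots,1)}}(\pi) = \pi_2$ is irreducible, the outer $\rm{Wh}_{\co_{(\ell_2,1,\cdots,1)}}$ is applied to an irreducible representation, and we may iterate; the final term is $\rm{Wh}_{\co_{(\ell_k)}}(\pi_k)$ where $\pi_k$ is a representation of $\UU_{\ell_k}\fq$ whose descent index is $(\ell_k)$, hence $\rm{Wh}_{\co_{(\ell_k)}}(\pi_k) = \CD_{\ell_k}(\pi_k) = \bf{1}$, the trivial representation of the trivial group, which has dimension $1$. This proves part (i).

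For the identification $\ell_i = \sum_{[a]} \#[a] \cdot \cal{L}_\pi([a])^t_i$ asserted in Theorem \ref{main1}, I would invoke Corollary \ref{d1}(ii) directly: it states $\ell_0(\pi) = \sum_{[a]}\#[a]\cdot\lambda[a]^t_1$ and that $\CD_{\ell_0}(\pi) = \pi''$ has Lusztig data $\cal{L}_{\pi''}([a]) = \lambda[a]^{-1}$ for each $[a]$; since $(\lambda[a]^{-1})^t_i = \lambda[a]^t_{i+1}$, induction immediately gives $\ell_i = \sum_{[a]}\#[a]\cdot\lambda[a]^t_i$ and that $\ell(\pi)$ is a partition.

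For part (ii), that $\widetilde{\co}_{\ell(\pi)}$ is the wavefront set of $\pi$: by part (i) condition (WF1) holds with $\wco = \widetilde{\co}_{\ell(\pi)}$. For (WF2), I would argue that $\dim \widetilde{\co}_{\ell(\pi)}$ is maximal among all $F$-stable nilpotent orbits $\widetilde{\co}_{\lambda}$ with $\rm{Wh}_{\co_\lambda}(\pi)\ne 0$. The natural route is to translate the GGGR multiplicity into the iterated-Whittaker language via Lemma \ref{ex3}: for any $\lambda = (\lambda_1,\cdots,\lambda_k)$, $\rm{dim}\rm{Wh}_{\co_\lambda}(\pi) = \rm{dim}\rm{Wh}_{\co_{(\lambda_k)}}(\cdots\rm{Wh}_{\co_{(\lambda_1,1,\cdots,1)}}(\pi)\cdots)$, and the inner $\rm{Wh}_{\co_{(\lambda_1,1,\cdots,1)}}(\pi) = \CQ_{\lambda_1}(\pi)$ vanishes unless $\lambda_1 \le \ell_0(\pi) = \ell_1$ by the definition of the first occurrence index; descending into each successive quotient and using Corollary \ref{d1}(ii) to track Lusztig data, each constraint $\lambda_i \le \ell_i$ follows inductively, so $\rm{Wh}_{\co_\lambda}(\pi)\ne 0$ forces $\lambda \le \ell(\pi)$ in the parabolic ordering $\sum_{i\le j}\lambda_i \le \sum_{i\le j}\ell_i$. (In fact one should be careful that after a non-maximal descent the quotient is no longer irreducible; here one takes Lusztig data of each irreducible constituent and applies Corollary \ref{d1} to bound the next occurrence index of every constituent, taking the maximum.) Finally, among partitions $\lambda$ of $n$ with $\lambda \le \ell(\pi)$ in the dominance order, $\ell(\pi)$ itself maximizes the dimension of the associated $F$-stable nilpotent orbit, because dominance order on partitions coincides with closure order on nilpotent orbits (for type $A$, and hence for unitary groups where $F$-stable orbits are parametrized by partitions of $n$) and larger orbits have larger dimension; this gives (WF2), and indeed the geometric refinement (WF$2'$) as $\widetilde{\co}_\lambda \subset \widetilde{\co}_{\ell(\pi)}$.

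\textbf{Main obstacle.} The hard part is the vanishing direction in (ii): controlling $\rm{Wh}_{\co_\lambda}(\pi)$ for $\lambda$ \emph{not} of the hook-plus-parts form requires applying Lemma \ref{ex3} repeatedly through quotients that need not be irreducible, so one must carry along the full list of Lusztig parameters of all irreducible constituents at each stage and show none of them can support a descent of index exceeding the relevant $\ell_i$. This bookkeeping — essentially a careful induction using Corollary \ref{d1}(ii) applied constituent-by-constituent, together with the combinatorial fact that removing a column from each $\lambda[a]$ decreases the transpose-partial-sums in exactly the controlled way — is where the real work lies; the irreducibility in the maximal case (part (i)) is comparatively clean once Lemma \ref{ex3} and Corollary \ref{d1} are in hand.
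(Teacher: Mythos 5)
Your part (i) is essentially the paper's argument: induct on $n$, use Corollary \ref{d1} to see that the first descent $\rm{Wh}_{\co_{(\ell_1,1,\cdots,1)}}(\pi)$ is the single irreducible $\pi'$ whose descent index is $(\ell_2,\ell_3,\cdots)$, and conclude with Lemma \ref{ex3}; that part is correct and matches the paper.

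Part (ii) is where you depart from the paper, and there is a genuine gap. You propose to prove the maximality condition from scratch by showing that $\rm{Wh}_{\co_\lambda}(\pi)\ne 0$ forces $\lambda\le\ell(\pi)$, tracking Lusztig data of all constituents through iterated descents. Two problems. First, the intermediate claim you rely on, that ``each constraint $\lambda_i\le\ell_i$ follows inductively,'' is false termwise: take $\pi$ unipotent on $\UU_5\fq$ with $\cal{L}_\pi([1])=(3,1,1)$, so $\ell(\pi)=(3,1,1)$; then $\lambda=(2,2,1)\le\ell(\pi)$ in the dominance order, and by the paper's Theorem \ref{main2} one has $\rm{Wh}_{\co_\lambda}(\pi)\ne 0$ even though $\lambda_2>\ell_2$. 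Indeed, the proof of Theorem \ref{main2} exhibits a constituent $\pi_b$ of $\CQ_{\lambda_1}(\pi)$ whose first occurrence index is $\ell_2+\ell_1-\lambda_1>\ell_2$, so after a non-maximal descent the constituents' occurrence indices genuinely exceed $\ell_2$; only the cumulative (partial-sum) inequality can survive, and ``bounding the next occurrence index of every constituent'' by $\ell_{i+1}$, as you suggest, cannot work. Second, even for the correct dominance statement you explicitly leave the proof as ``where the real work lies,'' so the key step of (ii) is not actually carried out. The paper avoids this vanishing problem entirely: it takes as external input the known existence of the wavefront set together with the geometric refinement (WF$2'$) of \cite[Section 1.5]{T} (closure order equals dominance order for unitary groups), lets $\co_\lambda$ be that wavefront set, deduces $\ell(\pi)\le\lambda$ from part (i) via (WF$2'$), rules out $\lambda_1>\ell_1$ by Lemma \ref{ex3} (it would force $\rm{Wh}_{\co_{(\lambda_1,1,\cdots,1)}}(\pi)\ne 0$, contradicting the definition of $\ell_1$), concludes $\lambda_1=\ell_1$ so that the first descent is the irreducible $\pi'$, and finishes by induction on $n$. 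Either carry out the dominance bound in full (with partial sums, not termwise inequalities), or invoke the existence of the wavefront set and (WF$2'$) as the paper does.
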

\begin{proof}
We prove the theorem by induction on $n$.

(i) By Corollary \ref{d1}, we know that
\begin{itemize}

\item
\[
\rm{Wh}_{\co_{(\ell_1,1,\cdots,1)}}(\pi)\cong \CD_{\ell_1}(\pi)=\pi',
\]
where $\pi'$ is an irreducible representation of $\UU_{n-\ell_1}\fq$ defined in Corollary \ref{d1};
\item
the descent index sequence $\ell(\pi')$ of $\pi'$ is equal to $(\ell_2,\ell_3,\cdots)$.

\end{itemize}
By our induction assumption on $n$, we have
\[
\rm{dim}\rm{Wh}_{\co_{\ell(\pi')}}(\pi')=1.
\]
According to Lemma \ref{ex3}, we have
\[
\rm{dim}\rm{Wh}_{\co_{\ell(\pi)}}(\pi)=\rm{dim}\rm{Wh}_{\co_{(\ell_2,\ell_3,\cdots)}}(\rm{Wh}_{\co_{(\ell_1,1 ,\cdots, 1)}}(\pi))=\rm{dim}\rm{Wh}_{\co_{\ell(\pi')}}(\pi')=1.
\]

(ii) Assume that $\co_\lambda=\co_{(\lambda_1,\lambda_2,\cdots)}$ be the wavefront set of $\pi$. In \cite[Section 1.5]{T}, there is a
geometric refinement of the condition (WF2) in the definition of wavefront sets as follows:
 \begin{itemize}
\item[] (WF$2'$) $\langle \pi,\Gamma_{\gamma'}\rangle\ne 0$ for some $\gamma'=\{X',H',Y'\}$ with $X'\in \wco_{\lambda'}^{F}$ implies $\widetilde{\co}_{\lambda'}\subset\widetilde{\co}_\lambda$.
\end{itemize}
In the unitary groups case, $\widetilde{\co}_{\lambda'}\subset\widetilde{\co}_\lambda$ is equivalent to $\lambda'\le\lambda$, where  $\lambda'\le \lambda $ is the parabolic ordering, for each $k$,
 \[
\sum_{i=1}^k \lambda'_i\le \sum_{i=1}^k\lambda_i.
 \]
Recall that we have already proved that $\rm{dim}\rm{Wh}_{\co_{\ell(\pi')}}(\pi')=1$. So $\lambda_1\ge \ell_1$. If $\lambda_1>\ell_1$, then by Lemma \ref{ex3},
\[
\rm{dim}\rm{Wh}_{\co_{(\lambda_2,\lambda_3,\cdots)}}(\rm{Wh}_{\co_{(\lambda_1,1 ,\cdots, 1)}}(\pi))=\rm{dim}\rm{Wh}_{\co_{\lambda}}(\pi)\ne 0,
\]
which implies that $\rm{Wh}_{\co_{(\lambda_1,1 ,\cdots, 1)}}(\pi)\ne 0$. This contradicts the definition of $\ell(\pi)$. So $\lambda_1=\ell_1$ and
 \[
\rm{Wh}_{\co_{(\lambda_1,1 ,\cdots, 1)}}(\pi)=\rm{Wh}_{\co_{(\ell_1,1,\cdots,1)}}(\pi)=\pi'
\]
 is an irreducible representation of $\UU_{n-\ell_1}\fq$. It immediately follows from Lemma \ref{ex3} that $(\lambda_2,\lambda_3,\cdots)$ is the wavefront set of $\pi$. Then by our induction assumption on $n$, we have $(\lambda_2,\lambda_3,\cdots)=(\ell_2,\ell_3,\cdots)$, which completes the proof.

\end{proof}

If the nilpotent orbit $\co$ is not the wavefront of $\pi$,  in principle, we can still calculate the dimension of $\rm{Wh}_{\co}(\pi)$ in a similar way. The following theorem is the opposite of (WF$2'$).
\begin{theorem}
 Let $\pi\in\rm{Irr}(\UU_n\fq)$, $\ell(\pi)=(\ell_1,\ell_2,\cdots)$ be the descent index of $\pi$. If $\lambda\le \ell(\pi) $, then
  \[
 \rm{dim}\rm{Wh}_{\co_{\lambda}}(\pi)>0.
 \]

\end{theorem}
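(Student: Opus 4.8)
The plan is to prove the statement by induction on $n$, reducing it by means of Lemma \ref{ex3} to the construction of a single, carefully chosen irreducible constituent of the partial descent $\CQ_{\lambda_1}(\pi)=\rm{Wh}_{\co_{(\lambda_1,1,\dots,1)}}(\pi)$ of $\pi$. Write $\lambda=(\lambda_1,\dots,\lambda_k)$. If $\lambda=\ell(\pi)$ the assertion is Theorem \ref{am}(i), so I assume $\lambda\lneq\ell(\pi)$; then $1\le\lambda_1<n$. Lemma \ref{ex3} gives $\rm{dim}\,\rm{Wh}_{\co_\lambda}(\pi)=\rm{dim}\,\rm{Wh}_{\co_{(\lambda_2,\dots,\lambda_k)}}\!\bigl(\CQ_{\lambda_1}(\pi)\bigr)$, and since $\rm{Wh}_{\co_{(\lambda_2,\dots,\lambda_k)}}(-)$ is an additive functor taking values of non-negative dimension, it suffices to exhibit one irreducible $\sigma$ occurring in $\CQ_{\lambda_1}(\pi)$ with $(\lambda_2,\dots,\lambda_k)\le\ell(\sigma)$: the inductive hypothesis applied to $\sigma\in\rm{Irr}(\UU_{n-\lambda_1}\fq)$ (legitimate since $n-\lambda_1<n$) then yields $\rm{dim}\,\rm{Wh}_{\co_{(\lambda_2,\dots,\lambda_k)}}(\sigma)>0$, hence $\rm{dim}\,\rm{Wh}_{\co_\lambda}(\pi)>0$.

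To build $\sigma$ I would invoke the $2$-transversality criterion: by the definition of $m(\pi,\cdot)$ in Section \ref{sec5.1} together with Theorems \ref{ggp} and \ref{a1}, an irreducible representation of $\UU_{n-\lambda_1}\fq$ (or its contragredient) occurs in $\CQ_{\lambda_1}(\pi)$ as soon as its Lusztig data are $2$-transverse to those of $\pi$ componentwise; writing $\cal{L}_\pi([a])=\lambda[a]$, Corollary \ref{d1} moreover gives $\ell(\pi)_i=\sum_{[a]}\#[a]\cdot\lambda[a]^t_i$. Since $\Fq$ is large I may fix one eigenvalue class $[a_0]$ with $\#[a_0]=1$ not occurring in $\pi$, and define $\sigma$ by setting $\cal{L}_\sigma([a])=\lambda[a]^{-1}$ (first column removed) for the classes $[a]$ of $\pi$ and $\cal{L}_\sigma([a_0])=(1^{\,\ell(\pi)_1-\lambda_1})$ (this auxiliary class being empty when $\lambda_1=\ell(\pi)_1$, in which case $\sigma$ is just the first descent $\CD_{\lambda_1}(\pi)$ of Corollary \ref{d1}). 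Each $\lambda[a]^{-1}$ is $2$-transverse to $\lambda[a]$ and $(1^{\,\ell(\pi)_1-\lambda_1})$ is $2$-transverse to the empty partition, while $\sum_{[a]}\#[a]\,|\cal{L}_\sigma([a])|=\bigl(n-\ell(\pi)_1\bigr)+\bigl(\ell(\pi)_1-\lambda_1\bigr)=n-\lambda_1$, so $\sigma$ is a genuine irreducible representation of $\UU_{n-\lambda_1}\fq$ occurring in $\CQ_{\lambda_1}(\pi)$.

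It then remains to check $(\lambda_2,\dots,\lambda_k)\le\ell(\sigma)$. Using $(\lambda[a]^{-1})^t_i=\lambda[a]^t_{i+1}$ and $(1^{\,\ell(\pi)_1-\lambda_1})^t=(\ell(\pi)_1-\lambda_1)$, a direct computation gives $\ell(\sigma)_1=\ell(\pi)_2+\bigl(\ell(\pi)_1-\lambda_1\bigr)$ and $\ell(\sigma)_i=\ell(\pi)_{i+1}$ for $i\ge2$, so that $\sum_{i=1}^{M}\ell(\sigma)_i=\sum_{i=1}^{M+1}\ell(\pi)_i-\lambda_1$ for every $M\ge1$. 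On the other hand the hypothesis $\lambda\le\ell(\pi)$ in the parabolic ordering gives $\lambda_1+\sum_{i=1}^{M}\lambda_{i+1}\le\sum_{i=1}^{M+1}\ell(\pi)_i$, i.e. $\sum_{i=1}^{M}\lambda_{i+1}\le\sum_{i=1}^{M}\ell(\sigma)_i$, which is precisely $(\lambda_2,\dots,\lambda_k)\le\ell(\sigma)$. This closes the induction.

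The main obstacle I anticipate is organizational rather than conceptual. One must identify $\CQ_{\lambda_1}(\pi)$ with $\rm{Wh}_{\co_{(\lambda_1,1,\dots,1)}}(\pi)$ correctly in both the Bessel ($\lambda_1$ odd) and Fourier--Jacobi ($\lambda_1$ even) regimes of Section \ref{sec5}, and one must control the contragredient ambiguity in which of $\sigma$ or $\sigma^\vee$ actually occurs in $\CQ_{\lambda_1}(\pi)$; here one uses that the relevant duality on $\UU_n\fq$ merely permutes the eigenvalue classes $[a]$ while fixing each $\#[a]$ and the associated partition, so that $\ell(\sigma^\vee)=\ell(\sigma)$ and the choice above is harmless. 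The load-bearing inputs are Corollary \ref{d1} (the formula for $\ell(\pi)$ and the description of first descents) and the $2$-transversality criterion of Theorem \ref{ggp}: it is exactly their combination that makes the above $\sigma$ available and forces $(\lambda_2,\dots,\lambda_k)\le\ell(\sigma)$.
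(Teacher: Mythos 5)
Your argument is correct and is essentially the paper's own proof: your auxiliary representation $\sigma$, with Lusztig data $\lambda[a]^{-1}$ at the classes of $\pi$ and $(1^{\ell_1-\lambda_1})$ at a new singleton class $[a_0]$, is exactly the representation $\pi_b$ constructed in the paper, and both arguments then combine Lemma \ref{ex3}, the descent computation of Corollary \ref{d1} (equivalently Theorem \ref{ggp}) and induction on $n$ in the same way. The only differences are cosmetic: you anchor the induction with Theorem \ref{am}(i) instead of the paper's separate treatment of the generic case, and you write out explicitly the partial-sum inequality $\ell(\sigma)\ge(\lambda_2,\dots,\lambda_k)$ that the paper merely asserts.
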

\begin{proof}
Suppose that $\pi\in\cal{E}(G^F,s)$, and the image of $\pi$ under the Lusztig correspondence is
\[
\cal{L}_s(\pi)=\prod_{i=1}^k\pi{[a_i]},
\]
 where $\pi{[a_i]}$ is the unipotent representation corresponding to partition $\lambda[a_i]$ for each $[a_i]$.

 If $k=n$, then $\pi{[a_i]}$ is a character of $\UU_1\fq$. Therefore, for each $i$, $\pi{[a_i]}$ is a generic representation and so is $\pi$. There is nothing to prove.

 Assume that $k<n$. We prove the theorem by induction on $n$. By our assumption that the finite field $\Fq$ is sufficient, we can pick an element $b\in \Fq^\times$ different from $\{a_i\}$. Denote by $\pi':=\CD_{\ell_1}(\pi)\in \rm{Irr}(\UU_{n-\ell_1}\fq,s')$. Then the set of eigenvalues of the semisimple element $s'$ is a subset of $\{a_i\}$. Let
 \[
\cal{L}_{s'}(\pi')=\prod_{i=1}^k\pi'{[a_i]}.
\]
Consider the semisimple element
 \[
 s_b:=\rm{diag}(b,\cdots,b,s')\in \UU_{n-\lambda_1}\fq.
 \]
 Let $\pi_b\in \cal{E}(\UU_{n-\lambda_1}\fq,s_b)$ such that the image of $\pi$ under the Lusztig correspondence is
 \[
\pi[b]\otimes \prod_{i=1}^k\pi'{[a_i]}
 \]
 where $\pi[b]\cong\pi_{(1,1,\cdots,1)}\in \rm{Irr}(\UU_{\ell_1-\lambda_1}\fq)$ is the component corresponding to $[b]$. By Corollary \ref{d1}, we have
 \[
 \ell(\pi_b)=(\ell_2+\ell_1-\lambda_1,\ell_3,\ell_4,\cdots)\ge (\lambda_2,\lambda_3,\cdots).
 \]
By our induction assumption, we have
\[
 \rm{dim}\rm{Wh}_{\co_{(\lambda_2,\lambda_3,\cdots)}}(\pi_b)>0.
\]
According to Corollary \ref{d1}, we know that $\pi_b$ is an irreducible constitute of the generalized Whittaker model  $\rm{dim}\rm{Wh}_{\co_{(\lambda_1,1,1,\cdots,1)}}(\pi)$. Hence
 \[
 \begin{aligned}
 \rm{dim}\rm{Wh}_{\co_{\lambda}}(\pi)
 = \rm{dim}\rm{Wh}_{\co_{(\lambda_2,\lambda_3,\cdots)}}(\rm{Wh}_{\co_{(\lambda_1,1,1,\cdots,1)}}(\pi))
 \ge \rm{dim}\rm{Wh}_{\co_{(\lambda_2,\lambda_3,\cdots)}}(\pi_b)
 >\ 0.
 \end{aligned}
 \]
 \end{proof}

 \section{Calculation of Generalized Gelfand-Graev Representations}\label{sec8}

 Rainbolt studies the GGGRs of $\UU_3\fq$ in \cite{Ra}.  In the general case, little is known. In this section, we induce an algorithm to calculate the generalized Gelfand-Graev representations of unitary groups. This algorithm is based on our calculation in section \ref{sec5.2}. Let $\co_\lambda=\co_{(\lambda_1,\cdots,\lambda_k)}$ be a nilpotent orbit in $\UU_n\fq$, and $\Gamma_{\lambda}$ be the generalized Gelfand-Graev representation corresponding to the orbit $\co_{\lambda}$. To calculate the generalized Gelfand-Graev representation $\Gamma_{\lambda}$, we only need to calculate
 \[
\rm{dim}\rm{Hom}_{\UU_n\fq}(\pi, \Gamma_\lambda)=\rm{dim}\rm{Wh}_{\co_{\lambda}}(\pi).
 \]
 By Lemma \ref{ex3}, we know that
 \[
 \begin{aligned}
\ & \rm{dim}\rm{Wh}_{\co_{\lambda}}(\pi)\\
=\ &\rm{dim}\rm{Wh}_{\co_{(\lambda_2,\lambda_3,\cdots})}(\rm{Wh}_{\co_{(\lambda_1,1,\cdots,1)}}(\pi))\\
 =\ &\rm{dim}\rm{Wh}_{\co_{(\lambda_3,\lambda_4,\cdots)}}(\rm{Wh}_{\co_{(\lambda_2,1,\cdots,1)}}(\rm{Wh}_{(\co_{\lambda_1,1,\cdots,1)}}(\pi)))\\
  =\ &\rm{dim} \rm{Wh}_{\co_{(\lambda_k,1,\cdots,1)}}(\rm{Wh}_{\co_{(\lambda_{k-1},1,\cdots,1)}}(\rm{Wh}_{\co_{(\lambda_{k-2},1,\cdots,1)}}(\cdots(\rm{Wh}_{\co_{(\lambda_1,1,\cdots,1)}}(\pi)))\\
 \end{aligned}
 \]
 So we can reduce the calculation to this kind of generalized Whittaker model
 \[
 \rm{dim}\rm{Wh}_{\co_{\ell,1,\cdots,1}}(\pi).
 \]
  Let $\pi'\in \rm{Irr}(\UU_{n-\ell}\fq)$. In Section \ref{sec5}, we give a explicit description of
\[
m(\pi,\pi'):=\dim \rm{Hom}_{\rm{U}_{n-\ell}(\Fq)}(\rm{Wh}_{\co_{\ell,1,\cdots,1}}(\pi),\pi').
\]
So, in principle, one can calculate the dimension
of arbitrary generalized Whittaker models. However, in general, generalized Whittaker models contain
a large number of irreducible constituents, and the calculation would be very complicated. For example, we will use this approach to calculate the $\UU_4\fq$ case.

\subsection{Representations of $\UU_4\fq$}

We divide ${\rm {Irr}}(\UU_4\fq)$ into the following 11 cases.
  \begin{itemize}
\item Case ($A$): There are $[a_1]\ne [a_2]\ne [a_3]\ne [a_4]$ such that $\#[a_i]=1$ and $|\cal{L}_\pi([a_i])|=1$;
\item Case ($B$): There are $[a_1]\ne [a_2]\ne [a_3]$ such that $\#[a_i]=1$, $|\cal{L}_\pi([a_1])|=|\cal{L}_\pi([a_2])|=1$, and $|\cal{L}_\pi([a_3])|=2$;
\item Case ($C$): There are $[a_1]\ne [a_2]\ne [a_3]$ such that $\#[a_1]=2$, $\#[a_2]=\#[a_3]=1$, and $|\cal{L}_\pi([a_i])|=1$;
\item Case ($D$): There are $[a_1]\ne [a_2]$ such that $\#[a_i]=1$, and $|\cal{L}_\pi([a_1])|=1$, and $|\cal{L}_\pi([a_2])|=3$;
\item Case ($E$): There are $[a_1]\ne [a_2]$ such that $\#[a_i]=1$, and $|\cal{L}_\pi([a_i])|=2$;
\item Case ($F$): There are $[a_1]\ne [a_2]$ such that $\#[a_1]=2$, $\#[a_2]=1$, $|\cal{L}_\pi([a_1])|=1$, and $|\cal{L}_\pi([a_2])|=2$;
\item Case ($G$): There are $[a_1]\ne [a_2]$ such that $\#[a_1]=\#[a_2]=2$, and $|\cal{L}_\pi([a_i])|=1$;
\item Case ($H$): There are $[a_1]\ne [a_2]$ such that $\#[a_1]=3$, $\#[a_2]=1$, $|\cal{L}_\pi([a_i])|=1$;
\item Case ($I$): There are $[a_1]$ such that $\#[a_1]=1$ and $|\cal{L}_\pi([a_i])|=4$;
\item Case ($J$): There are $[a_1]$ such that $\#[a_1]=2$ and $|\cal{L}_\pi([a_i])|=2$;
\item Case ($K$): There are $[a_1]$ such that $\#[a_1]=4$ and $|\cal{L}_\pi([a_i])|=1$.
\end{itemize}
Let
\[
R(X) :=\{\pi \textrm{ is an irreducible representation in the Case (X)} \}
\]
and
\[
R(X_{[a_1]\to \lambda,[a_2]\to \lambda',\cdots}):=\{\pi\in R(X)\ | \ \cal{L}_\pi([a_1])=\lambda, \cal{L}_\pi([a_2])=\lambda',\cdots\}.
\]
Write
\[
\Pi(X):=\bigoplus_{\pi\in R(X)}\pi
\]
and
\[
\Pi(X_{[a_1]\to \lambda,[a_2]\to \lambda',\cdots}):=\bigoplus_{\pi\in R((X_{[a_1]\to \lambda,[a_2]\to \lambda',\cdots})}\pi.
\]
Let $\cal{X}$ be the collection of symbols $X_{[a_1]\to \lambda,[a_2]\to \lambda',\cdots}$ such that $R(X_{[a_1]\to \lambda,[a_2]\to \lambda',\cdots})\ne \emptyset$.

There are 5 nilpotent orbits in $\UU_4\fq$: $\co_{(4)}$, $\co_{(3,1)}$, $\co_{(2,2)}$, $\co_{(2,1,1)}$, and $\co_{(1,1,1,1)}$. The generalized Gelfand-Graev representation $\Gamma_{(4)}$ is the usual one. And the generalized Gelfand-Graev representation $\Gamma_{(1,1,1,1)}$ is the regular representation of $\UU_4\fq$.

\subsection{Calculation of $\Gamma_{(3,1)}$}
Consider $\Gamma_{3,1}$. Let $\pi'$ be the irreducible representation of $\UU_1\fq$ such that $\Lambda_{\pi'}([b])=(1)$. The representation $\pi'$ is actually a character of $\UU_1\fq$, and $\rm{dim}\pi'=1$
According to the calculation in Section \ref{sec5}, we have the following equations:
  \begin{itemize}
\item If $\pi\in R(A)$, we have
\[
m(\pi, \pi')=\left\{
\begin{array}{ll}
1, &  \textrm{if $[b]\notin\{[a_1], [a_2], [a_3], [a_4]\}$},\\
0, & \textrm{otherwise;}
\end{array}\right.
\]
\item If $\pi\in R(B)$, we have
\[
m(\pi, \pi')=\left\{
\begin{array}{ll}
1, &  \textrm{if $[b]\notin\{[a_1],[a_2],[a_3]\}$, and and $\cal{L}_\pi([a_3])=(1,1)$},\\
1, &  \textrm{if $[b]=[a_3]$, and $\cal{L}_\pi([a_3])=(2)$},\\
0, & \textrm{otherwise;}
\end{array}\right.
\]
\item
If $\pi\in R(C)$, we have
\[
m(\pi, \pi')=\left\{
\begin{array}{ll}
1, &  \textrm{if $[b]\notin\{[a_2],[a_3]\}$},\\
0, & \textrm{otherwise;}
\end{array}\right.
\]
\item If $\pi\in R(D)$, we have
\[
m(\pi, \pi')=\left\{
\begin{array}{ll}
1, &  \textrm{if $[b]\notin\{[a_1],[a_2]\}$, and $\cal{L}_\pi([a_2])=(1,1,1)$},\\
1, &  \textrm{if $[b]=[a_2]$, and $\cal{L}_\pi([a_2])=(2,1)$},\\
0, & \textrm{otherwise;}
\end{array}\right.
\]
\item If $\pi\in R(E)$, we have
\[
m(\pi, \pi')=\left\{
\begin{array}{ll}
1, &  \textrm{if $[b]\notin\{[a_1],[a_2]\}$, and $\cal{L}_\pi([a_1])=\cal{L}_\pi([a_2])=(1,1)$},\\
1, &  \textrm{if $[b]=[a_i]$, and $\cal{L}_\pi([a_i])=(2)$, and $\cal{L}_\pi([a_{i'}])=(1,1)$},\\
0, & \textrm{otherwise;}
\end{array}\right.
\]
\item If $\pi\in R(F)$, we have
\[
m(\pi, \pi')=\left\{
\begin{array}{ll}
1, &  \textrm{if $[b]\ne[a_2]$, and $\cal{L}_\pi([a_2])=(1,1)$},\\
1, &  \textrm{if $[b]=[a_2]$, and $\cal{L}_\pi([a_2])=(2)$},\\
0, & \textrm{otherwise;}
\end{array}\right.
\]
\item If $\pi\in R(G)$, then $m(\pi, \pi')\equiv1$;
\item If $\pi\in R(H)$, we have
\[
m(\pi, \pi')=\left\{
\begin{array}{ll}
1, &  \textrm{if $[b]\ne[a_2]$},\\
0, & \textrm{otherwise;}
\end{array}\right.
\]
\item If $\pi\in R(I)$, we have
\[
m(\pi, \pi')=\left\{
\begin{array}{ll}
1, &  \textrm{if $[b]\ne[a_1]$, and $\cal{L}_\pi[a_1]=(1,1,1,1)$},\\
1, &  \textrm{if $[b]=[a_1]$, and $\cal{L}_\pi[a_1]=(2,1,1)$},\\
0, & \textrm{otherwise;}
\end{array}\right.
\]
\item If $\pi\in R(J)$, \[
m(\pi, \pi')=\left\{
\begin{array}{ll}
1, &  \textrm{if $\cal{L}_\pi[a_1]=(1,1)$},\\
0, & \textrm{otherwise;}
\end{array}\right.
\]
\item If $\pi\in R(K)$, then $m(\pi, \pi')\equiv1$.
\end{itemize}

Let
\[
f_{(3,1)}(X_{[a_1]\to \lambda,[a_2]\to \lambda',\cdots}):=\bigoplus_{\pi'\in\rm{Irr}(\UU_1\fq)}\rm{dim}\pi'\cdot m(\pi,\pi')
\]
where $\pi\in R(X_{[a_1]\to \lambda,[a_2]\to \lambda',\cdots})$. By the above calculation, the function $f$ is well-defined. We have
\[
\begin{matrix}
f_{(3,1)}(A)&= &q-5;\\
f_{(3,1)}(B_{[a_3]\to(1,1)]})&=&q-4;\\
f_{(3,1)}(B_{[a_3]\to(2)]})&=&1;&\\
f_{(3,1)}(C)&=&q-3;\\
f_{(3,1)}(D_{[a_2]\to (1,1,1)})&=&q-3;\\
f_{(3,1)}(D_{[a_2]\to (2,1)})&=&1;\\
f_{(3,1)}(D_{[a_2]\to (3)})&=&0;\\
f_{(3,1)}(E_{[a_1]\to(1,1),[a_2]\to(1,1)})&=&q-3;\\
f_{(3,1)}(E_{[a_1]\to(2),[a_2]\to(1,1)})&=&1;\\
f_{(3,1)}(E_{[a_1]\to(2),[a_2]\to(2)})&=&0;\\
f_{(3,1)}(F_{[a_2]\to(1,1)})&=&q-2;\\
f_{(3,1)}(F_{[a_2]\to(2)})&=&1;\\
f_{(3,1)}(G)&=&(q-1);\\
f_{(3,1)}(H)&=&q-2;\\
f_{(3,1)}(I_{[a_1]\to(1,1,1,1)})&=&q-2&;\\
f_{(3,1)}(I_{[a_1]\to(2,1,1)})&=&1;\\
f_{(3,1)}(I_{[a_1]\to(2,2)})&=&0;\\
f_{(3,1)}(I_{[a_1]\to(3,1)})&=&0;\\
f_{(3,1)}(I_{[a_1]\to(4)})&=&0;\\
f_{(3,1)}(J_{[a_1]\to(1,1)})&=&q-1;\\
f_{(3,1)}(J_{[a_1]\to(2)})&=&0;\\
f_{(3,1)}(K)&=&q-1,\\
\end{matrix}
\]
and
\[
\begin{aligned}
\Gamma_{(3,1)}=\bigoplus_{X_{[a_1]\to \lambda,[a_2]\to \lambda',\cdots}\in\cal{X}}f_{(3,1)}(X_{[a_1]\to \lambda,[a_2]\to \lambda',\cdots})\Pi(X_{[a_1]\to \lambda,[a_2]\to \lambda',\cdots}).
\end{aligned}
\]

\subsection{Calculation of $\Gamma_{(2,1,1)}$}
The generalized Whittaker model $\rm{Wh}_{(2,1,1)}(\pi)$ can be regard as a representation of $\UU_2\fq$.
We classify representations of $\UU_2$ as follows. Let $\pi'\in \rm{Irr}(\UU_2\fq)$.
  \begin{itemize}
\item Case ($A'$): There is $[b]$ such that $\#[b]=1$, and $\Lambda_{\pi'}([b])=(2)$;
\item Case ($B'$): There is $[b]$ such that $\#[b]=1$, and $\Lambda_{\pi'}([b])=(1,1)$;
\item Case ($C'$): There is $[b_1]$ and $[b_2]$ such that $\#[b_1]=[b_2]=1$;
\item Case ($D'$): There is $[b]$ such that $\#[b]=2$.
\end{itemize}
We define $R(X')$ and $\Pi(X')$ similarly. Then $|R(A')|=|R(B')|=q-1$, $|R(C')|=(q-1)(q-2)$, and $|R(D)|=q^2-1$.

Let $\pi'\in R(A')$. Then $\rm{dim}\pi'=1$.
According to Theorem \ref{ggp}, we have the following equations:
  \begin{itemize}
\item If $\pi\in R(A)$, we have
\[
m(\pi, \pi')=\left\{
\begin{array}{ll}
1, &  \textrm{if $[b]=[a_i]$},\\
0, & \textrm{otherwise;}
\end{array}\right.
\]
\item If $\pi\in R(B)$, we have
\[
m(\pi, \pi')=\left\{
\begin{array}{ll}
1, &  \textrm{if $[b]\in\{[a_1],[a_2]\}$ },\\
1, &  \textrm{if $[b]=[a_3]$ and $\cal{L}_\pi[a_3]=(1,1)$},\\
0, & \textrm{otherwise;}
\end{array}\right.
\]
\item
If $\pi\in R(C)$, we have
\[
m(\pi, \pi')=\left\{
\begin{array}{ll}
1, &  \textrm{if $[b]\in\{[a_2],[a_3]\}$},\\
0, & \textrm{otherwise;}
\end{array}\right.
\]
\item If $\pi\in R(D)$, we have
\[
m(\pi, \pi')=\left\{
\begin{array}{ll}
1, &  \textrm{if $[b]=[a_1]$, and $\cal{L}_\pi[a_2]\in\{(1,1,1)$}\\
1, &  \textrm{if $[b]=[a_3]$, and $\cal{L}_\pi[a_2]\in\{(1,1,1),(3)\}$},\\
0, & \textrm{otherwise;}
\end{array}\right.
\]
\item If $\pi\in R(E)$, we have
\[
m(\pi, \pi')=\left\{
\begin{array}{ll}
1, &  \textrm{if $[b]\in\{[a_1],[a_2]\}$, and $\cal{L}_\pi([a_1])=\cal{L}_\pi([a_2])=(1,1)$},\\
0, & \textrm{otherwise;}
\end{array}\right.
\]
\item If $\pi\in R(F)$, we have
\[
m(\pi, \pi')=\left\{
\begin{array}{ll}
1, &  \textrm{if $[b]=[a_2]$, and $\cal{L}_\pi([a_1])=(1,1)$},\\
0, & \textrm{otherwise;}
\end{array}\right.
\]
\item If $\pi\in R(G)$, then $m(\pi, \pi')\equiv0$;
\item If $\pi\in R(H)$, we have
\[
m(\pi, \pi')=\left\{
\begin{array}{ll}
1, &  \textrm{if $[b]=[a_2]$},\\
0, & \textrm{otherwise;}
\end{array}\right.
\]
\item If $\pi\in R(I)$, we have
\[
m(\pi, \pi')=\left\{
\begin{array}{ll}
1, &  \textrm{if $[b]= [a_1]$, and $\cal{L}_\pi[a_1]\in\{(3,1),(1,1,1,1)\}$},\\
0, & \textrm{otherwise;}
\end{array}\right.
\]
\item If $\pi\in R(J)$, then $m(\pi, \pi')\equiv0$;
\item If $\pi\in R(K)$, then $m(\pi, \pi')\equiv0$.
\end{itemize}

Let $\pi'\in R(B')$. Then $\rm{dim}\pi'=q$.
According to Theorem \ref{ggp}, we have the following equations:
  \begin{itemize}
\item If $\pi\in R(A)$, we have
\[
m(\pi, \pi')=\left\{
\begin{array}{ll}
1, &  \textrm{if $[b]\notin\{[a_1],[a_2],[a_3],[a_4]\}$},\\
0, & \textrm{otherwise;}
\end{array}\right.
\]
\item If $\pi\in R(B)$, we have
\[
m(\pi, \pi')=\left\{
\begin{array}{ll}
1, &  \textrm{if $[b]\notin\{[a_1],[a_2],[a_3]\}$, and and $\cal{L}_\pi([a_3])=(1,1)$},\\
1, &  \textrm{if $[b]=[a_3]$},\\
0, & \textrm{otherwise;}
\end{array}\right.
\]
\item
If $\pi\in R(C)$, we have
\[
m(\pi, \pi')=\left\{
\begin{array}{ll}
1, &  \textrm{if $[b]\notin\{[a_2],[a_3]\}$},\\
0, & \textrm{otherwise;}
\end{array}\right.
\]
\item If $\pi\in R(D)$, we have
\[
m(\pi, \pi')=\left\{
\begin{array}{ll}
1, &  \textrm{if $[b]\ne[a_1]$, and $\cal{L}_\pi([a_2])=(1,1,1)$},\\
0, & \textrm{otherwise;}
\end{array}\right.
\]
\item If $\pi\in R(E)$, we have
\[
m(\pi, \pi')=\left\{
\begin{array}{ll}
1, &  \textrm{if $[b]\notin\{[a_1],[a_2]\}$, and $\cal{L}_\pi([a_1])=\cal{L}_\pi([a_2])=(1,1)$},\\
1, &  \textrm{if $[b]=[a_i]$, and $\cal{L}_\pi([a_i'])=(1,1)$},\\
0, & \textrm{otherwise;}
\end{array}\right.
\]
\item If $\pi\in R(F)$, we have
\[
m(\pi, \pi')=\left\{
\begin{array}{ll}
1, &  \textrm{if $[b]\ne[a_2]$, and $\cal{L}_\pi([a_2])=(1,1)$},\\
1, &  \textrm{if $[b]=[a_2]$},\\
0, & \textrm{otherwise;}
\end{array}\right.
\]
\item If $\pi\in R(G)$, then $m(\pi, \pi')\equiv1$;
\item If $\pi\in R(H)$, we have
\[
m(\pi, \pi')=\left\{
\begin{array}{ll}
1, &  \textrm{if $[b]\ne [a_2]$},\\
0, & \textrm{otherwise;}
\end{array}\right.
\]
\item If $\pi\in R(I)$, we have
\[
m(\pi, \pi')=\left\{
\begin{array}{ll}
1, &  \textrm{if $\cal{L}_\pi[a_1]=(1,1,1,1)$},\\
1, &  \textrm{if $[b]= [a_1]$, and $\cal{L}_\pi[a_1]=(2,2)$},\\
0, & \textrm{otherwise;}
\end{array}\right.
\]
\item If $\pi\in R(J)$, we have
\[
m(\pi, \pi')=\left\{
\begin{array}{ll}
1, &  \textrm{if $\cal{L}_\pi[a_1]=(1,1)$},\\
0, & \textrm{otherwise;}
\end{array}\right.
\]
\item If $\pi\in R(K)$, then $m(\pi, \pi')\equiv1$.
\end{itemize}

Let $\pi'\in R(C')$. Then $\rm{dim}\pi'=q-1$.
According to Theorem \ref{ggp}, we have the following equations:
  \begin{itemize}
\item If $\pi\in R(A)$, we have
\[
m(\pi, \pi')=\left\{
\begin{array}{ll}
1, &  \textrm{if $[b_1],[b_2]\notin\{[a_1],[a_2],[a_3],[a_4]\}$},\\
0, & \textrm{otherwise;}
\end{array}\right.
\]
\item If $\pi\in R(B)$, we have
\[
m(\pi, \pi')=\left\{
\begin{array}{ll}
1, &  \textrm{if $[b_1],[b_2]\notin\{[a_1],[a_2],[a_3]\}$,  and $\cal{L}_\pi([a_3])=(1,1)$},\\
1, &  \textrm{if $[b_1]\notin\{[a_1],[a_2],[a_3]\}$, and $[b_2]=[a_3]$, and $\cal{L}_\pi([a_3])=(2)$},\\
0, & \textrm{otherwise;}
\end{array}\right.
\]
\item
If $\pi\in R(C)$, we have
\[
m(\pi, \pi')=\left\{
\begin{array}{ll}
1, &  \textrm{if $[b_1],[b_2]\notin\{[a_2],[a_3]\}$},\\
0, & \textrm{otherwise;}
\end{array}\right.
\]
\item If $\pi\in R(D)$, we have
\[
m(\pi, \pi')=\left\{
\begin{array}{ll}
1, &  \textrm{if $[b_1],[b_2]\notin\{[a_1],[a_2]\}$, and $\cal{L}_\pi([a_2])=(1,1,1)$},\\
1, &  \textrm{if $[b_1]\notin\{[a_1],[a_2]\}$, and $[b_2]=[a_2]$, and $\cal{L}_\pi([a_2])=(2,1)$},\\
0, & \textrm{otherwise;}
\end{array}\right.
\]
\item If $\pi\in R(E)$, we have
\[
m(\pi, \pi')=\left\{
\begin{array}{ll}
1, &  \textrm{if $[b_1],[b_2]\notin\{[a_1],[a_2]\}$, and $\cal{L}_\pi([a_1])=\cal{L}_\pi([a_2])=(1,1)$},\\
1, &  \textrm{if $[b_1]\notin\{[a_1],[a_2]\}$, and $[b_2]=[a_i]$, and $\cal{L}_\pi([a_i])=(2)$, and $\cal{L}_\pi([a_i'])=(1,1)$},\\
1, &  \textrm{if $[b_1]=[a_1]$, $[b_2]=[a_2]$ and $\cal{L}_\pi([a_1])=\cal{L}_\pi([a_2]) =(2)$},\\
0, & \textrm{otherwise;}
\end{array}\right.
\]
\item If $\pi\in R(F)$, we have
\[
m(\pi, \pi')=\left\{
\begin{array}{ll}
1, &  \textrm{if $[b_1]\ne [a_2]$, and $[b_2]\ne[a_2]$, and $\cal{L}_\pi([a_2])=(1,1)$},\\
1, &  \textrm{if $[b_1]\ne[a_2]$, and $[b_2]=[a_2]$, and $\cal{L}_\pi([a_2])=(2)$},\\
0, & \textrm{otherwise;}
\end{array}\right.
\]
\item If $\pi\in R(G)$, then $m(\pi, \pi')\equiv1$;
\item If $\pi\in R(H)$, we have
\[
m(\pi, \pi')=\left\{
\begin{array}{ll}
1, &  \textrm{if $[b_1]\ne[a_2]$, and $[b_2]\ne [a_2]$},\\
0, & \textrm{otherwise;}
\end{array}\right.
\]
\item If $\pi\in R(I)$, we have
\[
m(\pi, \pi')=\left\{
\begin{array}{ll}
1, &  \textrm{if $[b_1]\ne[a_1]$, $[b_2]\ne[a_1]$, and $\cal{L}_\pi[a_1]=(1,1,1,1)$},\\
1, &  \textrm{if $[b_1]\ne [a_1]$, $[b_2]=[a_1]$, and $\cal{L}_\pi[a_1]=(2,1,1)$},\\
0, & \textrm{otherwise;}
\end{array}\right.
\]
\item If $\pi\in R(J)$, we have
\[
m(\pi, \pi')=\left\{
\begin{array}{ll}
1, &  \textrm{if $\cal{L}_\pi[a_1]=(1,1)$},\\
0, & \textrm{otherwise;}
\end{array}\right.
\]
\item If $\pi\in R(K)$, then $m(\pi, \pi')\equiv1$.
\end{itemize}

Let $\pi'\in R(D')$. Then $\rm{dim}\pi'=q+1$.
According to Theorem \ref{ggp}, we have the following equations:
  \begin{itemize}
\item If $\pi\in R(A)$, then
$m(\pi, \pi')\equiv1$;
\item If $\pi\in R(B)$, we have
\[
m(\pi, \pi')=\left\{
\begin{array}{ll}
1, &  \textrm{if $\cal{L}_\pi([a_3])=(1,1)$}\\
0, & \textrm{otherwise;}
\end{array}\right.
\]
\item
If $\pi\in R(C)$, we have
\[
m(\pi, \pi')=\left\{
\begin{array}{ll}
1, &  \textrm{if $[b]\ne [a_1]$},\\
0, & \textrm{otherwise;}
\end{array}\right.
\]
\item If $\pi\in R(D)$, we have
\[
m(\pi, \pi')=\left\{
\begin{array}{ll}
1, &  \textrm{if $\cal{L}_\pi([a_2])=(1,1,1)$},\\
0, & \textrm{otherwise;}
\end{array}\right.
\]
\item If $\pi\in R(E)$, we have
\[
m(\pi, \pi')=\left\{
\begin{array}{ll}
1, &  \textrm{if $\cal{L}_\pi([a_1])=\cal{L}_\pi([a_2])=(1,1)$},\\
0, & \textrm{otherwise;}
\end{array}\right.
\]
\item If $\pi\in R(F)$, we have
\[
m(\pi, \pi')=\left\{
\begin{array}{ll}
1, &  \textrm{if $[b]\ne [a_1]$, and $\cal{L}_\pi([a_2])=(1,1)$},\\
0, & \textrm{otherwise;}
\end{array}\right.
\]
\item If $\pi\in R(G)$,
we have
\[
m(\pi, \pi')=\left\{
\begin{array}{ll}
1, &  \textrm{if $[b]\notin\{[a_1],[a_2]\}$},\\
0, & \textrm{otherwise;}
\end{array}\right.
\]
\item If $\pi\in R(H)$, then $m(\pi, \pi')\equiv1$;
\item If $\pi\in R(I)$, we have
\[
m(\pi, \pi')=\left\{
\begin{array}{ll}
1, &  \textrm{if $\cal{L}_\pi[a_1]=(1,1,1,1)$},\\
0, & \textrm{otherwise;}
\end{array}\right.
\]
\item If $\pi\in R(J)$, we have
\[
m(\pi, \pi')=\left\{
\begin{array}{ll}
1, &  \textrm{if $[b]\ne[a_1]$ and $\cal{L}_\pi([a_1])=(1,1)$},\\
1, &  \textrm{if $[b]=[a_1]$ and $\cal{L}_\pi([a_1])=(2)$},\\
0, & \textrm{otherwise;}
\end{array}\right.
\]
\item If $\pi\in R(K)$, then $m(\pi, \pi')\equiv1$.
\end{itemize}

Let
\[
f_{(2,1,1)}(X_{[a_1]\to \lambda,[a_2]\to \lambda',\cdots}):=\bigoplus_{\pi'\in\rm{Irr}(\UU_2\fq)}\rm{dim}\pi'\cdot m(\pi,\pi')
\]
where $\pi\in R(X_{[a_1]\to \lambda,[a_2]\to \lambda',\cdots})$. By the above calculation, the function $f$ is well-defined. We have
\[
\begin{matrix}
f_{(2,1,1)}(A)&= &4&+&q(q-5)&+&(q-1)(q-5)(q-6)&+&(q+1)(q^2-1);\\
f_{(2,1,1)}(B_{[a_3]\to(1,1)]})&=&3&+&q(q-3)&+&(q-1)(q-4)(q-5)&+&(q+1)(q^2-1);\\
f_{(2,1,1)}(B_{[a_3]\to(2)]})&=&2&+&q&+&(q-1)(q-4);&\\
f_{(2,1,1)}(C)&=&2&+&q(q-3)&+&(q-1)(q-3)(q-4)&+&(q+1)(q^2-2);\\
f_{(2,1,1)}(D_{[a_2]\to (1,1,1)})&=&2&+&q(q-1)&+&(q-1)(q-3)(q-4)&+&(q+1)(q^2-1);\\
f_{(2,1,1)}(D_{[a_2]\to (2,1)})&=&&&&&(q-1)(q-3);\\
f_{(2,1,1)}(D_{[a_2]\to (3)})&=&1;\\
f_{(2,1,1)}(E_{[a_1]\to(1,1),[a_2]\to(1,1)})&=&2&+&q(q-1)&+&(q-1)(q-3)(q-4)&+&(q+1)(q^2-1);\\
f_{(2,1,1)}(E_{[a_1]\to(2),[a_2]\to(1,1)})&=&&&q&+&2(q-1)(q-3);\\
f_{(2,1,1)}(E_{[a_1]\to(2),[a_2]\to(2)})&=&&&&&q-1;\\
f_{(2,1,1)}(F_{[a_2]\to(1,1)})&=&1&+&q(q-1)&+&(q-1)(q-2)(q-3)&+&(q+1)(q^2-2);\\
f_{(2,1,1)}(F_{[a_2]\to(2)})&=&&&q&+&(q-1)(q-2);\\
f_{(2,1,1)}(G)&=&&&q(q-1)&+&(q-1)^2(q-2)&+&(q+1)(q^2-3)&;\\
f_{(2,1,1)}(H)&=&1&+&q(q-2)&+&(q-1)(q-2)(q-3)&+&(q+1)(q^2-1);\\
f_{(2,1,1)}(I_{[a_1]\to(1,1,1,1)})&=&1&+&q(q-1)&+&(q-1)(q-2)(q-3)&+&(q+1)(q^2-1)&;\\
f_{(2,1,1)}(I_{[a_1]\to(2,1,1)})&=&&&&&(q-1)(q-2);\\
f_{(2,1,1)}(I_{[a_1]\to(2,2)})&=&&&q;\\
f_{(2,1,1)}(I_{[a_1]\to(3,1)})&=&1;\\
f_{(2,1,1)}(I_{[a_1]\to(4)})&=&0;\\
f_{(2,1,1)}(J_{[a_1]\to(1,1)})&=&&&q(q-1)&+&(q-1)^2(q-2)&+&(q+1)(q^2-2);\\
f_{(2,1,1)}(J_{[a_1]\to(2)})&=&&&&&&&q+1;\\
f_{(2,1,1)}(K)&=&&&q(q-1)&+&(q-1)^2(q-2)&+&(q+1)(q^2-1),\\
\end{matrix}
\]
and
\[
\begin{aligned}
\Gamma_{(2,1,1)}=\bigoplus_{X_{[a_1]\to \lambda,[a_2]\to \lambda',\cdots}\in\cal{X}}f_{(2,1,1)}(X_{[a_1]\to \lambda,[a_2]\to \lambda',\cdots})\Pi(X_{[a_1]\to \lambda,[a_2]\to \lambda',\cdots}).
\end{aligned}
\]

\subsection{Calculation of $\Gamma_{(2,2)}$}
Recall that for $\pi\in \rm{Irr}(\UU_4\fq)$, we have
\[
\rm{dim}\rm{Wh}_{\co_{(2,2)}}(\pi)=\rm{dim}\rm{Wh}_{\co_{(2)}}(\rm{Wh}_{\co_{(2,1,1)}}(\pi)).
\]
Note that for $\pi'\in \rm{Irr}(\UU_2\fq)$, we have
\[
\rm{dim}\rm{Wh}_{\co_{(2)}}(\pi')=\left\{
\begin{array}{ll}
1, &  \textrm{if $\pi'$ is generic},\\
0, & \textrm{otherwise;}
\end{array}\right.
\]
The description of $\Gamma_{(2,2)}$ immediately follows from the description of $\Gamma_{(2,1,1)}$. We define the function $f_{(2,2)}$ in a similar way. We get $f_{(2,2)}(X_{[a_1]\to \lambda,[a_2]\to \lambda',\cdots})$ from $f_{(2,1,1)}(X_{[a_1]\to \lambda,[a_2]\to \lambda',\cdots})$ by removing its first term and the first factor of the remaining terms. Indeed, the first term corresponds to non-generic representations, and the first factors remaining terms are the dimension of the corresponding generic representations. Then we have
\[
\begin{matrix}
f_{(2,2)}(A)&= &&&q-5&+&(q-5)(q-6)&+&q^2-1;\\
f_{(2,2)}(B_{[a_3]\to(1,1)]})&=&&&q-3&+&(q-4)(q-5)&+&q^2-1;\\
f_{(2,2)}(B_{[a_3]\to(2)]})&=&&&1&+&q-4;&\\
f_{(2,2)}(C)&=&&&q-3&+&(q-3)(q-4)&+&q^2-2;\\
f_{(2,2)}(D_{[a_2]\to (1,1,1)})&=&&&q-1&+&(q-3)(q-4)&+&q^2-1;\\
f_{(2,2)}(D_{[a_2]\to (2,1)})&=&&&&&q-3;\\
f_{(2,2)}(D_{[a_2]\to (3)})&=&&&0;\\
f_{(2,2)}(E_{[a_1]\to(1,1),[a_2]\to(1,1)})&=&&&q-1&+&(q-3)(q-4)&+&q^2-1;\\
f_{(2,2)}(E_{[a_1]\to(2),[a_2]\to(1,1)})&=&&&1&+&2(q-3);\\
f_{(2,2)}(E_{[a_1]\to(2),[a_2]\to(2)})&=&&&&&1;\\
f_{(2,2)}(F_{[a_2]\to(1,1)})&=&&&q-1&+&(q-2)(q-3)&+&q^2-2;\\
f_{(2,2)}(F_{[a_2]\to(2)})&=&&&1&+&q-2;\\
f_{(2,2)}(G)&=&&&q-1&+&(q-1)(q-2)&+&q^2-3;\\
f_{(2,2)}(H)&=&&&q-2&+&(q-2)(q-3)&+&q^2-1;\\
f_{(2,2)}(I_{[a_1]\to(1,1,1,1)})&=&&&q-1&+&(q-2)(q-3)&+&q^2-1;\\
f_{(2,2)}(I_{[a_1]\to(2,1,1)})&=&&&&&q-2;\\
f_{(2,2)}(I_{[a_1]\to(2,2)})&=&&&1;\\
f_{(2,2)}(I_{[a_1]\to(3,1)})&=&&&0;\\
f_{(2,2)}(I_{[a_1]\to(4)})&=&&&0;\\
f_{(2,2)}(J_{[a_1]\to(1,1)})&=&&&q-1&+&(q-1)(q-2)&+&q^2-2;\\
f_{(2,2)}(J_{[a_1]\to(2)})&=&&&0;\\
f_{(2,2)}(K)&=&&&q-1&+&(q-1)(q-2)&+&q^2-1,\\
\end{matrix}
\]
and
\[
\begin{aligned}
\Gamma_{(2,2))}=\bigoplus_{X_{[a_1]\to \lambda,[a_2]\to \lambda',\cdots}\in\cal{X}}f_{(2,2)}(X_{[a_1]\to \lambda,[a_2]\to \lambda',\cdots})\Pi(X_{[a_1]\to \lambda,[a_2]\to \lambda',\cdots}).
\end{aligned}
\]

\end{document}